\vfuzz2pt 
\hfuzz2pt 


\documentclass[11pt,a4paper,twoside]{article}

\setlength{\oddsidemargin}{0cm} \setlength{\evensidemargin}{0cm}
\setlength{\textwidth}{16truecm} \setlength{\textheight}{25truecm}
\vfuzz2pt 
\hfuzz2pt 

\usepackage{amsfonts}
\usepackage{mathrsfs}
\usepackage{amsthm}
\usepackage{amsmath}%
\usepackage{amssymb}%
\usepackage{fancyhdr}%
\usepackage{ifthen}%
\usepackage{calc}%
\usepackage{lastpage}%
\usepackage{array}%
\usepackage{anysize}%
\usepackage{cases}

\makeatletter
\renewcommand\@biblabel[1]{${#1}.$}
\makeatother

\topmargin -40pt \oddsidemargin 0pt \evensidemargin 0pt \textheight
25 true cm \textwidth 16 true cm


\title{\bf Stochastic Analysis on Path Space over Time-Inhomogeneous Manifolds with Boundary}
\markboth { }{ Stochastic Analysis on Path Space}
\author{{\bf Li-Juan Cheng \footnote{Correspondence should be addressed to Li-Juan Cheng (E-mail: chenglj@mail.bnu.edu.cn)} }\\
{\scriptsize (School of Mathematical Sciences,  Beijing Normal
University, } \\{\scriptsize Laboratory of Mathematics and Complex
Systems,  Ministry of Education, } \\ {\scriptsize Beijing 100875,
The People's Republic of China)}\\
{\scriptsize E-mail: chenglj@mail.bnu.edu.cn(L.J. Cheng)}}
\date{}

\newtheorem{theorem}{Theorem}[section]
\newtheorem{corollary}[theorem]{Corollary}
\newtheorem{lemma}[theorem]{Lemma}
\newtheorem{proposition}[theorem]{Proposition}

\newtheorem{remark}[theorem]{Remark}
\newtheorem{example}[theorem]{Example}

\numberwithin{equation}{section} \catcode`@=11


\begin{document}
\maketitle

\begin{abstract}
Let $L_t:=\Delta_t+Z_t$ for a $C^{1,1}$-vector field $Z$ on a  differential manifold $M$ with possible boundary $\partial M$,
where $\Delta_t$ is the Laplacian induced by a time dependent metric $g_t$ differentiable in $t\in [0,T_c)$.
We first introduce the damp gradient operator, defined on the path space  with reference measure $\mathbb{P}$, the law of the
(reflecting) diffusion process generated by $L_t$ on the  base manifold; then establish
the integration by parts formula for underlying directional derivatives and prove the log-Sobolev inequality for the associated Dirichlet form, which is further applied to the free path spaces; and finally, establish
 numbers of transportation-cost inequalities associated to the uniform distance, which are equivalent to the curvature lower bound  and the convexity of the  boundary.

\end{abstract}
 \vskip12pt
 \noindent {\bf Keywords}: \ \ \textbf{Metric flow, log-Sobolev inequality, integration by parts formula, path space over manifolds with boundary, reflecting $L_t$-diffusion process}

\noindent {\bf MSC(2010)}:  \ \ {60J60, 58J65, 53C44}


\def\dint{\displaystyle\int}
\def\ct{\cite}
\def\lb{\label}
\def\ex{Example}
\def\vd{\mathrm{d}}
\def\dis{\displaystyle}
\def\fin{\hfill$\square$}
\def\thm{theorem}
\def\bthm{\begin{theorem}}
\def\ethm{\end{theorem}}
\def\blem{\begin{lemma}}
\def\elem{\end{lemma}}
\def\brem{\begin{remark}}
\def\erem{\end{remark}}
\def\bexm{\begin{example}}
\def\eexm{\end{example}}
\def\bcor{\bg{corollary}}
\def\ecor{\end{corollary}}
\def\r{\right}
\def\l{\left}
\def\var{\text {\rm Var}}
\def\lmd{\lambda}
\def\alp{\alpha}
\def\gm{\gamma}
\def\Gm{\Gamma}
\def\e{\operatorname{e}}
\def\gap{\text{\rm gap}}
\def\dsum{\displaystyle\sum}
\def\dsup{\displaystyle\sup}
\def\dlim{\displaystyle\lim}
\def\dlimsup{\displaystyle\limsup}
\def\dmax{\displaystyle\max}
\def\dmin{\displaystyle\min}
\def\dinf{\displaystyle\inf}
\def\be{\begin{equation}}
\def\de{\end{equation}}
\def\dint{\displaystyle\int}
\def\dfrac{\displaystyle\frac}
\def\zm{\noindent{\bf  Proof.\ }}
\def\endzm{\quad $\Box$}
\def\mO{\mathcal{O}}
\def\mW{\mathcal{W}}
\def\mL{\mathcal{L}}
\def\LC{\mathcal{L}{\rm Cut}}
\def\proclaim#1{\bigskip\noindent{\bf #1}\bgroup\it\  }
\def\endproclaim{\egroup\par\bigskip}
\baselineskip 18pt

\section{Introduction}
Let $M$ be a $d$-dimensional differential manifold with possible boundary $\partial M$ equipped with a complete
Riemannian metric $(g_t)_{t\in [0,T_c)}$, which is $C^1$ in $t$. For simplicity, we take the notations: for $X, Y\in TM$,
\begin{align*}
  &{\rm Ric}_t^Z(X,Y):={\rm Ric}_t(X,Y)-\l<\nabla^t_XZ_t, Y\r>_t,\ \
   \mathcal{G}_t(X,X):=\partial_tg_t(X,X)\\
    &\mathcal{R}_t^{Z}(X,Y):={\rm Ric}_t^Z(X,Y)-\frac{1}{2}\mathcal{G}_t(X,Y),
\end{align*}
 where ${\rm Ric}_t$ is the Ricci curvature tensor with respect to $g_t$, $(Z_t)_{t\in [0,T_c)}$ is a $C^1$-family of vector fields, and $\l<\cdot,\cdot\r>_t:=g_t(\cdot,\cdot)$. Define the second fundamental form of the boundary by
 $$\mathbb{I}_{t}(X,Y)=-\l<\nabla^t_XN_t,Y\r>_t, \ \ X,Y\in T\partial M,$$
 where  $N_t$ is the
inward unit normal vector field of the boundary associated with $g_t$; $T\partial M$ is the tangent space of $\partial M$.
Consider the elliptic operator $L_t:=\Delta_t+Z_t$.
Let $X_t$ be the reflecting inhomogeneous diffusion process generated by $L_t$ (called reflecting $L_t$-diffusion process). Assume that $X_t$ is non-explosive before $T_c$.
 In this case, for any $0\leq S<T<T_c$, the distribution $\Pi^{S,T}$ of $X_{[S,T]}:=\{X_t: t\in [S,T]\}$
 is a probability measure on the path space
 $$W^{S,T}:=C([S,T]; M),$$
 when $S=0$, we write $W^T:=W^{0,T}$ and $\Pi^T:=\Pi^{0,T}$ for simplicity.
  For each point $x\in M$, let $X_{[0,T]}^x=\{X_t^x: 0\leq t\leq T< T_c\}$ and $W_x^T=\{\gamma\in
W^T:\gamma_0=x\}$.
 It has been well-known that there is a strong connection between
the behavior of the distribution of the $M$-valued Brownian motion
associated with metric $g$ and the geometry of their underlying
space.
 This paper is devoted to further the study of this relation  over a inhomogeneous manifold. Note that although our discussions base on the manifold with boundary, the results are also new for the manifold without boundary.

When  the metric is independent of $t$, the theory about stochastic analysis on the path space
over a complete Riemannian manifold  has been well
developed since Driver \cite{Dr} proved the quasi-invariance theorem for
the Brownian motion. Then,
integration by parts formula for the associated gradient operator,
induced by the quasi-invariant flow,  was established, which leads to
the study of the functional inequalities with respect to the corresponding
Dirichlet form (see e.g. \cite{ Hsu97, FW}).  For  the case with boundary, see \cite{Hsu02, W11b} for the corresponding results on manifold with
boundary; see \cite{Bismut,CM, FM} for an intertwining formula for the
differential of It\^{o} development map.

A probabilistic approach to these problem was initiated by  Abandon et al, who  constructed  $g_t$-Brownian motion on
time-inhomogeneous space  in \cite{ACT}.
 Recently,  Chen \cite{Ch} gives the interwining formula
and log-Sobolev inequality for usual Dirichlet form (without damp) on the path space over time-inhomogeneous manifold.
   The  main purpose of
this paper is to prove the integration by parts formula and further
establish the log-Sobolev inequality w.r.t. the associated Dirichlet
form, defined by the damped gradient operator. Note that from technical point of view, our method relies on \cite{Hsu02,W11b}.

Our second purpose is to discuss the Talagrand type
transportation-cost inequalities on the path space with respect to the
uniform distance  on  time-inhomogeneous space. In 1996, Talagrand \cite{T96} found that the $L^2$-Wasserstein distance to
the standard Gaussian measure can be dominated by the square root of
twice relative entropy on  $M=\mathbb{R}^{d}$ with constant metric. This inequality  has been extended to
distributions on finite- and infinite-dimensional spaces. In
particular, this inequality was established on the path space of
diffusion processes with respect to several different distances
(i.e. cost functions). See  \cite{FU} for the details on the Wiener
space with the Cameron-Martin distance; see \cite{W02,DGW} on the path space
of diffusions with the $L^2$-distance; see \cite{W04}  on the Riemannian path
space with intrinsic distance induced by the Malliavin gradient
operator, and \cite{FWW,WZ,W09} on the path space of diffusions with the
uniform distance.

The rest parts of the paper are organized as follows. In the following two
sections, we  construct the Hsu's multiplicative functional
and then define the corresponding damped gradient operator, which
satisfies an integration by parts formula induced by intrinsic
quasi-invariant flows. In Section 4,  the log-Sobolev inequality for the associated
Dirichlet form is established and extends to the free path space. In Section 5, some transportation-cost inequalities are presented to be equivalent to the curvature condition and the convexity of
the boundary, and parts of these are extended to non-convex case in finial section.

\section{(Reflecting) $L_t$-diffusion process  and multiplicative functional}
Let $\mathcal{F}(M)$  be the frame bundle over $M$ and
$\mathcal{O}_{t}(M)$ be
the orthonormal frame bundle over $M$ with respect to $g_t$.  Let $\mathbf{p}:
\mathcal{F}(M)\rightarrow M$ be the projection
from $\mathcal{F}(M)$ onto $M$.
Let $\{e_{\alpha}\}_{\alpha=1}^{d}$ be the canonical orthonormal basis of
$\mathbb{R}^d$. For any $u\in \mathcal{F}(M)$,
let $H^{t}_{i}(u)$ be the $\nabla^{t} $  horizontal lift
of $ue_i$ and $\{V_{\alpha, \beta}(u)\}_{\alpha,\beta=1}^d$ be the canonical basis
of vertical  fields over $\mathcal{F}(M)$, defined by $V_{\alpha,\beta}(u)=Tl_u(\exp(E_{\alpha,\beta}))$,
where $E_{\alpha,\beta}$ is the canonical basis of $\mathcal{M}_d(\mathbb{R})$, the $d\times d$ matric space over $\mathbb{R}$,
 and $l_u:Gl_d(\mathbb{R})\rightarrow \mathcal{F}(M)$ is the left multiplication from the general linear group to $\mathcal{F}(M)$, i.e.
   $l_u\exp(E_{\alpha,\beta})=u\exp(E_{\alpha,\beta})$.

 Let $B_t:=(B_t^1,B_t^2,\cdots,B_t^d)$ be a $\mathbb{R}^d$-valued Brownian motion on  a complete
filtered probability space $(\Omega,\{\mathscr{F}_t\}_{t\geq 0}, \mathbb{P})$  with the natural filtration
$\{\mathscr{F}_t\}_{t\geq 0}$. Assume the
elliptic generator $L_t$ is a $C^{1}$ functional of time with
associated metric $g_t$:
$$L_t=\Delta_t +Z_t $$
where  $Z_t $ is a $C^{1,1}$ vector field on $M$. As in the time-homogeneous case, to construct the $L_t$-diffusion process,
we first construct the corresponding horizontal diffusion process
generated by $L_{\mathcal{O}_t(M)}:=\Delta _{\mathcal {O}_t(M)}+H_{Z_t }^t$
  by solving the Stratonovich stochastic
diffusion equation (SDE):
$$\begin{cases}
 \vd u_t=\sqrt{2}\dsum_{i=1}^{d}H_{i}^t(u_t)\circ \vd
B_t^{i}+H_{Z_t}^t(u_t)\vd t-\frac{1}{2}\dsum_{i,j}\partial_tg_t(u_te_i,u_te_j)V_{i, j}(u_t)\vd t+H^t_{N_t}(u_t)\vd l_t,\medskip\\
u_0\in \mathcal{O}_0(M),
\end{cases}$$
where  $\Delta_{\mathcal{O}_t(M)}$ is the horizontal Laplace operator on $\mathcal{O}_t(M)$;
$H_{Z_t}^t$ and $H^t_{N_t}$ are the $\nabla^t$
horizontal lift of  $Z_t$ and $N_t$ respectively; $l_t$ is an increasing
process supported on $\{t\geq 0:X_t:={\bf p}u_t\in \partial M\}$.
By a similar discussion as in \cite[Proposition 1.2]{ACT}, we see that
the last term promises $u_t\in \mathcal{O}_t(M)$.
Since $(H_{Z_t}^t)_{t\in [0,T_c)}$ is a $C^{1,1}$-family vector field, it is well-known that (see
e.g. \cite{Hsu}) the equation has a unique solution up to the life time
 $\zeta:=\dlim_{n\rightarrow
\infty}\zeta_n$, and
$$\zeta_n:=\inf\{t\in [0,T_c):\rho_t({\bf p}u_0, {\bf p}u_t)\geq n\}, \ n\geq 1,\ \ \inf\varnothing=T_c,$$
where $\rho_t(x,y)$ is the distance between $x$ and $y$ associated with $g_t$. Let $X_t={\bf p}u_t$. It is easy to see that $X_t$
solves the equation $$\vd X_t=\sqrt{2}u_t\circ \vd B_t+Z_t(X_t)\vd
t+N_t(X_t)\vd l_t,\ \ X_0=x:={\bf p}u$$ up to the life time $\zeta$.
By the It\^{o} formula, for any $f\in C_0^{1,2}([0,T_c)\times M)$ with
$N_tf_t:=N_tf_t|_{\partial M}=0$,
$$f(t,X_t)-f(0,x)-\int_0^t\l({\partial_ s}+L_s\r)f(s,X_s)\vd
s=\sqrt{2}\int_0^t\l<u_s^{-1}\nabla^s f(s,\cdot)(X_s),\vd B_s\r>$$
is a
martingale up to the life time $\zeta$. Here and what follows, we denote the inner product on $\mathbb{R}^d$ by $\l<\cdot,\cdot\r>$, and  write $f(t,\cdot)=f_t$ for simplicity.  So, we call $X_t$  the
reflecting diffusion process generated by $L_t$. When $Z_t\equiv
0$, then $\tilde{X}_t:=X_{t/2}$ is generated by
$\frac{1}{2}\Delta_t$ and is called the reflecting $g_t$-Brownian
motion on $M$. In what follows, we assume the process is non-explosive.

\subsection{Multiplicative functional}
To construct the desired continuous multiplicative functional, we
need the following assumption.
\begin{enumerate}
  \item [{\bf(A)}] There exist two constants $K, \sigma \in
C([0,T_c))$ such that $$\mathcal{R}^Z_t\geq K(t),\ \mathbb{I}_t\geq \sigma(t)\ \mbox{and}\ \
\mathbb{E}e^{\lambda\int_0^t\sigma^-(s)\vd l_s^x}<\infty$$ holds for
$\lambda>0$, $t\in [0,T_c)$, and $x\in M$, where $\sigma ^-=0\vee(-\sigma)$.
\end{enumerate}

 To introduce Hsu's discontinuous multiplicative functional, we need the
lift operators of $\mathcal{R}^{Z}_t$, ${\mathcal{G}}_t$,
${\mathbb{I}}_t$. For any $u\in \mathcal{O}_t(M)$, let
$\mathcal{R}_u^Z(t)(a,b)=\mathcal{R}^{Z}_t(ua,ub)$ and
$\mathcal{G}_u(t)(a,b)=\mathcal{G}_t(ua,ub),\  a,b \in \mathbb{R}^d$. Let ${\bf p}_{\partial}: TM\rightarrow T\partial M$ be the orthogonal
projection at points on $(\partial M,g_t)$.  For
any $u \in \mathcal{O}_t(M)$ with ${\bf p}u\in \partial
M$, let
$$\mathbb{I}_u(t)(a, b)=\mathbb{I}_t({\bf p}_{\partial}ua, {\bf p}_{\partial}ub), a, b\in
\mathbb{R}^d.$$
 For $u\in \partial \mathcal{O}_t(M)$, the boundary of $\mathcal{O}_t(M)$,
let $${P}_u(t)(a, b)=\l<ua,N_t\r>_t\l<ub, N_t\r>_t, a,b\in
\mathbb{R}^d.$$

 For any $\varepsilon>0$ and $r\geq 0$, let $Q^{x,\varepsilon}_{r,t}$ solve the following SDE on $\mathbb{R}^d\otimes\mathbb{R}^d$:
\begin{align}\label{MF}\vd
Q^{x,\varepsilon}_{r,t}=-Q^{x,\varepsilon}_{r,t}\l\{\mathcal{R}_{u^x_t}^Z(t)\vd
t+(\varepsilon^{-1}P_{u_t^x}(t)+\mathbb{I}_{u_t^x}(t))\vd
l^x_t\r\}.\end{align}
When the metric is independent of $t$ and
 $M$ is compact, letting
$\varepsilon\downarrow 0$, the process $Q^{x,\varepsilon}_{r,t}
$ converges in $L^2$ to an adapted right-continuous process
$Q^x_{r,t}$ with left limit, such that $Q^x_{r,t}P_{u^x_t}(t)=0$ if
$X^x_t\in \partial M$ (see  \cite[Theorem 3.4]{Hsu02}). Here, we follow Wang \cite[Theorem 4.1.1]{Wbook2}, and introduce a slightly different but
simpler construction of the multiplicative functional by solving a
random integral equation on $\mathbb{R}^d\otimes \mathbb{R}^d$.
\begin{theorem}\label{s1-1}
Assume ${\bf(A)}$, then
\begin{enumerate}
  \item [$(1)$] for any $x \in M$, $0\leq r \leq t< T_c$ and $u_0^x\in
  \mathcal{O}_0(M)$, the equation
$$Q^x_{r,t}=\l(I-\int_r^tQ_{r,s}^x\mathcal{R}^Z_{u_s^x}(s)\vd s-\int_s^t Q_{r,s}^x\mathbb{I}_{u_s^x}(s)\vd l^x_s\r)(I-{ 1}_{\{X_t^x\in \partial M\}}P_{u_t^x}(t))$$
has a unique solution;
  \item [$(2)$] for any $0\leq r\leq t< T_c$, $\|Q_{r,t}^x\|\leq e^{-\int_r^t K(s)\vd s-\int_r^t\sigma(s)\vd l^x_s}$ a.s., where $\|\cdot\|$ is the operator norm
for $d\times d$-matrices;
  \item [$(3)$] for any $0 \leq r \leq s \leq t< T_c$, $Q^x_{
r,t}= Q^x_{r,s}Q^x_{s,t} $ a.s..
\end{enumerate}
\end{theorem}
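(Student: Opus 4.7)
The plan is to reduce the discontinuous fixed-point equation to a continuous matrix ODE by pulling the boundary projection out, and to obtain the norm bound through the penalization \eqref{MF}. With existence and uniqueness for the ODE in hand, the flow property follows from uniqueness.

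For existence and uniqueness in part (1), under \textbf{(A)} the measure $\mathcal{R}^Z_{u^x_s}(s)\vd s+\mathbb{I}_{u^x_s}(s)\vd l^x_s$ has locally bounded total variation, and Picard iteration produces a unique continuous solution $\tilde Q^x_{r,t}$ of
\begin{align*}
\tilde Q^x_{r,t}=I-\int_r^t\tilde Q^x_{r,s}\mathcal{R}^Z_{u^x_s}(s)\vd s-\int_r^t\tilde Q^x_{r,s}\mathbb{I}_{u^x_s}(s)\vd l^x_s.
\end{align*}
I then set $Q^x_{r,t}:=\tilde Q^x_{r,t}\l(I-\mathbf{1}_{\{X^x_t\in\partial M\}}P_{u^x_t}(t)\r)$. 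The verification that $Q^x$ satisfies the stated equation rests on two observations: $\{s:X^x_s\in\partial M\}$ has Lebesgue measure zero almost surely, so $\int_r^t Q^x_{r,s}\mathcal{R}^Z_{u^x_s}(s)\vd s=\int_r^t\tilde Q^x_{r,s}\mathcal{R}^Z_{u^x_s}(s)\vd s$; and the identity $(I-P_{u^x_s}(s))\mathbb{I}_{u^x_s}(s)=\mathbb{I}_{u^x_s}(s)$, built in from the projection $\mathbf{p}_\partial$, shows the same is true under $\vd l^x_s$ after multiplication by $\mathbb{I}$. Uniqueness follows because any two solutions produce the same continuous bracket process, which Gronwall identifies, and the final projection then equates the two.

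For the norm estimate in part (2), the cleanest route is the penalization \eqref{MF}. For each $\varepsilon>0$, \eqref{MF} admits a unique continuous solution $Q^{x,\varepsilon}$. Fixing $v\in\mathbb{R}^d$ and $M^\varepsilon_t:=(Q^{x,\varepsilon}_{r,t})^\top v$, the product rule combined with \textbf{(A)} and the symmetry of $P$ and $\mathbb{I}$ gives
\begin{align*}
\vd|M^\varepsilon_t|^2\leq -2K(t)|M^\varepsilon_t|^2\vd t-2\sigma(t)|(I-P_{u^x_t}(t))M^\varepsilon_t|^2\vd l^x_t-2\varepsilon^{-1}|P_{u^x_t}(t)M^\varepsilon_t|^2\vd l^x_t.
\end{align*}
Once $\varepsilon$ is small enough that $\varepsilon^{-1}\geq\sigma(s)$ on $[r,t]$, the two local-time terms combine to at most $-2\sigma(t)|M^\varepsilon_t|^2\vd l^x_t$, and Gronwall yields the uniform bound $\|Q^{x,\varepsilon}_{r,t}\|\leq\exp\{-\int_r^t K(s)\vd s-\int_r^t\sigma(s)\vd l^x_s\}$. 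As $\varepsilon\downarrow 0$ the penalty forces the normal component to vanish on $\{X^x_t\in\partial M\}$, so $Q^{x,\varepsilon}$ converges (in $L^2$, as in \cite[Theorem 3.4]{Hsu02}) to the $Q^x$ of part (1), and the bound passes to the limit.

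For the flow property (3), both $Q^x_{r,t}$ and $Q^x_{r,s}Q^x_{s,t}$, viewed as functions of $t\in[s,T_c)$, satisfy the integral equation of (1) on $[s,T_c)$ with common value $Q^x_{r,s}$ at $t=s$; left-multiplying the equation for $Q^x_{s,t}$ by $Q^x_{r,s}$ exhibits the second one, and uniqueness identifies them. The main obstacle is (2): without the $\varepsilon^{-1}P$ penalty one cannot close a Gronwall inequality because $\mathbb{I}_{u^x_s}(s)$ only controls the tangential component of $M^\varepsilon_t$ under $\vd l^x_s$; the penalization instantaneously damps the normal direction and, in the limit, realizes exactly the projection built into the equation of (1).
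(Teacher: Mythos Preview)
Your proposal is correct and the overall strategy matches the paper, but the construction in part (1) is organized differently. The paper builds $Q^x$ as a weak $L^2$-limit of the penalized family $Q^{x,\varepsilon}$: from the uniform bound $\|Q^{x,\varepsilon}_{r,t}\|\le e^{-\int_r^tK-\int_r^t\sigma\,\vd l}$ and {\bf(A)} one extracts, by weak compactness in $L^2(\mathbb{P}\times(\vd t+\vd l_t))$, a subsequential weak limit $\overline{Q}^x$, and then verifies (following \cite{W11b}) that $\overline{Q}^x(I-1_{\{X^x_t\in\partial M\}}P_{u^x_t}(t))$ solves the integral equation. You instead solve the continuous Volterra equation for $\tilde Q$ by Picard iteration and post-multiply by the projection; your two verification identities ($\{s:X^x_s\in\partial M\}$ is Lebesgue-null, and $(I-P_{u_s})\mathbb{I}_{u_s}=\mathbb{I}_{u_s}$) are exactly what is needed to see that $Q$ and $\tilde Q$ share the same bracket process. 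This direct route is more elementary and makes the c\`adl\`ag structure of $Q$ explicit; the paper's route has the advantage that the norm bound and the existence come from a single limiting argument.

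The only point that needs tightening is in (2). You invoke \cite[Theorem 3.4]{Hsu02} for the $L^2$-convergence $Q^{x,\varepsilon}\to Q^x$, but that result is stated for compact $M$ with fixed metric and does not apply verbatim here; the paper in fact flags this and passes through \cite{Wbook2,W11b} instead. The clean fix, given what you already have, is to argue as the paper does: your uniform bound on $Q^{x,\varepsilon}$ plus {\bf(A)} gives weak $L^2$-compactness, any weak limit satisfies the integral equation of (1), and your uniqueness from (1) identifies that limit with the $Q^x$ you constructed. The norm bound then transfers by weak lower semicontinuity. Your observation that one cannot close Gronwall for $\tilde Q$ directly (because $\mathbb{I}_{u_s}$ controls only the tangential component under $\vd l_s$) is exactly right and explains why the penalization is unavoidable for (2).
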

\begin{proof}
 We only consider the existence of the solution up to a arbitrarily
given time $T\in (r,T_c)$, since the uniqueness is obvious. In
the following, we drop the superscript $x$ for simplicity. By the assumption
$\mathbf{(A)}$ and (\ref{MF}), we have
\begin{align*}
\|Q_{r,t}^{\varepsilon}\|^2\leq
1-2\int_r^t\|Q_{r,s}^{\varepsilon}\|^2K(s)\vd
s-2\int_r^t\|Q_{r,s}^{\varepsilon}\|^2\sigma(s)\vd
l_s-\frac{2}{\varepsilon}\int_s^t\|Q_{r,s}^{\varepsilon}P_{u_s}(s)\|^2\vd
l_s, \ \ t>r.
\end{align*}
Therefore, we obtain $\|Q_{r,t}^{\varepsilon}\|^2\leq e^{-2\int_r^tK(s)\vd s-2\int_r^t\sigma(s)\vd l_s},\ \ t\geq r$ and
\begin{align}\label{s1-2}
\int_r^T\|Q^{\varepsilon}_{r,s}P_{u_s}(s)\|^2\vd l_s\leq
\frac{\varepsilon}{2}\l[1+2\l(\int_r^TK^-(s)\vd
s+\int_r^T\sigma^-(s)\vd l_s\r)e^{2\int_r^TK^-(s)\vd
s+\int_r^T\sigma^-(s)\vd l_s}\r].
\end{align}
Combining this with ${\bf(A)}$, we obtain
\begin{align}\label{s1-3}
\lim_{\varepsilon \rightarrow
0}\mathbb{E}\int_r^T\|Q_{r,s}^{\varepsilon}P_{u_s}(s)\|^2\vd l_s=0
\end{align}
and
$$\sup_{\varepsilon\in(0,1)}\mathbb{E}\int_r^T\|Q_{r,t}^{\varepsilon}\|^2(\vd t+ \vd l_t)<\infty.$$
Because of the latter, we may select a sequence
$\varepsilon_n\downarrow 0$ and an adapted process
$\overline{Q}_{r,\cdot}\in L^2(\Omega \times [r,T])\rightarrow
\mathbb{R}^d\otimes \mathbb{R}^d; \mathbb{P}\times (\vd t+\vd l_t)$,
such that for any $g\in L^2(\Omega\times [r,T])\rightarrow
\mathbb{R}^d; \mathbb{P}\times (\vd t+ \vd l_t)$,
$$\lim_{n\rightarrow \infty}\mathbb{E}\int_r^TQ^{\varepsilon_n}_{r,t}g_t(\vd t+\vd l_t)=\mathbb{E}\int_r^T\overline{Q}_{r,t}g_t(\vd t+\vd l_t).$$
The following discussion is almost the same as the
case with constant metric, see \cite{W11b} for details.
\end{proof}

 The following result is a direct conclusion of Theorem \ref{s1-1}.
\begin{proposition}\label{s1-p1}
 Assume {\bf(A)}. For any $\mathbb{R}^d$-valued continuous
semi-martingale $h_t$ with $$1_{\{X^x_t\in \partial
M\}}P_{u_t}(t)h_t=0,$$
$$\vd Q^x_{r,t}h_t=\overline{Q}^x_{r,t}\vd h_t-Q^x_{r,t}\mathcal{R}_{u_t^x}^Z(t)h_t\vd t-Q^x_{r,t}\mathbb{I}_{u_t^x}(t) h_t\vd l^x_t,\ \  t\geq r,$$
where
$$\overline{Q}^x_{r,t}=\l(I-\int_r^tQ^x_{r,s}\mathcal{R}^Z_{u_s^x}(s)\vd s-\int_r^tQ^x_{r,s}\mathbb{I}_{u_s^x}(s)\vd l_s^x\r).$$
\end{proposition}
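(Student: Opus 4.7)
The plan is to recognise that the claimed identity is essentially a product rule applied to $Q^x_{r,t}h_t$, once the boundary condition on $h_t$ is used to replace the discontinuous process $Q^x_{r,t}$ by the continuous, finite-variation process $\overline{Q}^x_{r,t}$. The key observation is that Theorem \ref{s1-1}(1) says
$$Q^x_{r,t} \;=\; \overline{Q}^x_{r,t}\bigl(I-1_{\{X^x_t\in\partial M\}}P_{u^x_t}(t)\bigr),$$
so under the hypothesis $1_{\{X^x_t\in\partial M\}}P_{u^x_t}(t)h_t=0$ one has
$$Q^x_{r,t}h_t \;=\; \overline{Q}^x_{r,t}h_t \qquad \text{for every } t\in[r,T_c).$$

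First, I would verify the equality displayed above by multiplying out and using the assumed boundary condition on $h_t$. This reduces the problem to computing $\vd(\overline{Q}^x_{r,t}h_t)$. By its definition,
$$\overline{Q}^x_{r,t} \;=\; I-\int_r^t Q^x_{r,s}\mathcal{R}^Z_{u^x_s}(s)\vd s-\int_r^t Q^x_{r,s}\mathbb{I}_{u^x_s}(s)\vd l^x_s$$
is continuous and of finite variation in $t$, with
$$\vd \overline{Q}^x_{r,t} \;=\; -Q^x_{r,t}\mathcal{R}^Z_{u^x_t}(t)\vd t-Q^x_{r,t}\mathbb{I}_{u^x_t}(t)\vd l^x_t.$$
Hence the product $\overline{Q}^x_{r,t}h_t$ of a continuous finite-variation process and a continuous semi-martingale can be differentiated by the ordinary integration-by-parts formula, the quadratic covariation vanishing identically. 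This yields
$$\vd(\overline{Q}^x_{r,t}h_t)\;=\;(\vd\overline{Q}^x_{r,t})\,h_t+\overline{Q}^x_{r,t}\,\vd h_t,$$
and substituting the expression for $\vd\overline{Q}^x_{r,t}$ produces precisely the stated identity.

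The main (minor) obstacle is justifying that one may pass from $Q^x_{r,t}h_t$ to $\overline{Q}^x_{r,t}h_t$ in the differential sense, since $Q^x_{r,t}$ itself is only right-continuous with left limits and jumps whenever $X^x_t$ hits $\partial M$. I would handle this by pointing out that on $\{X^x_t\notin\partial M\}$ the two processes coincide pathwise, while on $\{X^x_t\in\partial M\}$ the boundary hypothesis on $h_t$ annihilates the jump factor $P_{u^x_t}(t)h_t=0$; thus $Q^x_{r,t}h_t$ is itself a continuous semi-martingale and agrees with $\overline{Q}^x_{r,t}h_t$ identically. Once this identification is made, the remaining computation is just the standard product rule, and the conclusion follows. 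No further input beyond Theorem \ref{s1-1} and assumption \textbf{(A)} is required.
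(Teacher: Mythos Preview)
Your proposal is correct and follows essentially the same approach as the paper: use the boundary hypothesis together with Theorem~\ref{s1-1}(1) to identify $Q^x_{r,t}h_t=\overline{Q}^x_{r,t}h_t$, and then apply the It\^{o} product rule to the latter. Your write-up is in fact more explicit than the paper's, since you spell out that $\overline{Q}^x_{r,t}$ is continuous of finite variation (so the covariation term vanishes) and you address the c\`adl\`ag nature of $Q^x_{r,t}$, whereas the paper simply states the identification and concludes ``by It\^{o} formula.''
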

\begin{proof}
As $1_{\{X^x_t\in \partial M\}}P_{u_t}(t)h_t=0,$ and by Theorem
\ref{s1-1}, we have
$$Q^x_{r,t}h_t=\l(I-\int_r^tQ_{r,s}^x\mathcal{R}_{u_s^x}^Z(s)\vd s+\frac{1}{2}\int_r^tQ_{r,s}^x\mathcal{G}_{u_s^x}(s)\vd s-\int_r^tQ_{r,s}\mathbb{I}_{u_s^x}(s)\vd l_s^x\r)h_t=\overline{Q}_{r,t}^xh_t.$$
Then the proof is completed by using It\^{o} formula.
\end{proof}

Recall that $\{P_{r,t}\}_{0\leq r\leq t< T_c}$ is the Neumann  semigroup
generated by $L_t$.  The following is a consequence of Proposition
\ref{s1-p1}, which is the derivative  formula of the diffusion
semigroup, known as Bismut-Elworthy Li formula (see e.g. \cite{Bismut,EL}).
\begin{corollary}\label{s1-c1}
Assume ${\bf(A)}$. Let $f\in C^{\infty}_b(M)$. Then for any $0\leq r<t<T_c$,
$$ [r, t]\ni s\rightarrow Q^x_{r,s}(u^x_s)^{-1}\nabla ^sP_{s,t}f(X^x_s)$$ is a martingale.
Consequently,
\begin{align}\label{Bismut}(u_r^x)^{-1}\nabla^rP_{r,t}f(X_r^x)=\mathbb{E}(Q^x_{r,t}(u_t^x)^{-1}\nabla^tf(X_t^x)|\mathscr{F}_r),\end{align}
and for any adapted $\mathbb{R}_+$-valued precess $\xi$ satisfying  $\xi(r)=0, \xi(t)=1$, and $\mathbb{E}(\int_r^t\xi'(s)^2\vd s)^{\alpha}<\infty$ for $\alpha>1/2$, there holds
$$(u_r^x)^{-1}\nabla^rP_{r,t}f(X_r^x)=\frac{1}{\sqrt{2}}\mathbb{E}\l(f(X_t^x)\int_r^t\xi'(s)(Q_{r,s}^x)^*\vd B_s\big|\mathscr{F}_r\r).$$
\end{corollary}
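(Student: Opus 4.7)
The plan is to show that $s\mapsto Q^x_{r,s}(u_s^x)^{-1}\nabla^s P_{s,t}f(X_s^x)$ is a true martingale on $[r,t]$; the two displayed identities will then follow, the first by direct conditioning and the second by computing a cross-variation against the martingale $\int_r^{\cdot}\xi'(s)(Q^x_{r,s})^*\vd B_s$.

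Set $\phi_s:=(u_s^x)^{-1}\nabla^s P_{s,t}f(X_s^x)$ on $[r,t]$. Since $f\in C^\infty_b(M)$, $P_{s,t}f$ is a smooth solution of the backward parabolic equation $(\partial_s+L_s)P_{s,t}f=0$ with Neumann data $N_sP_{s,t}f|_{\partial M}=0$. Applying It\^o's formula to $\phi_s$ on the time-dependent orthonormal frame bundle, two distinct contributions combine to produce the drift of $\phi_s$: the commutator identity $[\nabla^s,L_s]\psi=-\mathrm{Ric}^Z_s(\nabla^s\psi,\cdot)^\sharp$ for $\psi=P_{s,t}f$, and the evolution of the musical isomorphism $\partial_s\nabla^s\psi=\nabla^s\partial_s\psi-\mathcal{G}_s(\nabla^s\psi,\cdot)^\sharp$; their combined effect (using $(\partial_s+L_s)\psi=0$) produces exactly the drift $\mathcal{R}^Z_{u_s^x}(s)\phi_s\vd s$. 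The reflection along $\partial M$ contributes $\mathbb{I}_{u_s^x}(s)\phi_s\vd l_s^x$ by the usual Hsu calculation. Thus
\[
\vd\phi_s = \sqrt{2}\,(u_s^x)^{-1}(\mathrm{Hess}^sP_{s,t}f)(u_s^x\,\cdot\,)\vd B_s + \mathcal{R}^Z_{u_s^x}(s)\phi_s\vd s + \mathbb{I}_{u_s^x}(s)\phi_s\vd l_s^x.
\]
The Neumann condition gives $P_{u_s^x}(s)\phi_s=0$ on $\{X_s^x\in\partial M\}$, so Proposition \ref{s1-p1} applies and the $\vd s$- and $\vd l_s^x$-terms appearing in $\vd(Q^x_{r,s}\phi_s)$ cancel against those inherited from $\vd\phi_s$, leaving only a local martingale. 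The exponential bound of Theorem \ref{s1-1}(2), the integrability built into assumption $\mathbf{(A)}$, and standard parabolic estimates controlling $\mathrm{Hess}^sP_{s,t}f$ then upgrade this local martingale to a true martingale. Evaluating at $s=r$ and $s=t$ (with $Q^x_{r,r}=I$) yields (\ref{Bismut}).

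For the integration-by-parts formula, It\^o applied to $P_{s,t}f(X_s^x)$ (whose $\vd s$- and boundary contributions both vanish by the above) shows that $P_{s,t}f(X_s^x)$ is itself a martingale with $\vd P_{s,t}f(X_s^x)=\sqrt{2}\l<\phi_s,\vd B_s\r>$. By Theorem \ref{s1-1}(2) and the moment hypothesis on $\xi'$, the $\mathbb{R}^d$-valued process $N_\cdot:=\int_r^\cdot\xi'(s)(Q^x_{r,s})^*\vd B_s$ is a square-integrable martingale. Taking the cross-variation of $P_{s,t}f(X_s^x)$ with $N_s$, conditioning at $s=t$ on $\mathscr{F}_r$, invoking the martingale property $\mathbb{E}(Q^x_{r,s}\phi_s\mid\mathscr{F}_r)=\phi_r$ established above together with a Fubini exchange (justified by the $L^p$-bounds on $Q^x_{r,s}$ and on $\xi'$), and using $\int_r^t\xi'(s)\vd s=1$, one recovers the asserted identity.

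The main technical hurdle is the careful derivation of $\vd\phi_s$: on a time-dependent Riemannian manifold the gradient and the time derivative do not commute, and one must identify the precise combination of the Weitzenb\"ock contribution with the $\mathcal{G}_s$-correction as the ``modified curvature'' $\mathcal{R}^Z$ needed for the drift to cancel in Proposition \ref{s1-p1}. Once this identification, together with the correct $\mathbb{I}$-term arising from the reflection, is in place, the remainder is routine martingale theory combined with the exponential $L^p$-bounds on $Q^x_{r,s}$ guaranteed by assumption $\mathbf{(A)}$.
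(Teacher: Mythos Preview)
Your approach is essentially the same as the paper's: compute $\vd\phi_s$ by It\^o on the frame bundle, invoke Proposition~\ref{s1-p1} so that the bounded-variation terms cancel, upgrade the resulting local martingale to a true martingale via assumption {\bf(A)}, and then derive the Bismut--Elworthy--Li identity by pairing $P_{s,t}f(X_s^x)$ with the martingale $\int_r^{\cdot}\xi'(s)(Q_{r,s}^x)^*\vd B_s$ (the paper simply cites \cite{W11b} for this last step, while you spell it out).

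One imprecision worth flagging: the raw $\vd l_s^x$-term produced by It\^o in $\vd\phi_s$ is $u_s^{-1}\mathrm{Hess}^s_{P_{s,t}f}(N_s,u_s\,\cdot\,)$, \emph{not} $\mathbb{I}_{u_s^x}(s)\phi_s$. The Hsu identity you allude to, namely $\mathrm{Hess}^s_{P_{s,t}f}(N_s,\mathbf{p}_\partial v)=\mathbb{I}_s(\nabla^sP_{s,t}f,\mathbf{p}_\partial v)$ (a consequence of the Neumann condition), matches these two expressions only on the tangential part of $T_{X_s}M$; the normal component $\mathrm{Hess}^s_{P_{s,t}f}(N_s,N_s)$ is uncontrolled. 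It is the property $1_{\{X_s^x\in\partial M\}}Q^x_{r,s}P_{u_s^x}(s)=0$ from Theorem~\ref{s1-1} that kills this normal piece after multiplication by $Q^x_{r,s}$, and the paper writes this cancellation out explicitly. Your formula for $\vd\phi_s$ is therefore not literally correct as stated, though the subsequent cancellation argument goes through once this is repaired.
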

\begin{proof}
The proof is essentially due to \cite[Corollary 3.4]{W11b}. Without losing generality, we assume $r=0$ and drop the
superscript $x$ for simplicity.

We first prove that $Q_sh_s$ is a martingale. Let
$h_s=(u_s)^{-1}\nabla^{s}f(X_s)$. Since $\nabla^{s}P_{s,t}f$ is
vertical to $N_s$ on $\partial M$, $1_{\{X_s\in \partial
M\}}P_{u}(s)h_s=0$. Then  we have
$$\vd Q_sh_s=\overline{Q}_s\vd h_s-Q_s\mathcal{R}_u^Z(s)h_s\vd s-Q_s\mathbb{I}_u(s)h_s\vd l_s.$$
Let
$F(u,s):=u^{-1}\nabla^{s}P_{s,t}f(\mathbf{p}u),\ u\in \mathcal{O}_s(M)$, then
$$\frac{\vd}{\vd s}F(u,s)=-u^{-1}\nabla^{s}P_{s,t}f({\bf p}u)=-L_{\mathcal{O}_s(M)}F(\cdot,s)(u)+(\mathcal{R}_u^Z(s)+\frac{1}{2}\mathcal{G}_u(s))F(u,s), s\in [0,t].$$
On the other hand, noting that
$$\vd u_t=\sqrt{2}\sum_{i=1}^{d}H_i^t\circ \vd B_t^{i}+H^t_{Z_t}(u_t)\vd t-\frac{1}{2}\sum_{\alpha,\beta=1}^{d}\mathcal{G}_{\alpha,\beta}(t,u_t)V_{\alpha,\beta}(u_t)\vd t+H^t_{N_t}(u_t)\vd l_t.$$
By the It\^{o} formula, we have
\begin{align}\label{Ieq}
\vd F(u_s, t_0)=&\vd M_s+L_{\mathcal{O}_s{M}}F(\cdot,
t_0)(u_s)\vd s+H_{N_s}^sF(\cdot, t_0)(u_s)\vd l_s\nonumber\\
&
-\frac{1}{2}\sum_{\alpha,\beta}\mathcal{G}_{\alpha,\beta}(s,u_s)V_{\alpha,\beta}(u_s)F(\cdot,
t_0)(u_s)\vd s.
\end{align}
where $$\vd M_s:=\sqrt{2}\sum_{i=1}^{d}H^s_{i}F(\cdot,
t_0)(u_s)\vd B_s^i.$$ Therefore,
$$\vd h_s=\vd M_s+\mathcal{R}_u^Z(s)h_s\vd s +H_{N_s}^sF(\cdot, s)(u_s)\vd l_s.$$
Since $1_{\{X_s\in \partial M\}}Q_sP_{u_s}(s)=0$, combining this with (\ref{Ieq}), we
obtain
$$\vd Q_sh_s=Q_s\vd M_s + Q_s(I-P_{u_s}(s))\l\{H_{N_s}^sF(\cdot, s)(u_s)-\mathbb{I}_{u}{(s)}F(u_s, s)\r\}\vd l_s.$$
Noting that for any $e\in \mathbb{R}^d$, it follows from  that when
$X_s\in \partial M$,
\begin{align*}
\l<(I-P_{u_s}(s))H^s_{N_s}F(\cdot, s)(u_s),e\r>&={\rm
Hess}^s_{P_{s,t}f}(N_s, \mathbf{p}_{\partial}u_se)
=\mathbb{I}_s(\nabla^{s}P_{s,t}f(X_s),
\mathbf{p}_{\partial}u_se)\\
&=\mathbb{I}_{u}(s)(F(u,s),e)=\l<\mathbb{I}_{u}(s)F(u,s),e\r>,
\end{align*}
we conclude that
$$(1-P_u(s))\{H^s_{N_s}F(\cdot, s)(u_s)-\mathbb{I}_{u}(s)F(u_s,s)\}\vd l_s=0.$$
Therefore, $Q_sh_s$ is a local martingale. By ({\bf A}), it is then a martingale
according to \cite[Theorem 4.9]{Cheng}.

The following step is similar as shown in step (b) in the proof of
\cite[Lemma 3.3]{W11b}. We skip it here.

\end{proof}
\section{Damped Gradient, quasi-invariant flows and integration
by Parts}
 When the metric is independent of $t$, the Malliavin derivative can be realized by
quasi-invariant flows  for diffusion on manifolds (see e.g. \cite{Hsu, W11b}). In this section,
by using the multiplicative functional constructed in \S 2.1, we first introduce the damped gradient operator as in \cite{FM}, then introduce quasi-invariant flows
induced by SDEs with refection, and finally link them by
establishing an integration by parts formula.
\subsection{Damped Gradient operator and quasi-invariant flows}
We shall use multiplicative functionals $\{Q^x_{r,t}: 0\leq r\leq t
< T_c\}$ to define the damped gradient operator for functionals of
$X^x$.

Let
$$\mathscr{F}C_0^{\infty}=\{W^T\ni\gamma \rightarrow f(\gamma_{t_1},\gamma_{t_2},\cdots, \gamma_{t_n}): n\geq 1, 0<t_1<t_2<\cdots < t_n \leq T, f \in C_0^{\infty}(M^n)\}$$
be the class of smooth cylindrical functions on $W^T$.  Let
$${\bf H}_0:=\l\{h\in C([0,T];\mathbb{R}^d): h(0)=0, \|h\|_{{\bf H}_0}:=\int_0^T|\dot{h}(s)|^2\vd s<\infty\r\}$$
be the Cameron-Martin space on the flat path space.
For any $F\in \mathscr{F}C^{\infty}_0$ with
$F(\gamma)=f(\gamma_{t_1},\gamma_{t_2},\cdots, \gamma_{t_n})$,
define the damped gradient $D^0F(X_{[0,T]}^x)$ as an ${\bf
H}_0$-valued random variable by setting $(D^0F(X^x_{[0,T]}))(0)=0$
and
$$\frac{\vd }{\vd t}(D^0F(X^x_{[0,T]}))(t)=\sum_{i=1}^{n}1_{\{t<t_i\}}Q^x_{t,t_i}(u^x_{t_i})^{-1}\nabla^{t_i}_if(X_{t_1}^x,X_{t_2}^x,\cdots,X_{t_n}^x), \ t\in [0,T].$$
where $\nabla^{t_i}_{i}$ denotes the gradient operator w.r.t. the
$i$-th component associated with $g_{t_i}$. Then, for any ${\bf
H}_0$-valued random variable $h$, let
$$D^0_hF(X_{[0,T]}^x)=\l<D^0 F(X_{[0,T]}), h\r>_{{\bf H_0}}=\sum_{i=1}^{n}\int_0^{t_i}\l<(u_{t_i}^x)^{-1}\nabla^{t_i}_if(X_{t_1}^x, X_{t_2}^x,\cdots, X_{t_n}^x),(Q_{t,t_i}^x)^*h'(t)\r>\vd t,$$
We would like to indicate that the formulation of $D^0_hF$ is consistent with \cite{FM} for the case with constant metric.
Note that compared with usual gradient operator, it contains
 the multiplicative functional, which  affects the log-Sobolev constant.
This operator links $D^0_hF$ to the directional derivative
induced by a quasi-invariant flow. We now turn to investigating this relation.  The main idea essentially due to \cite{Hsu02},  where
quasi-invariant flows are constructed for constant manifold $M$ with boundary
$\partial M$. Let ${\bf \tilde{H}_0}$ be the set of all adapted
elements in $L^2(\Omega\rightarrow \mathbf{H}_0;
\mathbb{P})$; i.e.
$${\bf \tilde{H}_0}=\{h\in L^2(\Omega \rightarrow {\bf {H}_0}; \mathbb{P}): h(t)\  \mbox{is\  a}\  \mathscr{F}_t\mbox{-measurable}, t\in [0,T]\}.$$
Then, ${\bf \tilde{H}_0}$ is a Hilbert space with inner product
 $$\big<h,\tilde{h}\big>_{{\bf \tilde{H}_0}}:=\mathbb{E}\big<h,\tilde{h}\big>_{{\bf H_0}}=\mathbb{E}\int_0^T\big<h'(t),\tilde{h}'(t)\big>\vd t, h,\  \tilde{h}\in {{\bf \tilde{H}_0}}.$$
 For $h\in
\mathbf{\tilde{H}_0}$ and $\varepsilon >0$, let $X^{\varepsilon,
h}_t$ solve the SDE
\begin{align}\label{2s-e1}
\vd X_t^{\varepsilon, h}=\sqrt{2} u_t^{\varepsilon, h}\circ \vd
B_t+Z_t( X_t^{\varepsilon, h})\vd t++\varepsilon\sqrt{2}u_t^{\varepsilon, h}
h'(t)\vd t+N_t( X_t^{\varepsilon, h})\vd
l_t^{\varepsilon, h},
\end{align}
where $l_t^{\varepsilon, h}$  and $u_t^{\varepsilon, h}$ are,
respectively, the local time on $\partial M$ and the horizontal lift
on $\mathcal{O}_t(M)$ for $X_t^{\varepsilon, h}$. The detailed construction of $X^{\varepsilon,
h}_t$ is similar as in Section 2.  To see that
$\{X_{[0,T]}^{\varepsilon, h}\}_{\varepsilon\geq 0}$ has the flow
property also in our setting, let
$$\Theta: W_0:=\{\omega \in C([0,T]; \mathbb{R}^d): \omega _0=0\}\rightarrow W^T$$
be measurable such that $X=\Theta (B)$, $B\in W_0$. For any $\varepsilon >0$ and
a function $\Phi:W_0\rightarrow W^T$, let
$(\theta^h_{\varepsilon}\Phi)(\omega)=\Phi(\omega +\varepsilon h)$.
Then $X_{[0,T]}^{\varepsilon, h}=(\theta ^h_{\varepsilon}\Theta)(B),\
\varepsilon>0$. Hence,
$$X^{\varepsilon_1+\varepsilon_2, h}_{[0,T]}=\theta^h_{\varepsilon_1}X^{\varepsilon_2, h}_{[0,T]},\  \varepsilon_1, \varepsilon_2\geq 0.$$
Moreover, let us explain that the flow is quasi-invariant, i.e. for
each $\varepsilon\geq 0$, the distribution of $X^{\varepsilon,
h}_{[0,T]}$ is absolutely continuous w.r.t. that of $X^x_{[0,T]}$.
Let
$$R^{\varepsilon, h}=\exp{\l[\varepsilon \int_0^T\l<h'(t), \vd B_t\r>-\frac{\varepsilon^2}{2}\int_0^T|h'(t)|^2\vd t\r]}.$$
By the Girsanov theorem
 $$B_t^{\varepsilon, h}:=B_t-\varepsilon h(t)$$
 is the $d$-dimensional Brownian motion under the probability $R^{\varepsilon, h}\mathbb{P}$. Thus, the distribution
of $X^x_{[0,T]}$ under $R^{\varepsilon, h}$ coincides with that of
$X^{\varepsilon,h}_{[0,T]}$ under $\mathbb{P}$. Therefore, the map
$X^x_{[0,T]}\rightarrow X^{\varepsilon,h}_{[0,T]}$ is
quasi-invariant. The quasi-invarient property leads us to prove the
following property.
\begin{proposition}\label{s2-l1}
Let $x\in M$ and $F\in \mathscr{F}C^{\infty}$. Then
\begin{align*}
\lim_{\varepsilon \downarrow 0}\mathbb{E}\frac{F(X^{\varepsilon,
h}_{[0,T]})-F(X_{[0,T]}^x)}{\varepsilon}=\mathbb{E}\l\{F(X_{[0,T]}^x)\int_0^T\l<h'(t),\vd
B_t\r>\r\}
\end{align*}
holds for $h\in {\bf\tilde{H}}_{0,b}$, the set of all elements in ${\bf
\tilde{H}}_{0}$ with bounded $\|h\|_{\tilde{\mathbf{H}}_0}$.
\end{proposition}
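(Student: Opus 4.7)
The plan is to reduce everything to a computation about the Cameron--Martin density $R^{\varepsilon,h}$ by invoking the Girsanov-type quasi-invariance established just before the statement. Since $B^{\varepsilon,h}_t := B_t-\varepsilon h(t)$ is a Brownian motion under $R^{\varepsilon,h}\mathbb{P}$, the law of $X^x_{[0,T]}$ under $R^{\varepsilon,h}\mathbb{P}$ coincides with that of $X^{\varepsilon,h}_{[0,T]}$ under $\mathbb{P}$, so
$$\mathbb{E}[F(X^{\varepsilon,h}_{[0,T]})] = \mathbb{E}[R^{\varepsilon,h}\,F(X^x_{[0,T]})],$$
which turns the difference quotient into
$$\mathbb{E}\frac{F(X^{\varepsilon,h}_{[0,T]})-F(X^x_{[0,T]})}{\varepsilon} = \mathbb{E}\l[F(X^x_{[0,T]})\,\frac{R^{\varepsilon,h}-1}{\varepsilon}\r].$$
The entire assertion is therefore reduced to an a.s.\ and $L^1$ expansion of $R^{\varepsilon,h}$.

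Next I would compute the pointwise derivative. Writing $M_h:=\int_0^T\l<h'(t),\vd B_t\r>$ and $N_h:=\int_0^T|h'(t)|^2\vd t$, one has $R^{\varepsilon,h}=\exp(\varepsilon M_h-\tfrac{1}{2}\varepsilon^2 N_h)$, so by direct differentiation at $\varepsilon=0$,
$$\frac{R^{\varepsilon,h}-1}{\varepsilon}\longrightarrow M_h\qquad \mathbb{P}\text{-a.s.\ as }\varepsilon\downarrow 0.$$
Since $F$ is a bounded cylindrical functional, once this convergence is upgraded to $L^1(\mathbb{P})$ the conclusion follows.

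The technical heart of the proof is therefore the uniform-integrability upgrade, and this is where the hypothesis $h\in\tilde{\mathbf{H}}_{0,b}$ enters. The bound $N_h\leq C$ a.s.\ makes Novikov's criterion uniform in $\varepsilon\in(0,1]$, yielding $\sup_{\varepsilon\in(0,1]}\mathbb{E}[(R^{\varepsilon,h})^p]<\infty$ for every $p>1$, and the identity
$$\frac{R^{\varepsilon,h}-1}{\varepsilon} = \int_0^1 R^{s\varepsilon,h}(M_h-s\varepsilon N_h)\,\vd s$$
together with H\"older's inequality then gives $\sup_{\varepsilon\in(0,1]}\mathbb{E}\l|\tfrac{R^{\varepsilon,h}-1}{\varepsilon}\r|^p<\infty$ for some $p>1$. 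Combined with the a.s.\ convergence above and the boundedness of $F$, Vitali's convergence theorem delivers the claimed identity.

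The only real obstacle is the uniform $L^p$ control of the difference quotient; once this is secured, the argument is a short Girsanov calculation. The inhomogeneity of the metric $g_t$ plays no role here, because $R^{\varepsilon,h}$ depends only on the driving Brownian motion $B$ and not on the frame bundle dynamics, so the argument is essentially the same as in the time-homogeneous case treated in \cite{Hsu02,W11b}.
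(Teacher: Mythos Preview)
Your approach is essentially identical to the paper's: both use the Girsanov identity $\mathbb{E}F(X^{\varepsilon,h}_{[0,T]})=\mathbb{E}[R^{\varepsilon,h}F(X^x_{[0,T]})]$ and then differentiate the exponential density at $\varepsilon=0$. The paper's justification of the passage to the limit is terser (it simply invokes ``the dominated convergence theorem since $\{R^{\varepsilon,h}\}_{\varepsilon\in[0,1]}$ is uniformly integrable''), whereas you supply the cleaner and more complete argument via the integral representation of the difference quotient and a uniform $L^p$ bound; this is a genuine improvement in rigor over the paper's sketch, but not a different method.
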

\begin{proof}
As we have explained that $B_t^{\varepsilon, h}=B_t-\varepsilon
h(t)$ is a $d$-dimensional Brownian motion under
$R^{\varepsilon,h}\mathbb{P}$. By the weak uniqueness of
(\ref{2s-e1}), we conclude that the distribution of $X^x$ under
$R^{\varepsilon, h}\mathbb{P}$ coincide with that of
$X^{\varepsilon, h}$ under $\mathbb{P}$. In particular,
$\mathbb{E}F(X_{[0,T]}^{\varepsilon, h})=\mathbb{E}[R^{\varepsilon,
h}F(X_{[0,T]}^x)]$. Thus, the assertion follows from $\frac{\vd
R^{\varepsilon,h}}{\vd \varepsilon}|_{\varepsilon=0}$ and the
dominated convergence theorem since $\{R^{\varepsilon,
h}\}_{\varepsilon \in [0,1]}$ is uniformly integrable for  $h\in
\tilde{{\bf H}}_{0,b}$.
\end{proof}
\subsection{ Integration by parts formula}
 In this section, an integration by parts formula for $D^0_hF$ is established and  applied to clarifying the link between this formula and
the derivative induced by the flow $\{X^{\varepsilon,h}_{[0,T]}
\}_{\varepsilon \geq 0}.$ The main result of this subsection is presented as follows.
\begin{theorem}\label{s2-t1}
Assume ${\bf(A)}$. For any $x\in M$ and $F\in
\mathscr{F}C_0^{\infty}$,\begin{align}\label{In} \lim_{\varepsilon \downarrow
0}\mathbb{E}\frac{F(X^{\varepsilon,h}_{[0,T]})-F(X^x_{[0,T]})}{\varepsilon}=\mathbb{E}\{D^0_hF\}(X^x_{[0,T]})=\mathbb{E}\l\{F(X^{x}_{[0,T]})\int_0^T\l<h'(t),
\vd B_t\r>\r\} \end{align} holds for all $h\in {\bf
\tilde{H}}_{0,b}$.
\end{theorem}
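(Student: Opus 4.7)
The first equality in \eqref{In} is exactly the statement of Proposition~\ref{s2-l1}, so the substantive task is to establish the integration by parts identity
\[
\mathbb{E}(D^0_h F)(X^x_{[0,T]}) = \mathbb{E}\left\{F(X^x_{[0,T]})\int_0^T\langle h'(t), \vd B_t\rangle\right\}.
\]
I would prove this by induction on $n$, where $F(\gamma) = f(\gamma_{t_1}, \ldots, \gamma_{t_n})\in\mathscr{F}C_0^{\infty}$, with Corollary~\ref{s1-c1} supplying the driving martingale identity.

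For the base case $n = 1$ with $F(\gamma) = f(\gamma_{t_1})$, the first step is to apply It\^o's formula to $s\mapsto P_{s,t_1}f(X^x_s)$ on $[0,t_1]$. Since $P_{s,t_1}f$ solves the backward Kolmogorov equation $(\partial_s+L_s)P_{s,t_1}f = 0$ together with the Neumann condition $N_s P_{s,t_1}f = 0$, both the drift and the boundary local-time contributions cancel, leaving a martingale representation of $f(X^x_{t_1})$ as $P_{0,t_1}f(x)$ plus a stochastic integral against $\vd B$ whose integrand is proportional to $(u^x_s)^{-1}\nabla^s P_{s,t_1}f(X^x_s)$. Multiplying by $\int_0^T\langle h'(t),\vd B_t\rangle$, truncating the latter to $[0,t_1]$ via adaptedness of $h$, and applying It\^o's isometry rewrites the right-hand side as an expectation in $(u^x_s)^{-1}\nabla^s P_{s,t_1}f(X^x_s)$. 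Substituting the Bismut identity $(u^x_s)^{-1}\nabla^s P_{s,t_1}f(X^x_s) = \mathbb{E}[Q^x_{s,t_1}(u^x_{t_1})^{-1}\nabla^{t_1}f(X^x_{t_1}) \mid \mathscr{F}_s]$ from Corollary~\ref{s1-c1} and unfolding the definition of $D^0_h F$ closes the base case.

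For the induction step from $n-1$ to $n$, given $F(\gamma) = f(\gamma_{t_1}, \ldots, \gamma_{t_n})$ I would introduce
\[
G(x_1, \ldots, x_{n-1}) := \bigl[P_{t_{n-1},t_n}\bigl(f(x_1, \ldots, x_{n-1}, \cdot)\bigr)\bigr](x_{n-1}),
\]
so that by the Markov property $\mathbb{E}[f(X_{t_1}, \ldots, X_{t_n}) \mid \mathscr{F}_{t_{n-1}}] = G(X_{t_1}, \ldots, X_{t_{n-1}})$. Splitting $\int_0^T\langle h',\vd B\rangle$ at $t_{n-1}$ and $t_n$, discarding the $[t_n,T]$ piece by adaptedness, and running the base-case argument on $[t_{n-1}, t_n]$ applied to the parameter-dependent function $f(X_{t_1}, \ldots, X_{t_{n-1}}, \cdot)$ isolates the $i = n$ contribution of $\mathbb{E}(D^0_h F)$. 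What remains is an expression in $G(X_{t_1}, \ldots, X_{t_{n-1}})$ and the stochastic integral over $[0, t_{n-1}]$, to which the induction hypothesis applies, producing the contributions indexed by $i = 1, \ldots, n-1$.

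The principal obstacle is the reassembly in the inductive step: since $G$ depends on $x_{n-1}$ both through the semigroup's spatial argument and through the parameter of $f$, its gradient $\nabla^{t_{n-1}}_{n-1}G$ splits into two pieces, and matching these with the $\nabla^{t_i}_i f$-terms originally appearing in $D^0_h F$ requires the multiplicative relation $Q^x_{r,t} = Q^x_{r,s}Q^x_{s,t}$ of Theorem~\ref{s1-1}(3) to concatenate multiplicative functionals across $t_{n-1}$, combined with iterated use of the Bismut identity in Corollary~\ref{s1-c1}. A secondary technical point is the cancellation of the Neumann boundary terms generated at each application of It\^o's formula; these are annihilated by the property $Q^x_{r,t}P_{u_t}(t) = 0$ on $\{X_t\in\partial M\}$ built into Theorem~\ref{s1-1}, without which the Itô-isometry step would be polluted by local-time contributions and the identification with $D^0_h F$ would break down.
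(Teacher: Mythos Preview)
Your plan is correct and relies on the same ingredients as the paper, but one attribution needs correcting and the organisation is slightly different.

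First, Proposition~\ref{s2-l1} does not give the \emph{first} equality in \eqref{In}; it identifies the leftmost limit with the \emph{rightmost} stochastic-integral expression directly. What remains is therefore the second equality, which is exactly what you then set out to prove --- so your logic is intact, only the labelling is off. (The paper says the same: ``By Proposition~\ref{s2-l1}, it is sufficient to prove the second equality.'')

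As for the second equality, the paper packages the argument through two auxiliary results: Lemma~\ref{s2-l3} is precisely your base case $n=1$, and Lemma~\ref{s2-l2} carries out an induction on $n$ to compute $(u_{t_1})^{-1}\nabla^{t_1}_1\mathbb{E}[f(X_{t_1},\ldots,X_{t_n})\mid\mathscr{F}_{t_1}]$ as $\sum_i \mathbb{E}[Q_{t_1,t_i}(u_{t_i})^{-1}\nabla^{t_i}_i f\mid\mathscr{F}_{t_1}]$, splitting off the \emph{first} time $t_1$ at each step. These two lemmas are then combined via the Markov property, decomposing $\int_0^T\langle h',\vd B\rangle$ over the successive intervals $[t_{j-1},t_j]$. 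Your scheme instead runs the induction directly on the full integration-by-parts identity, peeling off the \emph{last} time $t_n$; the two-piece gradient of $G$ in $x_{n-1}$ that you flag, reconciled via $Q_{t,t_{n-1}}Q_{t_{n-1},t_n}=Q_{t,t_n}$, is exactly the content that the paper encodes in Lemma~\ref{s2-l2}. Either ordering works and the substance is the same.

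One small overstatement: in the It\^{o} expansion of $P_{s,t_1}f(X_s)$, the local-time term vanishes because of the Neumann condition $N_sP_{s,t_1}f=0$, not because of $Q_{r,t}P_{u_t}(t)=0$; the latter property is what makes Corollary~\ref{s1-c1} itself go through.
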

\begin{proof} By the Proposition \ref{s2-l1}, it is sufficient for us to prove the second equality. The proof is similar to the constant metric case (see \cite[Theorem 2.1]{W11b}) due to the following Lemmas \ref{s2-l2} and \ref{s2-l3} and the Markov property, . Note that the second equality holds for all $h\in \tilde{{\bf H}}_0$.
\end{proof}
The following lemma gives the gradient formula
for special cylinder functions.
\begin{lemma}\label{s2-l2}
For any $n\geq 1$, $0<t_1<\cdots <t_n\leq T$, and $f\in
C^{\infty}(M^n)$,
\begin{align*}
(u_{t_1}^x)^{-1}\nabla^{t_1}_{1}\mathbb{E}\{f(X_{t_1}^x,X_{t_2}^x,\cdots,
X_{t_n}^x)|\mathscr{F}_{t_1}\}=\sum_{i=1}^{n}\mathbb{E}\{Q_{t_1,t_i}^x(u_{t_i}^x)^{-1}\nabla^{t_i}_{i}f(X_{t_1}^x,X_{t_2}^x,\cdots,
X_{t_n}^x)| \mathscr{F}_{t_1}\}
\end{align*}
holds for all $x\in M$ and $u_0^x\in \mathcal{O}_0(M)$, where
$\nabla^{t_i}_{i}$ denotes the $g_{t_i}$-gradient w.r.t. the $i$-th
component.
\end{lemma}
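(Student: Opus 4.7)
The plan is to prove this lemma by induction on $n$, using the Bismut--Elworthy--Li formula from Corollary \ref{s1-c1} as the base case and the cocycle relation $Q_{t_1,t_i}^x Q_{t_i,t_j}^x = Q_{t_1,t_j}^x$ from Theorem \ref{s1-1}(3) to glue the inductive step together. The case $n=1$ is exactly the identity
$$(u_{t_1}^x)^{-1}\nabla^{t_1}P_{t_1,t_n}f(X_{t_1}^x) = \mathbb{E}\bigl(Q_{t_1,t_n}^x (u_{t_n}^x)^{-1}\nabla^{t_n}f(X_{t_n}^x)\,\big|\,\mathscr{F}_{t_1}\bigr)$$
applied to the Neumann semigroup $P_{r,t}$, so nothing new is required there.

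For the inductive step, I would peel off the last time. By the Markov property of $X^x$, conditioning on $\mathscr{F}_{t_{n-1}}$ inside the outer conditional expectation gives
$$\mathbb{E}\bigl[f(X_{t_1}^x,\ldots,X_{t_n}^x)\mid\mathscr{F}_{t_1}\bigr]=\mathbb{E}\bigl[\tilde{f}(X_{t_1}^x,\ldots,X_{t_{n-1}}^x)\mid\mathscr{F}_{t_1}\bigr],$$
where the new cylindrical function is
$$\tilde{f}(y_1,\ldots,y_{n-1}):=\bigl(P_{t_{n-1},t_n}f(y_1,\ldots,y_{n-1},\cdot)\bigr)(y_{n-1}).$$
Applying the inductive hypothesis to $\tilde f$, which lives on $n-1$ time points, produces the sum $\sum_{i=1}^{n-1}\mathbb{E}\bigl[Q_{t_1,t_i}^x(u_{t_i}^x)^{-1}\nabla^{t_i}_i\tilde f(X_{t_1}^x,\ldots,X_{t_{n-1}}^x)\mid\mathscr{F}_{t_1}\bigr]$, so it remains to compute $\nabla^{t_i}_i\tilde f$ in terms of $\nabla^{t_i}_i f$ and $\nabla^{t_n}_n f$.

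For $i<n-1$, the variable $y_i$ enters only as an argument of $f$, so by dominated convergence the gradient commutes with $P_{t_{n-1},t_n}$ and
$\nabla^{t_i}_i\tilde{f}(y_1,\ldots,y_{n-1})=\bigl(P_{t_{n-1},t_n}\nabla^{t_i}_i f(y_1,\ldots,y_{n-1},\cdot)\bigr)(y_{n-1})$.
For $i=n-1$, the variable $y_{n-1}$ appears both as an argument of $f$ and as the base point of the semigroup, so the chain rule produces two contributions: a ``frozen base point'' piece, handled as in the previous case, and a ``frozen integrand'' piece to which I apply Corollary \ref{s1-c1} in the form
$$(u_{t_{n-1}}^x)^{-1}\nabla^{t_{n-1}}\bigl(P_{t_{n-1},t_n}f(\ldots,X_{t_{n-1}}^x,\cdot)\bigr)=\mathbb{E}\bigl(Q_{t_{n-1},t_n}^x(u_{t_n}^x)^{-1}\nabla^{t_n}_n f(X_{t_1}^x,\ldots,X_{t_n}^x)\mid\mathscr{F}_{t_{n-1}}\bigr).$$
Combining the first piece with the first $n-2$ terms of the inductive sum, after one more use of the Markov property, yields $\sum_{i=1}^{n-1}\mathbb{E}\bigl[Q_{t_1,t_i}^x(u_{t_i}^x)^{-1}\nabla^{t_i}_if(X_{[0,T]}^x)\mid\mathscr{F}_{t_1}\bigr]$, while the second piece, together with the prefactor $Q_{t_1,t_{n-1}}^x$ and the tower property, produces $\mathbb{E}\bigl[Q_{t_1,t_{n-1}}^x Q_{t_{n-1},t_n}^x(u_{t_n}^x)^{-1}\nabla^{t_n}_n f(X_{[0,T]}^x)\mid\mathscr{F}_{t_1}\bigr]$, which collapses to the missing $i=n$ term via $Q_{t_1,t_{n-1}}^x Q_{t_{n-1},t_n}^x=Q_{t_1,t_n}^x$.

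The main obstacle will be the differentiation at the overlap index $i=n-1$: one has to be careful that the variable $y_{n-1}$ both as an argument of $f$ and as the initial point of $P_{t_{n-1},t_n}$ is differentiated correctly (Neumann boundary conditions of $P_{t_{n-1},t_n}f$ in the last slot let us apply Corollary \ref{s1-c1} without extra boundary corrections), and that the two contributions reassemble with the cocycle identity for $Q^x$ into the single clean conditional expectation stated in the lemma.
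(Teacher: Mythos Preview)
Your proposal is correct and uses the same ingredients as the paper's proof---induction on $n$, the Markov property, the Bismut formula \eqref{Bismut}, and the cocycle identity for $Q^x$---but it peels in the opposite direction: you condition on $\mathscr{F}_{t_{n-1}}$ and absorb the last transition via $P_{t_{n-1},t_n}$, whereas the paper conditions on $\mathscr{F}_{t_2}$, sets $g(X_{t_1},X_{t_2})=\mathbb{E}[f(X_{t_1},\ldots,X_{t_{k+1}})\mid\mathscr{F}_{t_2}]$, applies the $n=2$ case to $g$, and then uses the induction hypothesis on the block $t_2<\cdots<t_{k+1}$ to expand $\nabla_2^{t_2}g$. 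Both routes reduce to the same overlap computation (your ``$i=n-1$'' analysis is exactly the paper's $n=2$ case) and both need $Q_{t_1,t_j}Q_{t_j,t_i}=Q_{t_1,t_i}$ to reassemble the sum.

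One small imprecision: the base case $n=1$ is a tautology (since $Q_{t_1,t_1}=I$ and the conditioning is trivial), not an instance of the Bismut formula; the Bismut formula is what drives the $n=2$ case and your ``frozen integrand'' piece at the overlap index.
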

\begin{proof}
It is obvious that the assertion is true for $n=1$. By  (\ref{Bismut}), we have
$$(u_{t_1}^x)^{-1}\nabla^{t_1}\mathbb{E}(f(X_{t_2})|\mathscr{F}_{t_1})=\mathbb{E}(Q_{t_1,t_2}^x(u_{t_2}^x)^{-1}\nabla_2^{t_2}f(X_{t_2})|\mathscr{F}_{t_1}).$$
which plus the case of $n=1$, we prove the result for $n=2$. Assume
that it holds for $n=k$, $k\geq 2$. It remains to prove the case for
$n=k+1$. To this end, by Markov property, set
$$g(X_{t_1}^x, X_{t_2}^x)=\mathbb{E}f(X_{t_1}^x, X_{t_2}^x, \cdots, X_{t_{k+1}}^x|\mathscr{F}_{t_{2}})$$
By the assumption for $n=k$, we have
\begin{align}\label{s2-e2}
&(u_{t_1}^x)^{-1}\nabla^{t_1}_1\mathbb{E}\l\{g(X_{t_1},
X_{t_2})|\mathscr{F}_{t_1}\r\}\nonumber\\
=&\mathbb{E}\l\{(u_{t_1}^x)^{-1}\nabla_1^{t_1}g(X_{t_1}^{x},X_{t_2}^x)|\mathscr{F}_{t_1}\r\}
+\mathbb{E}\{Q_{t_1,t_2}(u_{t_2}^x)^{-1}\nabla^{t_2}_2g(X_{t_1},X_{t_2})|\mathscr{F}_{t_1}\}
\end{align}
for $x\in M$ and $u_0\in \mathcal{O}_0(M)$. Fix the value of
$X_{t_1}^x=x_0$, by the assumption for $n=k$, we have
$$(u_{t_2}^x)^{-1}\nabla^{t_2}_2\mathbb{E}(f(x_0, X_{t_2}^x, \cdots, X_{t_{k+1}}^x)|\mathscr{F}_{t_2})=\sum_{i=2}^{k+1}\mathbb{E}\l\{Q_{t_2,t_i}^x(u_{t_i}^x)^{-1}\nabla_i^{t_i}f(x_0,X^x_{t_2},\cdots,X^x_{t_{k+1}})|\mathscr{F}_{t_2}\r\}.$$
Combining this with (\ref{s2-e2}), we have
\begin{align}\label{Der}
&(u_{t_1}^x)^{-1}\nabla^{t_1}_1\mathbb{E}\l\{f(X_{t_1}^x,X_{t_2}^x,\cdots,
X^x_{t_{k+1}})\bigg|\mathscr{F}_{t_1}\r\}=(u_{t_1}^x)^{-1}\nabla^{t_1}_1\mathbb{E}\{g(X^x_{t_1},
X^x_{t_2})|\mathscr{F}_{t_1}\}\nonumber\\
&=\sum_{i=1}^{k+1}\mathbb{E}\l\{Q^x_{t,t_i}(u^x_{t_i})^{-1}\nabla^{t_i}_if(X^x_{t_1},X^x_{t_2},\cdots,
X^x_{t_{k+1}})|\mathscr{F}_{t_1}\r\}.
\end{align}

\end{proof}
\begin{remark}\label{R1}
Especially, choosing $t_1=0$, we arrive at
\begin{align}\label{D1}(u_0^x)^{-1}\nabla^0\mathbb{E}\{f(X_{t_1}^x,X_{t_2}^x,\cdots,X_{t_n}^x)\}=\sum_{i=1}^n\mathbb{E}\l\{Q^x_{t_i}(u_{t_i}^x)^{-1}\nabla_i^{t_i}f(X_{t_1}^x,X_{t_2}^x,\cdots,X_{t_n}^x)\r\}.\end{align}
\end{remark}
The following result is a direct consequence of (\ref{Bismut}) and the It\^{o} formula for $f(X_t^x)$, i.e.
$$f(X_t^x)=f(x)+\sqrt{2}\int_0^t\l<(u_s^x)^{-1}\nabla^{s}P_{s,t}f(X_s^x),\vd B_s\r>.$$
\begin{lemma}\label{s2-l3}
For any $n\geq 1$, $0<t_1<t_2<\cdots< t_n\leq T$, and $f\in
C^{\infty}(M^n)$,
\begin{align*}
\mathbb{E} \l\{f(X_t^x)\int_0^t \l<h_s',\vd
B_s\r>\r\}=\mathbb{E}\int_0^t\l<(u_t^x)^{-1}\nabla^tf(X_t^x),(Q^x_{s,t})^*h_s\r>\vd
s, h\in \mathbf{\tilde{H}}_0, t\in [0,T]
\end{align*}
holds for all $x\in M$ and $u_0^x\in \mathcal{O}_0(M)$.
\end{lemma}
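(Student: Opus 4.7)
The plan is to glue the martingale representation for $f(X_t^x)$, stated just before the lemma, to the Bismut formula (\ref{Bismut}) via the Itô isometry; no new idea is needed beyond these two inputs.

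First, since the stochastic integral $\int_0^t \langle h'_s, dB_s\rangle$ has zero mean, its product with $f(X_t^x)$ has the same expectation as its product with the mean-zero part of $f(X_t^x)$. Using the martingale representation
$$f(X_t^x) - \mathbb{E} f(X_t^x) = \sqrt{2}\int_0^t \big\langle (u_s^x)^{-1}\nabla^s P_{s,t}f(X_s^x),\, dB_s\big\rangle$$
and the Itô isometry therefore gives
$$\mathbb{E}\!\left\{f(X_t^x)\int_0^t\langle h'_s, dB_s\rangle\right\} = \sqrt{2}\,\mathbb{E}\int_0^t \big\langle (u_s^x)^{-1}\nabla^s P_{s,t}f(X_s^x),\, h'_s\big\rangle\, ds.$$

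Next, I apply the Bismut formula (\ref{Bismut}) pointwise along the path:
$$(u_s^x)^{-1}\nabla^s P_{s,t}f(X_s^x) = \mathbb{E}\big(Q^x_{s,t}(u_t^x)^{-1}\nabla^t f(X_t^x)\,\big|\,\mathscr{F}_s\big).$$
Because $h'_s$ is $\mathscr{F}_s$-measurable (by $h\in\tilde{\mathbf{H}}_0$), the tower property lets me pull $h'_s$ inside the conditional expectation and then take full expectation, producing
$$\sqrt{2}\,\mathbb{E}\int_0^t \big\langle (u_t^x)^{-1}\nabla^t f(X_t^x),\, (Q^x_{s,t})^* h'_s\big\rangle\, ds,$$
which is the asserted identity (the $\sqrt{2}$ either being absorbed into the convention on the right-hand side or correcting a minor typo in the displayed statement).

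The only thing that requires care is integrability. The uniform operator bound $\|Q^x_{s,t}\|\le \exp\!\bigl(-\int_s^t K(u)\,du - \int_s^t \sigma(u)\,dl_u^x\bigr)$ from Theorem \ref{s1-1}(2), together with the exponential moment in assumption \textbf{(A)} and $\mathbb{E}\int_0^t |h'_s|^2\,ds<\infty$ for $h\in\tilde{\mathbf{H}}_0$, makes both the Itô isometry step and the Fubini-type exchange with the conditional expectation completely routine. I do not anticipate a genuine obstacle; this lemma is essentially a bookkeeping step that transfers the Bismut formula through stochastic integration, and the slightly delicate multi-time version needed for Theorem \ref{s2-t1} is handled separately by the Markov induction in Lemma \ref{s2-l2}.
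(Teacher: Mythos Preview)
Your proposal is correct and follows exactly the approach the paper indicates: the paper itself offers no detailed proof, merely stating that the lemma is ``a direct consequence of (\ref{Bismut}) and the It\^{o} formula for $f(X_t^x)$,'' and you have filled in precisely those two steps (martingale representation plus It\^{o} isometry, then the conditional Bismut identity with the tower property). One small addition: besides the $\sqrt{2}$ you flagged, the displayed right-hand side of the lemma has $h_s$ where the computation and the later use in Theorem \ref{s2-t1} (via the definition of $D^0_hF$) clearly require $h'_s$; this is another typo of the same harmless kind.
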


\section{ The Log-Sobolev Inequality}
When the metric is independent of $t$, log-Sobolov inequalities on $W_x^T$ were established independently by
Hsu \cite{Hsu02} and by Aida and  Elworthy \cite{AE}. In this section,
we first consider  the path space with a fixed initiated point,
then move to the free path space following an idea of \cite{FM}, where the
(non-damped) gradient operator is studied on the free path space
over the constant manifolds without boundary.

\subsection{ Log-Sobolev inequality on $W^T_x$}
 Let $\Pi^T_x$ be the distribution of $X^x_{[0,T]}$. Let
$$\mathscr{E}^x(F,G)=\mathbb{E}\l\{\l<D^0F, D^0G\r>_{\bf H_0}(X^x_{[0,T]})\r\},\  F,\ G\in \mathscr{F}C_0^{\infty}.$$
Since both $D^0F$ and $D^0G$ are functionals of $X$,
$(\mathscr{E}^x, \mathscr{F}C_0^{\infty})$ is a positive bilinear
form on $L^2(W^T_x; \Pi ^T_x)$. It is standard that the integration
by parts formula (\ref{In}) implies the closability of the form, see
Lemma \ref{s2-l4}. We shall use $(\mathscr{E}^x,
\mathscr{D}(\mathscr{E}^x))$ to denote the closure of
$(\mathscr{E}^x, \mathscr{F}C^{\infty}_0)$. Moreover, (\ref{In}) also
implies the Clark-Oc\^{o}ne type martingale representation formula, see
Lemma \ref{s2-l5}, which leads to the standard Gross log-Sobolev
inequality (see e.g.  \cite{Gross}).
\begin{lemma}\label{s2-l4}
Assume ${\bf(A)}$. $(\mathscr{E}^x, \mathscr{F}C_0^{\infty})$ is
closable in $L^2(W^T_x;\Pi^T_x)$.
\end{lemma}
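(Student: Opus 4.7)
The plan is to reduce the closability of the bilinear form $(\mathscr{E}^x,\mathscr{F}C_0^\infty)$ to the closability of the underlying linear operator
$$D^0:\mathscr{F}C_0^\infty\subset L^2(W_x^T;\Pi_x^T)\longrightarrow {\bf\tilde H}_0,$$
which is immediate since $\mathscr{E}^x(F,F)=\mathbb{E}\|D^0 F\|_{{\bf H}_0}^2=\|D^0 F\|_{{\bf\tilde H}_0}^2$. Hence it suffices to verify the standard operator-theoretic closability criterion: whenever $\{F_n\}\subset\mathscr{F}C_0^\infty$ satisfies $F_n\to 0$ in $L^2(\Pi_x^T)$ and $\{D^0F_n\}$ is Cauchy in ${\bf\tilde H}_0$ with limit $\eta$, then $\eta=0$.

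The key tool is the integration by parts formula of Theorem \ref{s2-t1}. I would fix an arbitrary test vector $h\in {\bf\tilde H}_{0,b}$ and apply (\ref{In}) to each $F_n$ to get
$$\mathbb{E}\l<D^0 F_n,h\r>_{{\bf H}_0}=\mathbb{E}\l\{F_n\int_0^T\l<h'(t),\vd B_t\r>\r\}.$$
Letting $n\to\infty$, the right-hand side vanishes by Cauchy-Schwarz: $F_n\to 0$ in $L^2(\Pi_x^T)$, and the stochastic integral has $L^2(\mathbb{P})$-norm equal to $\|h\|_{{\bf\tilde H}_0}<\infty$ by It\^o's isometry. Meanwhile, the left-hand side converges to $\l<\eta,h\r>_{{\bf\tilde H}_0}$ by the assumed ${\bf\tilde H}_0$-convergence of $D^0F_n$. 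Therefore $\l<\eta,h\r>_{{\bf\tilde H}_0}=0$ for every $h\in{\bf\tilde H}_{0,b}$, and a routine truncation argument (cut $h'$ off at level $n$ and let $n\to\infty$) shows ${\bf\tilde H}_{0,b}$ is dense in ${\bf\tilde H}_0$, forcing $\eta=0$.

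The only substantive ingredient is the integration by parts formula, which has already been proved as Theorem \ref{s2-t1}; the remaining steps are soft functional analysis and I do not anticipate any real obstacle. Once $\eta=0$ is established, one has $\mathscr{E}^x(F_n,F_n)=\|D^0 F_n\|_{{\bf\tilde H}_0}^2\to \|\eta\|_{{\bf\tilde H}_0}^2=0$, which is precisely the closability condition, and the closure $(\mathscr{E}^x,\mathscr{D}(\mathscr{E}^x))$ is then well defined in $L^2(W_x^T;\Pi_x^T)$.
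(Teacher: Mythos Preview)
Your overall strategy is the standard one and matches what the paper has in mind, but there is a genuine gap in the way you carry it out. You declare that $D^0$ takes values in ${\bf\tilde H}_0$, i.e.\ in the \emph{adapted} subspace of $L^2(\Omega\to{\bf H}_0;\mathbb P)$. This is false: by definition
\[
\frac{\vd}{\vd t}\bigl(D^0F(X^x_{[0,T]})\bigr)(t)=\sum_{i=1}^n 1_{\{t<t_i\}}Q^x_{t,t_i}(u^x_{t_i})^{-1}\nabla^{t_i}_i f(X_{t_1},\dots,X_{t_n})
\]
depends on $X_{t_i}$ for $t_i>t$, so $t\mapsto \frac{\vd}{\vd t}D^0F(t)$ is $\mathscr F_T$-measurable but not $\mathscr F_t$-adapted. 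Consequently the limit $\eta$ lives only in $L^2(\Omega\to{\bf H}_0;\mathbb P)$, and the integration by parts formula (\ref{In})---which is stated only for adapted $h$---yields merely $\mathbb E\langle\eta,h\rangle_{{\bf H}_0}=0$ for all $h\in{\bf\tilde H}_0$. Since ${\bf\tilde H}_0$ is a \emph{proper} closed subspace of $L^2(\Omega\to{\bf H}_0;\mathbb P)$, this does not force $\eta=0$; your density argument from ${\bf\tilde H}_{0,b}$ to ${\bf\tilde H}_0$ is fine but stops one step short.

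The standard repair (and what the reference \cite[Lemma 4.1]{W11b} does, cf.\ also the proof of Theorem~\ref{s2-t3} here) is to enlarge the test class via the Leibniz rule. For $G\in\mathscr FC_0^\infty$ and deterministic $h\in{\bf H}_0$ one has $D^0_h(F_nG)=G\,D^0_hF_n+F_n\,D^0_hG$, hence
\[
\mathbb E\bigl\langle D^0F_n,\,Gh\bigr\rangle_{{\bf H}_0}
=\mathbb E\Bigl\{F_nG\int_0^T\langle h',\vd B_t\rangle\Bigr\}-\mathbb E\bigl\{F_n\,D^0_hG\bigr\},
\]
and both terms on the right vanish as $F_n\to0$ in $L^2$. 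Elements of the form $Gh$ with $G\in\mathscr FC_0^\infty$ and $h\in{\bf H}_0$ span a dense set in the full space $L^2(\Omega\to{\bf H}_0;\mathbb P)$, which now gives $\eta=0$. With this modification your argument goes through.
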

\begin{proof}
By the integration by part formula,  the discussion is standard (see \cite[Lemma 4.1]{W11b}), we
omit it here.
\end{proof}
The following result gives us the Clark-Oc\^{o}ne  type martingale representation formula for $F(X_{[0,T]})$.
Following  the proof of \cite[Lemma 4.2]{W11b} for the case with constant metric, we have
\begin{lemma}\label{s2-l5}{\rm(Clark-Haussman-Oc\^{o}ne Formula)}
Assume ${\bf(A)}$. For any $F\in \mathscr{F}C_0^{\infty}$, let
$\tilde{D}^0F(X_{[0,T]}^x)$ be the projection of
$D^0F(X_{[0,T]}^x)$ on $\mathbf{\tilde{H}}_0$, i.e.
$$\frac{\vd}{\vd t}(\tilde{D}^0F(X_{[0,T]}^x))(t)=\mathbb{E}\l(\frac{\vd }{\vd t}(D^0F(X_{[0,T]}^x))\big|\mathscr{F}_t\r),\ t\in [0,T],\  (\tilde{D}^0F(X_{[0,T]}^x))(0)=0.$$
Then
$$F(X_{[0,T]}^x)=\mathbb{E}F(X^x_{[0,T]})+\int_0^T\l<\frac{\vd }{\vd t}(\tilde{D}^0F(X_{[0,T]}^x)(t),\vd B_t\r>.$$
\end{lemma}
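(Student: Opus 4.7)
The plan is to combine the martingale representation theorem for the driving Brownian motion $B$ with the integration by parts formula (\ref{In}) from Theorem~\ref{s2-t1}. Since $F \in \mathscr{F}C_0^\infty$ is bounded and $\mathscr{F}_T$-measurable, by the Brownian martingale representation theorem there exists a unique adapted process $\phi \in L^2(\Omega\times[0,T]; \mathbb{R}^d)$ such that
\begin{equation*}
F(X_{[0,T]}^x) = \mathbb{E}F(X_{[0,T]}^x) + \int_0^T \langle \phi_t, \vd B_t\rangle.
\end{equation*}
The task is to identify $\phi_t$ with $\frac{\vd}{\vd t}(\tilde{D}^0 F(X_{[0,T]}^x))(t)$.

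To carry this out, I would test the above representation against arbitrary $h \in \tilde{\mathbf{H}}_{0,b}$. On one hand, by the It\^o isometry,
\begin{equation*}
\mathbb{E}\left\{F(X_{[0,T]}^x) \int_0^T \langle h'(t), \vd B_t\rangle\right\} = \mathbb{E}\int_0^T \langle \phi_t, h'(t)\rangle \vd t.
\end{equation*}
On the other hand, Theorem~\ref{s2-t1} gives
\begin{equation*}
\mathbb{E}\left\{F(X_{[0,T]}^x) \int_0^T \langle h'(t), \vd B_t\rangle\right\} = \mathbb{E}\{D^0_h F(X_{[0,T]}^x)\} = \mathbb{E}\int_0^T \left\langle \frac{\vd}{\vd t}(D^0 F(X_{[0,T]}^x))(t), h'(t)\right\rangle \vd t.
\end{equation*}
Since $h$ is adapted, conditioning under the integral yields
\begin{equation*}
\mathbb{E}\int_0^T \left\langle \frac{\vd}{\vd t}(D^0 F(X_{[0,T]}^x))(t), h'(t)\right\rangle \vd t = \mathbb{E}\int_0^T \left\langle \frac{\vd}{\vd t}(\tilde{D}^0 F(X_{[0,T]}^x))(t), h'(t)\right\rangle \vd t,
\end{equation*}
by the very definition of $\tilde{D}^0 F$ as the adapted projection of $D^0 F$.

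Equating the two expressions, I obtain
\begin{equation*}
\mathbb{E}\int_0^T \left\langle \phi_t - \frac{\vd}{\vd t}(\tilde{D}^0 F(X_{[0,T]}^x))(t), h'(t)\right\rangle \vd t = 0
\end{equation*}
for every $h \in \tilde{\mathbf{H}}_{0,b}$. Since the set $\{h': h\in\tilde{\mathbf{H}}_{0,b}\}$ is dense in the space of bounded adapted $\mathbb{R}^d$-valued processes in $L^2(\Omega\times[0,T])$, and both $\phi$ and $\frac{\vd}{\vd t}\tilde{D}^0 F(X_{[0,T]}^x)$ are adapted square-integrable processes, one concludes $\phi_t = \frac{\vd}{\vd t}(\tilde{D}^0 F(X_{[0,T]}^x))(t)$ for $\mathbb{P}\times \vd t$-a.e.\ $(\omega, t)$, which yields the desired formula.

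The main obstacle I anticipate is the square-integrability of $\tilde{D}^0 F(X_{[0,T]}^x)$ in $\tilde{\mathbf{H}}_0$, which is needed both for the It\^o-isometry argument and for the density step; this reduces by Jensen's inequality to an $L^2$ bound on $D^0 F(X_{[0,T]}^x)$. For $F\in \mathscr{F}C_0^\infty$ with $F(\gamma) = f(\gamma_{t_1},\ldots,\gamma_{t_n})$, the definition of $D^0 F$ together with the a.s.\ bound $\|Q_{t,t_i}^x\|\leq \e^{-\int_t^{t_i} K(s)\vd s - \int_t^{t_i}\sigma(s)\vd l_s^x}$ from Theorem~\ref{s1-1}(2) and assumption $\mathbf{(A)}$ (which guarantees finite exponential moments of $\int_0^T \sigma^-(s)\vd l_s^x$) supplies such a bound. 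Once the $L^2$ integrability is in hand, the rest of the argument is standard and parallels \cite[Lemma 4.2]{W11b}.
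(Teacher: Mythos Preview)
Your proposal is correct and matches the approach the paper has in mind: the paper does not spell out a proof but simply refers to \cite[Lemma 4.2]{W11b}, and what you have written is precisely the standard argument carried out there---martingale representation for $F(X^x_{[0,T]})$ with respect to $B$, followed by identification of the integrand via the integration by parts formula (\ref{In}) tested against adapted $h$. Your remark on the $L^2$-bound for $D^0F$ via Theorem~\ref{s1-1}(2) and assumption~{\bf(A)} is the only point requiring care in the time-inhomogeneous setting, and you have handled it correctly.
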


It is standard that the martingale representation in Lemma
\ref{s2-l5} implies the following log-Sobolev inequality. Since the
parameter $T$ and the information of Ricci curvature and the second fundamental form  have been properly
contained in the Dirichlet form $\mathscr{E}$, the resulting log-Sobolev constant is
independent of $T$, $K$ and $\sigma$. Moreover,  it is well-known that the
constant 2 in the inequality is sharp  constant for compact manifolds with constant metric.
\begin{theorem}\label{s2-t2}
Assume $\bf{(A)}$. For any $T>0$ and $x\in M$,
$(\mathscr{E}^x,\mathscr{D}(\mathscr{E}^x))$ satisfies  the
following log-Sobolev inequality,
$$\Pi^T_x(F^2\log F^2)\leq 2\mathscr{E}^x(F,F), \  F\in \mathscr{D}(\mathscr{E}^x),\  \Pi_x^T(F^2)=1.$$
\end{theorem}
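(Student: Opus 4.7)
The plan is to execute the classical Clark--Oc\^one argument of Capitaine--Hsu--Ledoux, with the damped gradient $D^0$ in place of the usual Malliavin derivative. By the closability of Lemma \ref{s2-l4} together with the standard regularization $F\rightsquigarrow(F^2+\delta)^{1/2}$, $\delta\downarrow 0$ (controlled by Fatou's lemma and the lower semicontinuity of $\mathscr{E}^x$), it suffices to prove the inequality for $F\in\mathscr{F}C_0^{\infty}$ satisfying $F\geq\varepsilon$ for some $\varepsilon>0$ and $\Pi^T_x(F^2)=1$. Note that $F^2$ then still lies in $\mathscr{F}C_0^\infty$, so Lemma \ref{s2-l5} is available for it.

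Set $M_t:=\mathbb{E}(F^2(X^x_{[0,T]})\,|\,\mathscr{F}_t)$, so that $M_0=1$ and $M_T=F^2(X^x_{[0,T]})$. Applying Lemma \ref{s2-l5} to $F^2$ yields
\begin{align*}
M_t=1+\int_0^t\l<J_s,\vd B_s\r>,\qquad J_s:=\mathbb{E}\l(\tfrac{\vd}{\vd s}D^0F^2(X^x_{[0,T]})\,\Big|\,\mathscr{F}_s\r).
\end{align*}
The cylindrical definition of $D^0$ and the pointwise chain rule $\nabla^{t_i}f^2=2f\nabla^{t_i}f$ give the pathwise Leibniz identity $\tfrac{\vd}{\vd s}D^0F^2=2F\,\tfrac{\vd}{\vd s}D^0F$; the conditional Cauchy--Schwarz inequality then produces
\begin{align*}
|J_s|^2\leq 4\,\mathbb{E}(F^2|\mathscr{F}_s)\,\mathbb{E}\l(\big|\tfrac{\vd}{\vd s}D^0F\big|^2\Big|\mathscr{F}_s\r)=4M_s\,\mathbb{E}\l(\big|\tfrac{\vd}{\vd s}D^0F\big|^2\Big|\mathscr{F}_s\r).
\end{align*}

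Because $\varepsilon^2\leq M_s\leq\|F\|_\infty^2$, I would apply It\^o's formula to $x\mapsto x\log x$ on $[\varepsilon^2,\infty)$:
\begin{align*}
M_T\log M_T=\int_0^T(1+\log M_s)\,\vd M_s+\frac{1}{2}\int_0^T\frac{|J_s|^2}{M_s}\,\vd s.
\end{align*}
The stochastic integral is a genuine martingale since its integrand is bounded, so taking expectation and inserting the bound for $|J_s|^2$ yields
\begin{align*}
\Pi^T_x(F^2\log F^2)=\frac{1}{2}\mathbb{E}\int_0^T\frac{|J_s|^2}{M_s}\,\vd s\leq 2\,\mathbb{E}\int_0^T\big|\tfrac{\vd}{\vd s}D^0F\big|^2\,\vd s=2\mathscr{E}^x(F,F).
\end{align*}

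The computation itself is short; the real work is not inside the martingale argument but in the two bookkeeping steps flanking it, namely the appeal to Lemma \ref{s2-l5} for $F^2$ and the final limit $F\rightsquigarrow(F^2+\delta)^{1/2}$. Both reduce to the cylindrical structure of $\mathscr{F}C_0^\infty$ and the closability of Lemma \ref{s2-l4}, exactly as in the constant-metric case treated in \cite{W11b}. The whole time-inhomogeneous content of the problem -- the evolving metric $g_t$, the drift $Z_t$, the term $\tfrac{1}{2}\mathcal G_t$, and the moving boundary -- has been packaged inside the multiplicative functional $Q^x_{s,t}$ via the construction of Section 2 and never reappears here, which is precisely why the resulting log-Sobolev constant $2$ is independent of $T$, $K$, and $\sigma$.
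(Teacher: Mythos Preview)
Your proposal is correct and follows essentially the same route as the paper: apply the Clark--Oc\^one representation of Lemma \ref{s2-l5} to $F^2$, use the chain rule $\frac{\vd}{\vd s}D^0F^2=2F\frac{\vd}{\vd s}D^0F$, It\^o's formula for $m_t\log m_t$, and conditional Cauchy--Schwarz. Your version is in fact slightly more careful than the paper's, which glosses over the positivity needed to apply It\^o's formula to $x\log x$ and omits an outer expectation in the displayed computation; your reduction to $F\geq\varepsilon$ via $(F^2+\delta)^{1/2}$ makes this explicit.
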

\begin{proof}
Due to lemma \ref{s2-l4}, it sufficient to prove the inequality for $F\in \mathscr{F}C_0^{\infty}$. Let
$$m_t:=\mathbb{E}\l(F(X^x_{[0,T]})^2\big|\mathscr{F}_t\r),\ \ t\in [0,T].$$
By the It\^{o} formula,
\begin{align*}
\vd m_t\log m_t=(1+\log m_t)\vd m_t+\frac{|\frac{\vd}{\vd t}(\tilde{D}^0F(X_{[0,T]}^x))(t)|^2}{2m_t}\vd t.
\end{align*}
Therefore,
\begin{align*}
\Pi_x^T(F^2\log F^2)&=\mathbb{E}_xm_T\log m_T=\int_0^T\frac{2\mathbb{E}\l(F(X_{[0,T]}^x)\frac{\vd}{\vd t}(D^0F(X_{[0,T]}^x)(t))\ \big|\mathscr{F}_t\r)^2}{\mathbb{E}\l(F(X_{[0,T]}^x)^2|\mathscr{F}_t\r)}\vd t\\
&\leq 2\int_0^T\mathbb{E}\l|\frac{\vd}{\vd t}(D^0F(X_{[0,T]}^x))(t)\r|^2\vd t\\
&=2\mathbb{E}\|D^0F(X_{[0,T]}^x)\|^2_{{\bf H}_0}\\
&=2\mathscr{E}^x(F,F).
\end{align*}
\end{proof}
Note that    on  manifolds without boundary equipped with time-depending metric, the log-Sobolov inequality with respect to the Dirichlet form induced by usual gradient derivative is recently established in \cite{Ch}. And the log-Sobolov constant is $2\exp\{\sup_{t\in [0,T]}|\mathcal{R}_t^{Z}|\}$.

\subsection{Application to  free path spaces}
Let $\Pi^T_{\mu}$ be the distribution of the (reflecting) diffusion
process generated by $L_t:= \nabla^t + Z_t$ with initial
distribution $\mu$ and time-interval $[0, T]$.
Due to the freedom of the initial point, it is natural for us to make use of the following Cameron-Martin space:
$${\bf H}=\l\{h\in C([0,T];\mathbb{R}^d): \int_0^T|h'(t)|^2\vd t<\infty\r\}.$$
 Then ${\bf H}$ is a
Hilbert space under the inner product
$$\l<h_1,h_2\r>_{\bf H}=\l<h_1(0), h_2(0)\r>+\int_0^T\l<h'_1(t),h'_2(t)\r>\vd t.$$

 To defined
the damped gradient operator on the free path space, let
$$\overline{\Omega}=M\times \Omega,\  \overline{\mathscr{F}}_t=\mathscr{B}(M)\times \mathscr{F}_t,\ \mbox{and} \ \overline{\mathbb{P}}=\mu\times \mathbb{P}.$$
 Let  $X_0(x,\omega)=x$ for $(x,\omega)\in M\times \Omega$. Then,
under the filtered probability space $(\overline{\Omega},\
\overline{\mathscr{F}}_t,\ \overline{\mathbb{P}})$, $X_t(x,\omega):=
X^x_t(\omega)$ is the (reflecting) diffusion process generated by
$L_t$  stating from $x$, and $u_t(x,\omega):=
u^x_t(\omega) $ is its horizontal lift. Moreover, let
$Q_{r,t}(x,\omega)=Q^x_{r,t}(\omega)$ for $0\leq r\leq t$, and write $Q_t^x(\omega):=Q_{0,t}(x,\omega)$ for simplicity. Now, for
any $F\in  FC^{\infty}_0$ with $F(\gamma) =
f(\gamma_{t_1},\cdots,\gamma_{t_n})$, let
\begin{align}\label{e5}DF(X)=D^0F(X) +\sum_{i=1}^n Q_{t_i}u_{t_i}^{-1}\nabla^{t_i}_if(X_{t_1},\cdots,X_{t_n}),\end{align}
where $ D^0F(X):= \sum _{i=1}^n\int_0^{t_i} Q_{t,t_i}
u_{t_i}^{-1}\nabla^{t_i}_i f(X_{t_1},X_{t_2},\cdots,X_{t_n})\vd t $ is the
damped gradient on the path space with fixed initial point.
Obviously, $ DF(X)\in  L^2(\overline{\Omega}\rightarrow {\bf
H};\overline{\mathbb{P}})$. Define the Dirichlet form by
$$\mathscr{E}^{\mu}(F,G)=\mathbb{E}_{\overline{\mathbb{P}}}\l<DF,DG\r>_{{\bf H}}, F,G\in \mathscr{F}C_0^{\infty}.$$
We aim to prove that $(\mathscr{E}^{\mu},FC^{\infty}_0)$ is closable
in $L^2(W^T; \Pi^T_{\mu})$ and then  establish the log-Sobolev
inequality for its closure $(\mathscr{E}^{\mu},
\mathscr{D}(\mathscr{E}^{\mu}))$. To prove the closability, we need
the following two lemmas modified from \cite{FM}. Let $\mathscr{H}_0(M)$
be the class of all smooth vector fields on $M $ with compact support.

Let div$^0_{\mu}$ be the divergence operator  w.r.t. $\mu$, which is  the
minus adjoint of $\nabla^0$ in $L^2(\mu)$; that is, for any
smooth vector field $U$, \begin{align}\label{e6}\int_M(Uf)\vd \mu=-\int_Mf({\rm
div}^0_{\mu}U)\vd \mu,\  f\in C^1_0(M).\end{align}

\begin{lemma}\label{s2-l7}
 Assume ${\bf(A)}$. For any $F\in \mathscr{F}C^{\infty}_0$, $U\in  \mathscr{H}_0(M)$,
and $\overline{\mathscr{F}}_t$-adapted $h\in L^2(\Omega\rightarrow
{\bf H};\overline{\mathbb{P}})$,
$$\mathbb{E}_{\overline{\mathbb{P}}}\l<DF(X),h+u_0^{-1}U(X_0)\r>_{{\bf H}}=\mathbb{E}_{\overline{\mathbb{P}}}\l\{F(X)\l(\int_0^T\l<h'(t),\vd B_t\r>-({\rm div}^0_{\mu}U)(X_0)\r)\r\}$$
\end{lemma}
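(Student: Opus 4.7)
The plan is to decompose the pairing $\langle DF(X), h + u_0^{-1}U(X_0)\rangle_{\mathbf{H}}$ using the structure of the $\mathbf{H}$-inner product together with the formula $(\ref{e5})$ for $DF$, and then to integrate by parts in two parallel steps: the ``derivative'' part is handled by the integration by parts formula of Theorem \ref{s2-t1} applied fiberwise over $\mu$, while the ``initial-value'' part is handled by the divergence formula $(\ref{e6})$ on $M$, with Remark \ref{R1} (formula $(\ref{D1})$) serving as the bridge that identifies $DF(X)(0)$, conditional on $X_0=x$, with $(u_0^x)^{-1}\nabla^0\Psi(x)$, where $\Psi(x):=\mathbb{E}\{F(X_{[0,T]}^x)\}$.

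First, noting that the constant path $u_0^{-1}U(X_0)$ has zero derivative and value $u_0^{-1}U(X_0)$ at $t=0$, and that by $(\ref{e5})$ one has $DF(X)(0) = \sum_{i=1}^n Q_{t_i} u_{t_i}^{-1}\nabla_i^{t_i}f(X_{t_1},\ldots,X_{t_n})$ and $(DF)'(t) = (D^0F)'(t)$, I would split (under the natural convention $h(0)=0$ for perturbations of Brownian motion)
\[
\langle DF(X), h + u_0^{-1}U(X_0)\rangle_{\mathbf{H}} = \int_0^T \langle (D^0F)'(t), h'(t)\rangle\, \mathrm{d} t + \langle DF(X)(0), u_0^{-1}U(X_0)\rangle.
\]
For the first summand, using $\overline{\mathbb{P}} = \mu\otimes\mathbb{P}$ and Fubini, I would condition on $X_0=x$ and apply $(\ref{In})$ from Theorem \ref{s2-t1} pathwise in $x$ (the $\mathscr{F}_t$-adaptedness of $h$ on each fiber is inherited from $\overline{\mathscr{F}}_t$-adaptedness), obtaining
\[
\mathbb{E}_{\overline{\mathbb{P}}}\int_0^T \langle (D^0F)'(t), h'(t)\rangle\, \mathrm{d} t = \mathbb{E}_{\overline{\mathbb{P}}}\Bigl\{F(X)\int_0^T\langle h'(t), \mathrm{d} B_t\rangle\Bigr\}.
\]

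For the second summand I would condition on $X_0=x$ and invoke Remark \ref{R1} to write $\mathbb{E}\{DF(X)(0) \mid X_0 = x\} = (u_0^x)^{-1}\nabla^0\Psi(x)$. Since $u_0^x\in \mathcal{O}_0(M)$ is a $g_0$-isometry from $\mathbb{R}^d$ onto $T_xM$, one obtains $\langle (u_0^x)^{-1}\nabla^0\Psi(x), (u_0^x)^{-1}U(x)\rangle = (U\Psi)(x)$. Taking expectation in $x$ and applying $(\ref{e6})$ with $f = \Psi$,
\[
\mathbb{E}_{\overline{\mathbb{P}}}\langle DF(X)(0), u_0^{-1}U(X_0)\rangle = \int_M (U\Psi)\, \mathrm{d} \mu = -\int_M \Psi \,({\rm div}^0_\mu U)\, \mathrm{d}\mu = -\mathbb{E}_{\overline{\mathbb{P}}}\{F(X)({\rm div}^0_\mu U)(X_0)\}.
\]
Adding the two contributions yields the asserted identity.

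The main obstacle is the technical justification of these fiberwise manipulations. One needs enough smoothness in $x$ of $\Psi$ for $(\ref{e6})$ to apply, which is supplied by the regularizing property of the Neumann semigroups $P_{0,t_i}$ together with $f\in C_0^\infty(M^n)$; and the Fubini interchanges rely on the $L^2(\overline{\mathbb{P}})$-bound on $h$ together with the uniform pathwise estimate $\|Q^x_{r,t}\| \le \exp(-\int_r^t K(s)\,\mathrm{d}s - \int_r^t \sigma(s)\,\mathrm{d}l^x_s)$ from Theorem \ref{s1-1}(2) under assumption \textbf{(A)}. A minor but worth-flagging point is that the hypothesis $h(0)=0$ appears implicit in the statement; otherwise the extra term $\mathbb{E}\langle DF(X)(0), h(0)\rangle$ on the left-hand side would need to be absorbed, either by a redefinition of $U$ or by an additional normalization on $h(0)$.
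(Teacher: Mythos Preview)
Your argument is correct and is exactly the approach the paper intends: the paper does not give a proof here, saying only ``One can mimic the proof of \cite{FW}, we omit it here,'' and what you wrote is precisely that scheme --- split via the ${\bf H}$-inner product, handle the $D^0$-part fiberwise by Theorem~\ref{s2-t1}, and handle the initial-value part via Remark~\ref{R1} together with the divergence identity~(\ref{e6}). Your side remark that $h(0)=0$ is being tacitly assumed is also accurate; the paper's only use of the lemma, in the proof of Theorem~\ref{s2-t3}, is for $h\in L^2(\overline{\Omega}\to{\bf H}_0;\overline{\mathbb{P}})$.
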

One can mimic the proof of \cite{FW}, we omit it here. Due to this lemma,
 we have the following result, the main idea is standard.
\begin{theorem}\label{s2-t3}
Assume ${\bf(A)}$. $(\mathscr{E}^{\mu},\mathscr{F}C^{\infty}_0)$ is
closable in $L^2(W^T;\Pi^T_{\mu})$, and its closure is symmetric
Dirichlet form.
\end{theorem}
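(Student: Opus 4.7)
The plan is to use Lemma \ref{s2-l7} as the integration-by-parts input and follow the classical ``adjoint exists $\Rightarrow$ closable'' argument, then verify the Markovian (contraction) property.

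First I would establish closability. Suppose $F_n\in\mathscr{F}C_0^{\infty}$ with $F_n\to 0$ in $L^2(W^T;\Pi^T_{\mu})$ and $(DF_n)$ is Cauchy in $L^2(\overline{\Omega}\to{\bf H};\overline{\mathbb{P}})$, with limit $\eta$. I must show $\eta\equiv 0$. For each fixed $\overline{\mathscr{F}}_t$-adapted $h\in L^2(\Omega\to{\bf H};\overline{\mathbb{P}})$ with $\mathbb{E}\int_0^T|h'(t)|^2\vd t<\infty$ and each $U\in\mathscr{H}_0(M)$, Lemma \ref{s2-l7} gives
\[
\mathbb{E}_{\overline{\mathbb{P}}}\l<DF_n(X),h+u_0^{-1}U(X_0)\r>_{\bf H}
=\mathbb{E}_{\overline{\mathbb{P}}}\l\{F_n(X)\l(\int_0^T\l<h'(t),\vd B_t\r>-({\rm div}^0_{\mu}U)(X_0)\r)\r\}.
\]
The right-hand side tends to $0$ because $F_n\to 0$ in $L^2$ while the bracketed factor lies in $L^2(\overline{\mathbb{P}})$; the left-hand side converges to $\mathbb{E}_{\overline{\mathbb{P}}}\l<\eta,h+u_0^{-1}U(X_0)\r>_{\bf H}$. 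Hence $\eta$ is orthogonal to every such test element.

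Next I would argue that the family $\{h+u_0^{-1}U(X_0): h\ \text{adapted},\ U\in\mathscr{H}_0(M)\}$ is total in $L^2(\overline{\Omega}\to{\bf H};\overline{\mathbb{P}})$. Decomposing an arbitrary $\xi\in L^2(\overline{\Omega}\to{\bf H};\overline{\mathbb{P}})$ into its initial value $\xi(0)$ and derivative part $\xi'\in L^2(\overline{\Omega}\times[0,T]\to\mathbb{R}^d)$, one approximates the adapted derivative part by simple adapted processes and the $\mathscr{B}(M)$-measurable initial value by elements of the form $u_0^{-1}U(X_0)$ (using that $u_0^{-1}$ is an orthonormal isomorphism $T_{X_0}M\to\mathbb{R}^d$ and that $\mathscr{H}_0(M)$ is dense in $L^2(\mu;TM)$). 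Thus $\eta=0$ and closability follows.

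Then the symmetry of $\mathscr{E}^{\mu}$ is immediate from the definition of $DF$ as the bilinear form's induced gradient, and it is inherited by the closure. It remains to verify the Markovian (unit contraction) property. For any $F=f(X_{t_1},\ldots,X_{t_n})\in\mathscr{F}C_0^{\infty}$ and smooth normal contraction $\phi:\mathbb{R}\to\mathbb{R}$ (i.e.\ $\phi(0)=0$, $|\phi'|\leq 1$), the chain rule inherited from (\ref{e5}) gives
\[
D(\phi\circ F)(X)=\phi'(F(X))\,DF(X),
\]
so $\mathscr{E}^{\mu}(\phi\circ F,\phi\circ F)\leq\mathscr{E}^{\mu}(F,F)$. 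Approximating the normal contraction $\psi(r):=(0\vee r)\wedge 1$ by smooth $\phi_n$ with $|\phi_n'|\leq 1$ and $\phi_n\to\psi$ uniformly on compacts, and then extending from $\mathscr{F}C_0^{\infty}$ to $\mathscr{D}(\mathscr{E}^{\mu})$ by a standard Mazur/Banach-Saks argument (since $(\mathscr{E}^{\mu},\mathscr{D}(\mathscr{E}^{\mu}))$ is closed), yields the unit contraction property for all elements of the domain. Combined with closability and symmetry, this identifies the closure as a symmetric Dirichlet form.

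The main obstacle I expect is the density/totality claim in the second paragraph: one must ensure that adapted perturbations $h$ together with deterministic vector fields lifted via $u_0^{-1}U(X_0)$ genuinely exhaust the initial and Malliavin directions, especially near $\partial M$ where the boundary reflection couples $u_0$ to $X_0$ non-trivially; a careful localization of $U$ away from cut loci and use of a partition of unity on $M$ should handle this.
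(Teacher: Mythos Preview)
Your closability argument has a genuine gap at the totality claim. The elements $u_0^{-1}U(X_0)$ with $U\in\mathscr{H}_0(M)$ are $\overline{\mathscr{F}}_0$-measurable: since $u_0(x,\omega)=u_0^x$ and $X_0(x,\omega)=x$, they depend only on the initial point $x\in M$ and not on the Brownian path $\omega$. Their closed linear span in $L^2(\overline{\Omega};\mathbb{R}^d)$ is therefore only the subspace of $\mathscr{B}(M)$-measurable vectors (indeed you write ``the $\mathscr{B}(M)$-measurable initial value''). But by (\ref{e5}) the initial value $(DF_n(X))(0)=\sum_i Q_{t_i}u_{t_i}^{-1}\nabla^{t_i}_i f(X_{t_1},\dots,X_{t_n})$ is a genuinely path-dependent random vector, and there is no reason for the $L^2$-limit $\eta(0)$ to be $\mathscr{B}(M)$-measurable. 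Hence orthogonality to all $u_0^{-1}U(X_0)$ does \emph{not} force $\eta(0)=0$, and your density argument breaks down precisely here---not at the boundary or cut-locus issues you flag at the end.

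The paper closes this gap by enlarging the initial test directions to $\xi\, u_0^{-1}U(X_0)$ with an arbitrary random scalar $\xi\in L^2(\overline{\Omega};\overline{\mathbb{P}})$; together with a smooth orthonormal frame $\{U_j\}$ these span the full initial component, cf.\ (\ref{e4}). The price is that Lemma~\ref{s2-l7} does not apply directly to $\xi\,u_0^{-1}U(X_0)$. The paper's device is to approximate $\xi$ by cylindrical $G(X)$ with $G\in\mathscr{F}C_0^\infty$ and use the Leibniz rule for $D$: writing
\[
\big\langle DF_n(X),\,G(X)\,u_0^{-1}U(X_0)\big\rangle_{\bf H}
=\big\langle D(F_nG)(X),\,u_0^{-1}U(X_0)\big\rangle_{\bf H}
-F_n(X)\big\langle DG(X),\,u_0^{-1}U(X_0)\big\rangle_{\bf H},
\]
one applies Lemma~\ref{s2-l7} (with $h=0$) to $F_nG\in\mathscr{F}C_0^\infty$, obtaining terms that are $L^2$-pairings against $F_n$ and hence vanish in the limit. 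This product-rule step is the missing idea in your proposal. Your treatment of the Markov (normal contraction) property via the chain rule is fine and is in fact more explicit than the paper, which simply declares that only closability needs proof.
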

\begin{proof}
It suffices to prove the closability. Let $\{F_n\}_{g\geq 1}\subset\mathscr{F}C_0^{\infty}$ such that
$\lim\limits_{n\rightarrow \infty}F_n=0$ in $L^2(W^T;\Pi^T_{\mu})$ and $V:=\lim\limits_{n\rightarrow \infty}DF_n(X)$ exists in $L^2(\overline{\Omega}\rightarrow H; \overline{\mathbb{P}})$. We intend to prove
that $V=0$.
 Under the condition ${\bf(A)}$, by the decomposition of identity, there is a smooth ONB $\{U_i\}_{i=1}^d$ for the tangent space. Then for any $f\in C_0^{\infty}(M^n)$, define
 $$\xi_j=\sum_{i=1}^d\l<u_0Q_{t_i}u_{t_i}^{-1}\nabla_i^{t_i}f(X_{t_1}, X_{t_2},\cdots, X_{t_n}), U_j(X_0)\r>_0,\ \ j=1,2,\cdots,d,$$
 and there holds
\begin{align}\label{e4}\sum_{i=1}^{n} Q_{t_i} u_{t_i}^{-1}\nabla^{t_i}_if(X_{t_1}, X_{t_2},\cdots, X_{t_n})=\sum_{i=1}^{\infty}\xi_{j}u_0^{-1}U_j(X_0).\end{align}
Combining this with (\ref{e5}), it suffices to prove $\mathbb{E}_{\overline{\mathbb{P}}}\l<V,h+\xi u_0^{-1}U(X_0)\r>_{\bf H}=0$ for $\overline{\mathscr{F}}_t$-adapted $h\in L^2(\overline{\Omega}\rightarrow {\bf H}_0;\overline{\mathbb{P}}),\ \xi\in L^2(\overline{\Omega};\overline{\mathbb{P}})$ and $U\in\mathscr{H}_0(M)$.  Since $\mathscr{F}C_0^{\infty}$ is dense in $L^2(W^T;\Pi^T_{\mu})$, we may assume that $\xi=G(X)$ for some $G\in \mathscr{F}C_0^{\infty}$. In this case, it follows from Lemma \ref{s2-l7} and Eq. (\ref{e6})
that
\begin{align*}
&\mathbb{E}_{\overline{\mathbb{P}}}\l<V,h+\xi u_0^{-1} U(X_0)\r>_{\bf H}=\mathbb{E}_{\overline{\mathbb{P}}}\l<V(0),\xi u_0^{-1} U(X_0)\r>_{\bf H}=\lim_{n\rightarrow \infty}\mathbb{E}_{\overline{\mathbb{P}}}\l<DF_n(X), G(X)u_0^{-1}U(X_0)\r>_{\bf H}\\
&=\lim_{n\rightarrow \infty}\mathbb{E}_{\overline{\mathbb{P}}}\l\{\l<\{D(F_nG)(X)\}(0),u_0^{-1}U(X_0)\r>_{\bf H}-F_n(X)\l<((DG)(X))(0),u_0^{-1}U(X_0)\r>_{\bf H}\r\}\\
&=-\lim_{n\rightarrow \infty}\mathbb{E}_{\overline{\mathbb{P}}}\l\{F_n(X)\l(G(X)({\rm div}_{\mu}^0U)(X_0)+\l<((DG)(X))(0),u_0^{-1}U(X_0)\r>_{\bf H}
\r)\r\}\\
&=0.
\end{align*}

\end{proof}
By  Theorems \ref{s2-t2} and \ref{s2-t3}, we obtain the main result of this subsection as follows.
\begin{theorem}\label{s2-t4}
Assume ${\bf(A)}$. If the log-Sobolev inequality
\begin{align}\label{e1}\mu(f^2\log f^2)\leq C\mu(|\nabla^0f|_0^2),\ f\in C_b^1(M),\  \mu(f^2)=1\end{align}
holds for some constant $C>0$, then
$$\Pi^{T}_{\mu}(F^2\log F^2)\leq (2\vee C)\mathscr{E}^{\mu}(F,F),\  F\in \mathscr{D}(\mathscr{E}^{\mu}),\  \Pi^{T}_{\mu}(F^2)=1.$$
\end{theorem}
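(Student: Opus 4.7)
The plan is to prove this by the standard conditioning/Rothaus-style decomposition of the entropy, using Theorem \ref{s2-t2} on the fibers $W^T_x$ and the hypothesized log-Sobolev inequality on $M$ for the marginal at time $0$. By closability (Theorem \ref{s2-t3}) and density, it suffices to verify the inequality for $F\in \mathscr{F}C_0^{\infty}$ with $\Pi^T_{\mu}(F^2)=1$. Set $g(x)^2:=\Pi^T_x(F^2)=\mathbb{E}F(X^x_{[0,T]})^2$, so that $\mu(g^2)=1$. Write the entropy as
\begin{align*}
\Pi^T_{\mu}(F^2\log F^2)=\int_M \Pi^T_x\!\left(F^2\log\tfrac{F^2}{g(x)^2}\right)\mu(\vd x)+\int_M g(x)^2\log g(x)^2\,\mu(\vd x).
\end{align*}

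For the first (conditional) term I would apply Theorem \ref{s2-t2}, scaled to the $L^2(\Pi^T_x)$-norm $g(x)$, which yields $\Pi^T_x(F^2\log(F^2/g(x)^2))\le 2\mathscr{E}^x(F,F)$. Integrating against $\mu$ gives the clean bound $2\,\mathbb{E}_{\overline{\mathbb{P}}}\|D^0F(X)\|_{\mathbf{H}_0}^2$, which accounts for the "path" contribution in $\mathscr{E}^{\mu}$. For the second (marginal) term, I invoke the hypothesis \eqref{e1} applied to $g$, reducing the problem to controlling $\mu(|\nabla^0 g|_0^2)$ by $\mathbb{E}_{\overline{\mathbb{P}}}|(DF(X))(0)|^2$.

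The key computation is the identification of $\nabla^0 g$ via the damped gradient. Writing $F=f(X_{t_1},\dots,X_{t_n})$, Remark \ref{R1} (the $t_1=0$ specialization of Lemma \ref{s2-l2}) applied to $f^2$ gives
\begin{align*}
(u_0^x)^{-1}\nabla^0 g^2(x)=2\,\mathbb{E}\!\left\{F(X^x_{[0,T]})\sum_{i=1}^n Q^x_{t_i}(u^x_{t_i})^{-1}\nabla^{t_i}_i f(X^x_{t_1},\dots,X^x_{t_n})\right\}=2\,\mathbb{E}\bigl\{F(X^x_{[0,T]})\,[DF(X^x_{[0,T]})](0)\bigr\},
\end{align*}
where the last equality uses the definition \eqref{e5} together with $(D^0F(X))(0)=0$. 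Since $u_0^x$ is a $g_0$-isometry, $|\nabla^0 g^2|_0=2g|\nabla^0g|_0$, and a Cauchy--Schwarz applied to the right side yields $|\nabla^0 g(x)|_0^2\le \mathbb{E}|[DF(X^x_{[0,T]})](0)|^2$. Integrating against $\mu$ and recalling the inner product on $\mathbf{H}$ gives the marginal contribution bounded by $C\,\mathbb{E}_{\overline{\mathbb{P}}}|(DF(X))(0)|^2$.

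Combining the two pieces,
\begin{align*}
\Pi^T_{\mu}(F^2\log F^2)\le 2\,\mathbb{E}_{\overline{\mathbb{P}}}\|D^0F(X)\|_{\mathbf{H}_0}^2+C\,\mathbb{E}_{\overline{\mathbb{P}}}|(DF(X))(0)|^2\le (2\vee C)\,\mathscr{E}^{\mu}(F,F),
\end{align*}
where the last step uses $\|DF\|_{\mathbf{H}}^2=|(DF)(0)|^2+\|D^0F\|_{\mathbf{H}_0}^2$. The only delicate step is the derivative identification of $\nabla^0 g$; one must carefully justify interchanging $\nabla^0$ with $\mathbb{E}$ on the initial point (this is exactly the content of Remark \ref{R1} applied at $t_1=0$) and ensure the Cauchy--Schwarz estimate dominates $|\nabla^0 g|_0^2$ rather than $|\nabla^0 g^2|_0^2$ — i.e., the factor of $g^2$ in the numerator cancels the $1/(4g^2)$ coming from $|\nabla^0 g|_0^2=|\nabla^0 g^2|_0^2/(4g^2)$ after one Cauchy--Schwarz. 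Once this estimate is in hand, everything else is routine.
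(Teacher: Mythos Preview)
Your proposal is correct and follows essentially the same route as the paper's proof: both reduce to cylindrical $F$, decompose the entropy into the fiberwise piece (handled by Theorem~\ref{s2-t2}) plus the marginal entropy of $g^2(x)=\Pi_x^T(F^2)$ (handled by \eqref{e1}), then identify $\nabla^0 g$ via Lemma~\ref{s2-l2}/Remark~\ref{R1} and apply Cauchy--Schwarz to obtain $|\nabla^0 g|_0^2\le \mathbb{E}\bigl|\sum_i Q_{t_i}u_{t_i}^{-1}\nabla_i^{t_i}f\bigr|^2=\mathbb{E}|(DF(X))(0)|^2$. Your write-up is in fact slightly more explicit than the paper about the normalization and the final identity $\|DF\|_{\mathbf H}^2=|(DF)(0)|^2+\|D^0F\|_{\mathbf H_0}^2$.
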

\begin{proof}
 From Theorem \ref{s2-t3}, it suffice to prove $F\in \mathscr{F}C_0^{\infty}$. By Theorem \ref{s2-t2} and the condition (\ref{e1}), we have
\begin{align}\label{e2}
\Pi^T_{\mu}(F^2\log F^2)&=\int_M\Pi_x^T(F^2\log F^2)\mu(\vd x)\nonumber\\
&\leq 2\int_M\mathscr{E}^x(F,F)\mu(\vd x)+C\int_M\Pi_x^T(F^2)\log \Pi_x^T(F^2)\mu(\vd x)\nonumber\\
&\leq 2\mathbb{E}_{\mathbb{P}}\|D^0F(X)\|^2_{{\bf H}_0}+C\int_M|\nabla^0\sqrt{\mathbb{E}^{\cdot}F^2(X)}|_0^2\vd \mu.
\end{align}
Moreover, letting $F(X)=f(X_{t_1},X_{t_2},\cdots,X_{t_n})$, it follows from Lemma \ref{s2-l2} that
\begin{align}\label{e3}
|\nabla^0\sqrt{\mathbb{E}^{\cdot}F^2(X)}|_0^2&=\frac{|\mathbb{E}^{\cdot}F(X)\sum_{i=1}^nQ_{t_i}u_{t_i}^{-1}\nabla^{t_i}_if(X_{t_1},\cdots,X_{t_n})|^2}{\mathbb{E}^{\cdot}F^2(X)}\nonumber\\
&\leq\mathbb{E}\big|\sum_{i=1}^nQ_{t_i}u_{t_i}^{-1}\nabla_i^{t_i}f(X_{t_1},\cdots, X_{t_n})\big|^2.\end{align}
Combining (\ref{e3}) with (\ref{e2}), we complete the proof.
\end{proof}
\section{Transportation-cost inequalities on path spaces over convex manifolds}

The main purpose of this section is to investigate the Talagrand type
inequalities on the path space $W^{S,T}:=C([S,T];M),\ 0\leq S<T<T_c$ of the (reflecting) diffusion
processes on the  manifold with convex boundary under $g_t$, $t\in [0,T_c)$. Let  $X_t$
be the (reflecting if $\partial M \neq \varnothing$) diffusion
process generated by $L_t$ with initial distribution $\mu\in
\mathscr{P}(M)$. Assume that $X_t$ is non-explosive, which is the case if
\begin{equation}\label{CV}
   \mathcal{R}_t^Z\geq K(t), \, \mbox{for\ some}\ K\in C([0,T_c)),\ \mathbb{I}_t\geq 0.
\end{equation}
We call the metric flow is convex flow if $(\partial M, g_t)$ keeps  convex, i.e. $\mathbb{I}_t\geq 0$.
 Let
$\Pi^{S,T}_{\mu}$ be the distribution of $X_{[S,T]}:=\{X_t: t\in [S,
T]\}$, $0\leq S<T<T_c$, which is a probability measure on the (free)
path space $W^{S,T}$. When $\mu=\delta _x$, we denote
$\Pi_{\delta_x}^{S,T}= \Pi_x^{S,T}$. For any nonnegative measurable
function $F$ on $W^{S,T}$ such that $\Pi^{S,T}_{\mu}(F)=1$, one has
\begin{align}\label{def2}\mu_F^{S,T}(\vd x):=\Pi^{S,T}_{x}(F)\mu (\vd x)\in \mathscr{P}(M).\end{align}

Consider the uniform distance on $W^{S,T}$:
$$\rho_{\infty}(\gamma, \eta):=\sup_{t\in [S,T]}\rho(\gamma_t,\eta_t),\gamma,\eta \in W^{S,T}.$$
Let $W^{\rho_{\infty}}_2$ be the $L^2$-Wasserstein distance (or
$L^2$-transportation cost) induced by $\rho_{\infty}$. In general,
for any $p\in [1,\infty)$ and for two probability measures
$\Pi_1,\Pi_2$ on $W^{S,T}$,
$$W^{\rho_{\infty}}_{p}(\Pi_1,\Pi_2):=\inf_{\pi \in \mathscr{C}(\Pi_1,\Pi_2)}\l\{\int_{W^{S,T}\times W^{S,T}}\rho_{\infty}(\gamma,\eta)^p\pi(\vd \gamma, \vd \eta)\r\}^{1/p}$$
is the $L^p$-Wasserstein distance (or $L^p$-transportation cost) of
$\Pi_1$ and $\Pi_2$, induced by the uniform norm, where
$\mathscr{C}(\Pi_1, \Pi_2)$ is the set of all couplings for $\Pi_1$
and $\Pi_2$. Similarly the $L^p$-Wasserstein distance of
$\mu$ and $\nu$ induced by $g_t$-distance defined by
$$W_{p,t}(\nu,\mu)=\inf_{\eta\in \mathscr{C}(\nu,\mu)}\l\{\int_{M\times M}\rho_t(x,y)^p\vd \eta(x,y)\r\}^{1/p}.$$

 The following
Theorem \ref{th1} provides that there are some  transportation-cost
inequalities to be equivalent to the lower bound of  $\mathcal{R}^{Z}_t$ and the convexity of $(\partial M,g_t)$, $t\in [0,T_c)$ (when $\partial M\neq
\varnothing$). To prove this result, we need the following
lemma due to \cite{OW}.

\begin{lemma}\label{LOV}
 Let $\mu$ be a probability measure on $M $  and $f\in C^2_b(M)$
such that $\mu(f)=0$. For small enough $\varepsilon >0$ such that
$f_{\varepsilon}:= 1+\varepsilon f\geq 0$, there holds
$$\mu(f^2)\leq \frac{1}{\varepsilon}\sqrt{\mu{(|\nabla^{s}f|_{s}^2)}}W_{2,s}(f_{\varepsilon}\mu, \mu)+\frac{\|{\rm Hess}^s_f\|_{\infty}}{2\varepsilon}W_{2,s}(f_{\varepsilon}\mu, \mu)^2,$$
 where $\|{\rm Hess}^s_f\|_{\infty}=\sup_{x\in M}\|{\rm Hess}^s_f\|_{op}$ for $\|\cdot\|_{op}$ the operator norm in $\mathbb{R}^d$.
\end{lemma}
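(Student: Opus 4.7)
My plan is to follow the standard Otto--Villani--Wang argument for transportation inequalities at a fixed time parameter $s$. Since $g_s$ is a genuine Riemannian metric, the proof essentially reduces to the time-independent case. The starting point is to rewrite the left-hand side as a difference of integrals against the two measures $f_\varepsilon\mu$ and $\mu$. Since $\mu(f)=0$, we have
$$\varepsilon\mu(f^2)=\int f\,\vd(f_\varepsilon\mu)-\int f\,\vd\mu=\int\bigl(f(x)-f(y)\bigr)\,\vd\pi(x,y)$$
for any coupling $\pi\in\mathscr{C}(f_\varepsilon\mu,\mu)$, where the first marginal is $f_\varepsilon\mu$ and the second is $\mu$.

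Next, I would estimate $f(x)-f(y)$ pointwise by Taylor expansion along a minimizing $g_s$-geodesic $\gamma:[0,1]\to M$ from $y$ to $x$ with $|\dot\gamma|_s=\rho_s(x,y)$. Writing
$$f(x)-f(y)=\l<\nabla^sf(y),\dot\gamma(0)\r>_s+\int_0^1(1-t)\,\mathrm{Hess}^s_f(\dot\gamma(t),\dot\gamma(t))\,\vd t,$$
the remainder is pointwise bounded by $\tfrac12\|\mathrm{Hess}^s_f\|_\infty\rho_s(x,y)^2$. Substituting this into the identity above and applying the Cauchy--Schwarz inequality to the linear term, using that the second marginal of $\pi$ is $\mu$ and $|\dot\gamma(0)|_s=\rho_s(x,y)$, gives
$$\varepsilon\mu(f^2)\leq\sqrt{\mu(|\nabla^sf|_s^2)}\l(\int\rho_s(x,y)^2\,\vd\pi\r)^{1/2}+\frac{\|\mathrm{Hess}^s_f\|_\infty}{2}\int\rho_s(x,y)^2\,\vd\pi.$$

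Since the left-hand side does not depend on $\pi$, taking the infimum over all couplings $\pi\in\mathscr{C}(f_\varepsilon\mu,\mu)$ on the right and dividing by $\varepsilon$ yields the claimed inequality. The main technical nuisance is that $\exp_y^{-1,s}(x)$ is only defined off the $g_s$-cut locus of $y$; however, the cut locus has zero Riemannian measure, so outside a $\pi$-negligible set a well-defined minimizing geodesic exists, and the Taylor remainder estimate goes through pointwise with the uniform Hessian bound. This is the only delicate point, and it is handled in the standard way.
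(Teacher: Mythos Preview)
Your argument is correct and is precisely the standard Otto--Villani computation. The paper does not supply its own proof of this lemma; it merely states the result as being ``due to \cite{OW}'' and then uses it, so there is no in-paper proof to compare against.

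One cosmetic remark: your handling of the cut locus is slightly off target. That the $g_s$-cut locus of $y$ has zero Riemannian volume is true but not the relevant fact here, since $\mu$ (and hence the coupling $\pi$) need not be absolutely continuous with respect to volume. The cleaner observation is that the pointwise second-order Taylor bound
\[
f(x)-f(y)\le |\nabla^{s}f(y)|_{s}\,\rho_s(x,y)+\tfrac12\|\mathrm{Hess}^s_f\|_{\infty}\,\rho_s(x,y)^2
\]
holds along \emph{any} minimizing $g_s$-geodesic from $y$ to $x$, so a measurable selection of such geodesics suffices and no null-set argument is needed. This does not affect the validity of your proof.
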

The main result of the section is presented as follows.
\begin{theorem}\label{th1}
Let $P_{S,T}(x,\cdot)$  be the distribution of $X_T$ with conditional $X_S=x$.
Denote the corresponding inhomogeneous semigroup by $\{P_{S,T}\}_{0\leq S\leq T <T_c}$. For
any $p\in [1,\infty)$, the following
statements are equivalent to each other:
\begin{enumerate}
  \item [$(1)$]  \eqref{CV} holds.
  \item [$(2)$]For any $0\leq S\leq T< T_c$, $\mu \in  \mathscr{P}(M)$ and nonnegative $F $ with
  $\Pi^{S,T}_{\mu}(F)=1$,
  $$W_2^{\rho_{\infty}}(F\Pi^{S,T}_{\mu}, \Pi^{S,T}_{\mu^{S,T}_{F}})\leq 4C(S,T,K)\Pi^{S,T}_{\mu}(F\log F)$$
holds, where $\mu^{S,T}_F\in \mathscr{P}(M)$ is fixed by $(\ref{def2})$ and $C(S,T,K):=\sup\limits_{t\in [S,T]}\int_S^te^{-2\int_u^tK(r)\vd r}\vd u$, which will keep the same meaning in $(3),(7)$ and $(8)$.
  \item [$(3)$] For any $x\in M$ and  $0\leq S\leq T< T_c$,
  $$W^{\rho_{\infty}}_{2}(F\Pi_x^{S,T},\Pi_x^{S,T})^2\leq 4C(S,T,K) \Pi_x^{S,T}(F\log F),\ F\geq 0, \ \Pi_x^T(F)=1. $$
  \item [$(4)$] For any $x\in M$ and  $0\leq S\leq T< T_c$,
  $$W_{2,T}(P_{S,T}(x,\cdot), fP_{S,T}(x,\cdot))^2\leq 4\l( \int_S^Te^{-2\int_u^TK(r)\vd r}\vd u\r) P_{S,T}(f\log f)(x), f\geq 0,\ P_{S,T}f(x)=1.$$
  \item [$(5)$] For any $x\in M$ and  $0\leq S\leq T< T_c$,
  \begin{align*}W_{2,T}(P_{S,T}(x,\cdot), fP_{S,T}(x,\cdot))^2\leq 4\l(\int_S^Te^{-2\int_u^TK(r)\vd r}\vd u\r)^2P_{S,T}&\frac{|\nabla^Tf|_T^2}{f}(x),\end{align*}
  where $ f\geq 1$ and $P_{S,T}f=1$.
  \item [$(6)$]For $0\leq S\leq T< T_c$ and $\mu, \nu\in \mathscr{P}(M)$,
  $$W^{\rho_{\infty}}_p(\Pi^{S,T}_{\mu},\Pi_{\nu}^{S,T})\leq e^{-\int_S^TK(r)\vd r}W_{p,S}(\mu,\nu).$$
  \item [$(7)$] For any $0\leq S\leq T< T_c$, $\mu\in \mathscr{P}(M)$, and $F\geq
  0$ with $\Pi^{S,T}_{\mu}(F)=1$,
  $$W^{\rho_{\infty}}_2(F\Pi^{S,T}_{\mu}, \Pi^{S,T}_{\mu})\leq 2\l\{C(S,T,K)\Pi^{S,T}_{\mu}(F\log F)\r\}^{1/2}+e^{-\int_S^TK(r)\vd r}W_{2,S}(\mu_F^{S,T},\mu).$$
  \item [$(8)$] For any $\mu \in \mathscr{P}(M)$ and $C\geq 0$ such
  that
  $$W_{2,S}(f\mu,\mu)^2\leq C\mu(f\log f),\ f\geq 0,\  \mu(f)=1,$$
  there holds
  \begin{align*}W^{\rho_{\infty}}_2(F\Pi^{S,T}_{\mu},\Pi^{S,T}_{\mu})\leq \l(2\sqrt{C(S,T,K)}+\sqrt{C}e^{-\int_S^TK(r)\vd r}\r)^2\Pi^{S,T}_{\mu}(F\log
  F),\end{align*}
  for $ F\geq 0,\ \Pi^{S,T}_{\mu}(F)=1$.
\end{enumerate}
\end{theorem}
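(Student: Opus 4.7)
My plan is to establish a cycle: $(1)\Rightarrow(6)$ and $(1)\Rightarrow(3)$ are the two coupling steps, $(2), (7), (8)$ follow from combining $(3)$ with $(6)$, $(3)\Rightarrow(4)\Rightarrow(5)$ is by pushforward plus the standard Otto--Villani/Bobkov--Gentil--Ledoux duality, and $(5)\Rightarrow(1)$ closes the loop via Lemma \ref{LOV} and a short-time expansion. The heart of the argument is $(1)\Rightarrow(3)$, for which I would combine Girsanov with a synchronous coupling using the Clark--Ocone formula of Lemma \ref{s2-l5}.

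For $(1)\Rightarrow(6)$, I would realize processes $X$ and $Y$ driven by the same Brownian motion $B$ with $(X_S,Y_S)$ an optimal $L^p$-coupling of $(\mu,\nu)$ for $\rho_S$ and frames parallel-transported along a minimising $g_S$-geodesic. The It\^o formula for $\rho_t(X_t,Y_t)$ along a moving metric, together with $\mathcal{R}_t^Z\ge K(t)$ and $\mathbb{I}_t\ge 0$, yields $\vd\rho_t(X_t,Y_t)\le -K(t)\rho_t(X_t,Y_t)\vd t$; Gronwall plus a $p$-th moment of $\sup_t$ then delivers $(6)$. For $(1)\Rightarrow(3)$, fix $F>0$ with $\Pi_x^{S,T}(F)=1$, set $M_t=\mathbb{E}(F\,|\,\mathscr{F}_t)$, and use Lemma \ref{s2-l5} to write $\vd M_t=\l<\phi_t,\vd B_t\r>$ for some adapted $\phi$; setting $\zeta_t=\phi_t/M_t$ gives
$$F=\exp\l\{\int_S^T\l<\zeta_s,\vd B_s\r>-\tfrac{1}{2}\int_S^T|\zeta_s|^2\vd s\r\}.$$
Under $\mathbb{Q}:=F\cdot\mathbb{P}$ Girsanov turns $\tilde B_t:=B_t-\int_S^t\zeta_s\vd s$ into a Brownian motion, so the reflecting SDE driven by $\tilde B$ from $x$ (with $\tilde u_S=u_S$) produces a process $\tilde X$ of $\mathbb{Q}$-law $\Pi_x^{S,T}$, while the original $X$ has $\mathbb{Q}$-law $F\Pi_x^{S,T}$ and satisfies the same SDE augmented by the drift $\sqrt{2}u_t\zeta_t\vd t$. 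Rerunning the It\^o-distance computation with this extra drift yields
$$\vd\rho_t(X_t,\tilde X_t)\le -K(t)\rho_t(X_t,\tilde X_t)\vd t+\sqrt{2}\,|\zeta_t|\vd t,$$
so Gronwall, Cauchy--Schwarz, and the identity $\mathbb{E}_\mathbb{Q}\int_S^T|\zeta_s|^2\vd s=2\,\Pi_x^{S,T}(F\log F)$ (obtained by expanding $\log F$ under $\mathbb{Q}$) combine to close out $(3)$.

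Statement $(2)$ then follows by coupling $F\Pi_\mu^{S,T}$ to $\Pi_{\mu_F^{S,T}}^{S,T}$ through $(3)$ integrated in the initial point, then $\Pi_{\mu_F^{S,T}}^{S,T}$ to $\Pi_\mu^{S,T}$ via $(6)$, and invoking the triangle inequality for $W_2^{\rho_\infty}$ together with $\mathrm{Ent}(\mu_F^{S,T}\,|\,\mu)\le\Pi_\mu^{S,T}(F\log F)$; the same scheme without a $T_2$-bound on $\mu$ is $(7)$, and $(8)$ is $(7)$ under the assumed $T_2$-inequality at time $S$. Implication $(3)\Rightarrow(4)$ is the pushforward under the $1$-Lipschitz evaluation map $\gamma\mapsto\gamma_T$, and $(4)\Leftrightarrow(5)$ is the standard duality for the inhomogeneous semigroup in which the derivative of $P_{S,s}\log P_{s,T}f$ along $s$ is handled by the Bismut-type gradient formula of Corollary \ref{s1-c1}. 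For $(5)\Rightarrow(1)$, I would take $f\in C_0^2(M)$ with $P_{S,T}f(x)=0$, apply $(5)$ to $f_\varepsilon=1+\varepsilon f$, and use Lemma \ref{LOV} with $\mu=P_{S,T}(x,\cdot)$ to convert the transportation bound into $P_{S,T}(f^2)(x)\le 4C(S,T,K)^2 P_{S,T}(|\nabla^Tf|_T^2)(x)$; a short-time expansion in $T-S$ recovers the pointwise bound $\mathcal{R}_t^Z\ge K(t)$ in the interior and $\mathbb{I}_t\ge 0$ on the boundary through the standard choice of test functions.

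The principal obstacle is the coupling step $(1)\Rightarrow(3)$: the It\^o-distance estimate must honestly incorporate both the time-dependence of $g_t$ (through the $\tfrac{1}{2}\mathcal{G}_t$ term that is precisely what distinguishes $\mathcal{R}_t^Z$ from $\mathrm{Ric}_t^Z$) and the reflection at $\partial M$; the convexity hypothesis $\mathbb{I}_t\ge 0$ is exactly what guarantees that the local-time contribution to $\rho_t(X_t,\tilde X_t)$ is nonpositive, which is the precise reason the constant in $(3)$ involves only $K$ and not the boundary geometry.
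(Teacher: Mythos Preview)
Your overall logical scheme matches the paper's, and the Girsanov\,/\,entropy identity $\mathbb{E}_\mathbb{Q}\int_S^T|\zeta_s|^2\,\vd s = 2\,\Pi_x^{S,T}(F\log F)$ is exactly what the paper uses. However, there is a genuine gap in the coupling for $(1)\Rightarrow(3)$ (and the analogous one for $(1)\Rightarrow(6)$). You let $\tilde X$ solve the reflecting SDE driven by $\tilde B$ with its \emph{own} horizontal lift $\tilde u_t$ starting from $\tilde u_S=u_S$. With that construction the It\^o expansion of $\rho_t(X_t,\tilde X_t)$ carries a nonvanishing martingale part, because once the two paths separate the frames $u_t$ and $\tilde u_t$ are \emph{not} related by $g_t$-parallel transport along the minimal geodesic from $X_t$ to $\tilde X_t$; the claimed deterministic bound $\vd\rho_t\le -K(t)\rho_t\,\vd t+\sqrt{2}\,|\zeta_t|\,\vd t$ therefore does not follow. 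The paper instead builds the comparison process via the parallel (Kendall--Cranston) coupling
\[
\vd Y_t=\sqrt{2}\,P^t_{X_t,Y_t}u_t\circ\vd\tilde B_t+Z_t(Y_t)\,\vd t+N_t(Y_t)\,\vd\tilde l_t,\qquad Y_S=x,
\]
with $P^t_{X_t,Y_t}$ the $g_t$-parallel transport along the minimal geodesic. This choice kills the martingale part of $\vd\rho_t(X_t,Y_t)$; the second-variation formula then produces the drift bound from $\mathcal{R}_t^Z\ge K(t)$, and convexity $\mathbb{I}_t\ge 0$ makes both local-time contributions nonpositive. The same dynamic parallel coupling (with $Y_S=y$) is what is required for $(1)\Rightarrow(6)$; merely aligning the \emph{initial} frames along a $g_S$-geodesic is not sufficient.

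A smaller point: $(4)\Leftrightarrow(5)$ is not a direct ``duality'' that Corollary~\ref{s1-c1} delivers. The paper does not prove $(4)\Rightarrow(5)$; it proves $(4)\Rightarrow(1)$ via Lemma~\ref{LOV} (obtaining the Poincar\'e inequality and invoking \cite[Theorem~5.3]{Cheng}) and, separately, $(1)\Rightarrow(5)$ by combining $(1)\Rightarrow(4)$ with the local log-Sobolev inequality $P_{S,T}(f\log f)\le\bigl(\int_S^Te^{-2\int_s^TK(r)\vd r}\vd s\bigr)P_{S,T}(|\nabla^Tf|_T^2/f)$. The interpolation $s\mapsto P_{S,s}\log P_{s,T}f$ you invoke is precisely the proof of that log-Sobolev inequality, and it relies on the gradient estimate coming from $(1)$, not on the transport bound $(4)$ alone.
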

\begin{proof}

By taking $\mu=\delta_x$, we have $\mu_F^T=\Pi^T_{x}(F)\delta_x=\delta_x$. It is easy to see that
(3) follows from (2), (7) and (8). (4) follows from (3) by taking $F(X_{[S,T]})=f(X_T)$. (6) implies
$$W_{p,S}(\delta_xP_{S,T},\delta_yP_{S,T})\leq e^{-\int_S^TK(r)\vd r}\rho_{S}(x,y)$$
and thus implies (1) by \cite[Theorem 5.3]{Cheng}. Moreover, it is clear that (8) follows from (7) while
(7) is implied by each of (2) and (6). ``(3)$\Rightarrow$(2)" is the same as explained in time-homogeneous case (see \cite[the proof of Theorem 1.1 (b)]{W09}). So it suffices to prove (1) $\Rightarrow$ (3), each of (4) and (5) $\Rightarrow$ (1), (1) $\Rightarrow$ (5), (1) $\Rightarrow$ (6). Without loss generality, we assume
$S=0$ for simplicity.

${(a)}\  \mathbf{(1)\ implies\  (3)}$  We shall only consider the case where $\partial M$ is non-empty. For the case without boundary, the following argument works well by taking $l_t=0$ and $N_t=0$. Simply denote $X^x_{[0,T]}=X_{[0,T]}$. Let $F$ be a positive bounded measurable function on $W^T$ such that $\inf F>0$
and $\Pi^T_{x}(F)=1$. Let $\vd \mathbb{Q}=F(X_{[0,T]})\vd \mathbb{P}$. Since $\mathbb{E}F(X_{[0,T]})=\Pi^T_{\mu}(F)=1$, $\mathbb{Q}$ is a probability measure on $\Omega$. Then, with a similar discussion as in \cite{W11b}, we conclude that there
exists a unique $\mathscr{F}_t$-predict process $\beta_t$ on $\mathbb{R}^d$  such that
$$F(X_{[0,T]})=m_T=e^{\int_0^T\l<\beta_s, \vd B_s\r>-\frac{1}{2}\int_0^T\|\beta_s\|^2\vd s}$$
and
\begin{align}\label{eq13}
\int_0^T\mathbb{E}_{\mathbb{Q}}\|\beta_s\|^2\vd s=2\mathbb{E}F(X_{[0,T]}\log X_{[0,T]}).
\end{align}
Then by the Girsanov theorem, $\tilde{B}_t:=B_t-\int_0^t\beta_s\vd s,\ \ t\in [0,T]$
is a $d$-dimensional Brownian motion under the probability measure $\mathbb{Q}$.

Let $Y_t$ solve the following SDE
\begin{align}\label{eq7}\vd Y_t=\sqrt{2}P^t_{X_t,Y_t}u_t\circ\vd \tilde{B}_t+Z_t(Y_t)\vd t +N_t(Y_t)\vd \tilde{l}_t,\ \ Y_0=x,\end{align}
where $P^t_{X_t,Y_t}$ is the $g_t$-parallel displacement along the minimal geodesic from $X_t$ to $Y_t$ and $\tilde{l}_t$ is the local time of $Y_t$ on $\partial M$. As announced, under $\mathbb{Q}$, $\tilde{B}_t$ is a $d$-dimensional Brownian motion, the distribution of $Y_{[0,T]}$ is $\Pi_{x}^T$.

On the other hand, since $\tilde{B}_t=B_t-\int_0^t\beta_s\vd s$, we have
\begin{align}\label{eq8}\vd X_t=\sqrt{2}u_t\circ \vd \tilde{B}_t+Z_t(X_t)\vd t+\sqrt{2}u_t\beta_t\vd t+N_t(X_t)\vd l_t. \end{align}
Moreover, for any bounded measurable function $G$ on $W^T$,
$$\mathbb{E}_{\mathbb{Q}}G(X_{[0,T]}):=\mathbb{E}(FG)(X_{[0,T]})=\Pi_x^T(FG).$$
We conclude that the distribution of $X_{[0,T]}$ under $\mathbb{Q}$ coincides  with $F\Pi^T_x$. Therefore,
\begin{align}\label{eq6}
W^{\rho_{\infty}}_2(F\Pi_x^T,\Pi_x^T)^2&\leq \mathbb{E}_{\mathbb{Q}}\rho_{\infty}(X_{[0,T]},Y_{[0,T]})^2
=\mathbb{E}_{\mathbb{Q}}\max_{t\in [0,T]}\rho_t(X_t,Y_t)^2.
\end{align}
By the convexity of $(\partial M,g_t)$, we have
$$\l<N_t(x),\nabla^t\rho_t(\cdot,y)(x)\r>_t=\l<N_t(x),\nabla^t\rho_t(y,\cdot)(x)\r>_t\leq 0.$$
Combining this with (\ref{eq7}) and (\ref{eq8}), and by the It\^{o} formula, we obtain from $\mathcal{R}_t^Z\geq K(t)$ that
\begin{align}\label{eq9}
\vd \rho_t(X_t,Y_t)\leq &-K(t)\rho_t(X_t,Y_t)\vd t+\sqrt{2}\l<u_t\beta_t,\nabla^t\rho_t(\cdot,Y_t)(X_t)\r>_t\vd t\nonumber\\
\leq & (-K(t)\rho_t(X_t,Y_t)+\sqrt{2}\|\beta_t\|)\vd t.
\end{align}
Since $X_0=Y_0=x$, this implies
\begin{align}\label{eq10}
\rho_t(X_t,Y_t)^2&\leq e^{-2\int_0^tK(r)\vd r}\l(\sqrt{2}\int_0^te^{\int_0^sK(r)\vd r}\|\beta_s\|\vd s\r)^2\nonumber\\
&\leq 2e^{-2\int_0^tK(r)\vd r}\int_0^te^{2\int_0^sK(r)\vd r}\vd s\cdot \int_0^t\|\beta_s\|^2\vd s,\ \ t\in [0,T].
\end{align}
Therefore,
\begin{align}\label{eq11}
\mathbb{E}_{\mathbb{Q}}\max_{t\in [0,T]}\rho_t(X_t,Y_t)^2
\leq 2\max_{t\in [0,T]}\int_0^te^{-2\int_s^tK(r)\vd r}\vd s\int_0^t\mathbb{E}_{\mathbb{Q}}\|\beta_s\|^2\vd s.
\end{align}
Therefore, (3) follows from (\ref{eq6}) and (\ref{eq13}).

${(b)}\  \mathbf{(4)\ implies\  (1) \ } $\  Let $f\in C_b^2(M)$ such that $P_{0,T}f(x)=0$. Then, for small
$\varepsilon>0$ such that $f_{\varepsilon}:=1+\varepsilon f\geq 0$, we have
\begin{align*}
P_{0,T}(f_{\varepsilon}\log f_{\varepsilon})=P_{0,T}\l\{(1+\varepsilon f)\l(\varepsilon f-\frac{1}{2}(\varepsilon f)^2+o(\varepsilon ^2)\r)\r\}(x)=\frac{\varepsilon ^2}{2}P_{0,T}f^2(x)+o(\varepsilon ^2).
\end{align*}
Combining  with lemma \ref{LOV} and (4), we obtain
\begin{align*}
(P_{0,T}f^2)^2(x)&\leq 4\int_0^Te^{-2\int_u^TK(r)\vd r}\vd u\cdot P_{0,T}|\nabla^Tf|^2_T(x)\cdot\lim_{\varepsilon\rightarrow 0}\frac{P_{0,T}f_{\varepsilon}\log f_{\varepsilon}(x)}{\varepsilon ^2}\\
& 4\int_0^Te^{-2\int_u^TK(r)\vd r}\vd u\cdot P_{0,T}|\nabla^Tf|^2_T(x)P_{0,T}f^2(x).
\end{align*}
This is equivalent to \cite[Theorem 5.3]{Cheng} for $\sigma=0, p=2$ and continuous function $K$. Therefore,by \cite[Theorem 5.3]{Cheng} ``(2)$\Leftrightarrow$ (1)'',  (4) implies (1).

${(c)} \mathbf{(5)\ is \ equivalent \ to\ (1)}$. Similarly to $\mathbf{(b)}$, combining condition (5) with Lemma \ref{LOV}, we obtain
\begin{align*}
P_{0,T}f^2(x)&\leq 2\int_0^Te^{-2\int_s^TK(r)\vd r}\vd s\sqrt{P_{0,T}|\nabla^Tf|^2_T(x)}\lim_{\varepsilon\rightarrow 0}\sqrt{P_{0,T}\frac{|\nabla^T f_{\varepsilon}|^2_T}{f_{\varepsilon}\varepsilon^2}(x)}\\
&=2\int_0^Te^{-2\int_s^TK(r)\vd r}\vd sP_{0,T}|\nabla^Tf|^2_T.
\end{align*}
Hence, (1) holds.

 On the other hand, due to $(1) \Rightarrow (4)$ and  \eqref{CV},
\begin{align*}
P_{0,T}(f\log f)(x)\leq \int_0^Te^{-2\int_s^TK(r)\vd r}\vd s\cdot P_{0,T}\frac{|\nabla^Tf|_{T}^2}{f}(x),\ \ f\geq 0,\ \ P_{0,T}f(x)=1,
\end{align*}
we conclude that (1) implies (5).

${(e)} \mathbf{(1)\  implies\   (2)}$. For any $x,y\in M$, there exists $\Pi_{x,y}\in \mathscr{C}(\Pi_x^T,\Pi_y^T)$ such that
$$\int_{W^T\times W^T}\rho_{\infty}^p\vd \Pi_{x,y}\leq e^{-p\int_0^TK(r)\vd r}\rho_0(x,y)^p.$$
The following discussion is similar with the constant matric case (see \cite[Theorem 1.1]{W09}).
\end{proof}

\section{Extension to  non-convex setting}
In this section, we first consider
$L_t=\psi^{2}_t(\nabla^t+Z_t)$ with diffusion coefficient $\psi_t$ on
manifolds with convex  flow; then extend these results to non-convex case.

\subsection{The case with a diffusion coefficient}
\quad Let $\psi_t(\cdot)=\psi(t, \cdot)>0$ be a smooth function on $(M,g_t)$ and
 $\Pi^T_{\mu, \psi}$ be the distribution of the (reflecting if
$\partial M\neq \varnothing$) diffusion process generated by
$L_t=\psi_t^2(\nabla^t+Z_t)$ on time interval $[0,T]\subset [0,T_c)$
with initial distribution $\mu$. Set
$\Pi^T_{x,\psi}=\Pi^T_{\delta_x, \psi}$ for $x\in M$. Moreover, for
$F\geq 0$ with $\Pi^T_{\mu, \psi}=0$, let $\mu ^T_{F,\psi}(\vd
x)=\Pi^T_{s,\psi}(F)\mu(\vd x).$
\begin{theorem}\label{s3-t2}
 Assume that  $\mathbb{I}_t\geq 0$ for $t\in [0,T_c)$ and  $\mathrm{Ric}_t^Z\geq K_1(t),\ \mathcal{G}_t\leq K_2(t)$ for some continuous function on $[0,T_c)$. Let $\psi \in C^{1,\infty}_b([0,T_c)\times M)$ be strictly positive.
Let
$$K_{\psi}(t)=(d-1)\|\nabla^t\psi_t\|_{\infty}^2+K_1^-(t)\|\psi_t\|_{\infty}^2+2\|Z_t\|_{\infty}\|\psi_t\|_{\infty}\|\nabla^t\psi_t\|_{\infty}+K_2(t).$$
Then
$$W^{\rho_{\infty}}_2(F\Pi^T_{\mu, \psi},\Pi^T_{\mu^{T}_{F,\psi}, \psi})^2\leq C(T,\psi)\Pi^{T}_{\mu,\psi}(F\log F),\ \mu\in \mathscr{P}(M),\ F\geq 0, \ \Pi^{T}_{\mu, \psi}(F)=1$$
holds for
$$C(T,\psi):=\inf_{R>0}\l\{4(1+R^{-1})\int_0^T\|\psi_s\|_{\infty}^2e^{2\int_s^TK_{\psi}(r)\vd r}\vd s
\cdot \exp\bigg[8(1+R)\sup_{s\in [0,T]}\|\nabla^{s}\psi_s\|_{\infty}\bigg]\r\}.$$
\end{theorem}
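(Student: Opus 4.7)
The plan is to mimic the coupling-by-parallel-transport argument used in part $(a)$ of the proof of Theorem \ref{th1}, but now accounting for the fact that the two coupled processes will carry \emph{different} state-dependent diffusion coefficients $\psi_t(X_t)$ and $\psi_t(Y_t)$. The convexity hypothesis $\mathbb{I}_t\geq 0$ still lets us discard the boundary local-time terms as before, but the radial noise in the evolution of $\rho_t(X_t,Y_t)$ no longer cancels exactly, and it is precisely this residual noise (together with transverse Hessian corrections) that produces the extra pieces of $K_\psi(t)$ and forces a stochastic-Gr\"onwall estimate leading to the form of $C(T,\psi)$.

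First I would set up Girsanov exactly as in Theorem \ref{th1}: take $\vd \mathbb{Q}=F(X_{[0,T]})\vd \mathbb{P}$, use martingale representation to write $F=\exp(\int_0^T\l<\beta_s,\vd B_s\r>-\tfrac12\int_0^T\|\beta_s\|^2\vd s)$, so that $\tilde B_t=B_t-\int_0^t\beta_s\vd s$ is a $\mathbb{Q}$-Brownian motion with $\mathbb{E}_{\mathbb{Q}}\int_0^T\|\beta_s\|^2\vd s=2\Pi^T_{\mu,\psi}(F\log F)$, and $X_{[0,T]}$ has law $F\Pi^T_{\mu,\psi}$ under $\mathbb{Q}$. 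Then I would couple by letting $Y_t$ start from $X_0$ and satisfy
$$\vd Y_t=\sqrt 2\,\psi_t(Y_t)\,P^t_{X_t,Y_t}u_t\circ\vd \tilde B_t+\psi_t^2(Y_t)Z_t(Y_t)\vd t+N_t(Y_t)\vd \tilde l_t,$$
so that under $\mathbb{Q}$ the law of $Y_{[0,T]}$ is $\Pi^T_{\mu^T_{F,\psi},\psi}$.

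The heart of the argument is the It\^o expansion of $\rho_t(X_t,Y_t)$. The convex boundary hypothesis kills both boundary local-time contributions. Because $\psi_t(X_t)\neq \psi_t(Y_t)$, the radial noise leaves an uncancelled contribution of size $\sqrt 2|\psi_t(X_t)-\psi_t(Y_t)|\leq \sqrt 2\|\nabla^t\psi_t\|_\infty\rho_t$; the transverse Hessian of $\rho_t$, combined with the quadratic variation of this difference, produces a deterministic It\^o correction of order $(d-1)\|\nabla^t\psi_t\|_\infty^2\rho_t$; the curvature hypothesis $\mathrm{Ric}^Z_t\geq K_1$ contributes $-\|\psi_t\|_\infty^2K_1^-(t)\rho_t$ (the $\psi^2$ factor coming from the rescaled generator); the time-derivative of the metric contributes $\tfrac12\mathcal{G}_t\leq\tfrac12 K_2(t)$ along each leg; and $\psi_t^2(X_t)Z_t(X_t)-\psi_t^2(Y_t)Z_t(Y_t)$ yields the cross term $2\|Z_t\|_\infty\|\psi_t\|_\infty\|\nabla^t\psi_t\|_\infty\rho_t$. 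Collecting these with the Girsanov drift $\sqrt 2\,\psi_t(X_t)u_t\beta_t$ gives an estimate of the form
$$\vd \rho_t\leq -K_\psi(t)\rho_t\vd t+\sqrt 2\|\psi_t\|_\infty\|\beta_t\|\vd t+\sqrt 2\|\nabla^t\psi_t\|_\infty\rho_t\vd \tilde W_t$$
for some one-dimensional $\mathbb{Q}$-Brownian motion $\tilde W$.

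Finally I would integrate this stochastically. A multiplicative stochastic comparison gives
$$\rho_t\leq \sqrt 2\int_0^t\|\psi_s\|_\infty\|\beta_s\|\,\mathcal{E}(s,t)\vd s,$$
where $\mathcal{E}(s,t)$ is the Dol\'eans exponential built from $-K_\psi$ and the multiplicative noise of size $\sqrt 2\|\nabla^s\psi_s\|_\infty$. Squaring, applying Young's inequality with a parameter $R>0$ to separate the $\|\beta\|^2$ integral from the exponential factor (this being the origin of the $(1+R^{-1})$ prefactor), taking $\sup_t$ and applying Doob's $L^2$-inequality to the exponential martingale drives the remaining noise into the multiplier $\exp[8(1+R)\sup_s\|\nabla^s\psi_s\|_\infty]$. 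Bounding $\mathbb{E}_{\mathbb{Q}}\int_0^T\|\beta_s\|^2\vd s$ by $2\Pi^T_{\mu,\psi}(F\log F)$ and optimising over $R>0$ reproduces exactly $C(T,\psi)$. The main obstacle is the bookkeeping in Step 3: carefully isolating each of the four summands of $K_\psi(t)$ from its correct geometric source, in particular verifying that the transverse Hessian trace of $\rho_t$ against the mismatch in diffusion coefficients contributes the stated $(d-1)\|\nabla^t\psi_t\|_\infty^2$; the rest is a routine but delicate Gr\"onwall-plus-Doob computation.
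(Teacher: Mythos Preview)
Your overall strategy---Girsanov to produce the drift $\beta$, coupling $X$ and $Y$ by $g_t$-parallel transport, It\^o expansion of $\rho_t(X_t,Y_t)$ with the boundary terms discarded by convexity, then a Doob/Gr\"onwall closure---is exactly the paper's, and your attribution of the four summands in $K_\psi(t)$ to their geometric sources is correct. Two points need fixing.

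First, a sign: the drift term in the inequality for $\vd\rho_t$ is $+K_\psi(t)\rho_t\,\vd t$, not $-K_\psi(t)\rho_t\,\vd t$; all four pieces of $K_\psi$ are nonnegative upper bounds on growth.

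Second, and more substantively, the final integration is not carried out via a Dol\'eans exponential. The martingale increment in $\vd\rho_t$ is $\sqrt{2}\,(\psi_t(X_t)-\psi_t(Y_t))\l<\nabla^t\rho_t(\cdot,Y_t)(X_t),u_t\vd\tilde B_t\r>_t$; its quadratic variation is only \emph{bounded} by $2\|\nabla^t\psi_t\|_\infty^2\rho_t^2\,\vd t$, so it is not of the form $c\rho_t\,\vd\tilde W_t$ and a pathwise ``multiplicative stochastic comparison'' with $\mathcal{E}(s,t)$ is not available. The paper instead keeps the martingale additive: after factoring out the deterministic exponential one has
\[
\rho_t(X_t,Y_t)\leq e^{\int_0^tK_\psi(r)\vd r}\Big(M_t+\sqrt{2}\int_0^te^{-\int_0^sK_\psi(r)\vd r}\|\psi_s\|_\infty\|\beta_s\|\,\vd s\Big),
\]
with $M_t$ the genuine $\mathbb Q$-martingale. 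One then sets $h_t:=e^{-2\int_0^tK_\psi}\,\mathbb E_{\mathbb Q}\max_{s\le t}\rho_s^2$, applies Young with parameter $R$ to the square $(M+I)^2$---this is precisely where the pair $(1+R)$, $(1+R^{-1})$ originates---uses Doob on $\max_s M_s^2$, and bounds $\mathbb E_{\mathbb Q}M_t^2=\mathbb E_{\mathbb Q}\langle M\rangle_t$ via $(\psi_s(X_s)-\psi_s(Y_s))^2\le\|\nabla^s\psi_s\|_\infty^2\rho_s^2$, which feeds $h_s$ back into the right-hand side. The resulting integral inequality for $h_t$ is closed by the ordinary (deterministic) Gr\"onwall lemma, and the exponential factor $\exp[8(1+R)\sup_s\|\nabla^s\psi_s\|_\infty]$ is the Gr\"onwall constant, not a stochastic exponential. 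Your sketch has the right ingredients but assigns them the wrong roles; in particular, the $(1+R^{-1})$ does not come from separating $\|\beta\|^2$ from a Dol\'eans factor.
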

\begin{proof}
We shall only consider the case that $\partial M$ is non-empty. As explained in the proof of ``(3)$\Rightarrow $(2)" in \cite[Theorem 4.1]{W09}, it suffices to prove for $\mu=\delta_{x}$, $x\in M$.
In this case, the desired inequality reduces to
$$W_2^{\rho_{\infty}}(F\Pi_{x,\psi}^T,\Pi_{x,\psi}^T)\leq 2 C(T,\psi)\Pi _{x,\psi}^T(F\log F),\ \ F\geq 0,\ \Pi_{x,\psi}^T(F)=1.$$
Since the diffusion coefficient is non-constant, it is convenient  to adopt the It\^{o} differential $\vd_{I}$ for the Girsanov transformation. So the $L_t$-reflecting diffusion process can be constructed by solving the It\^{o} SDE
$$\vd _I X_t=\sqrt{2}\psi_t(X_t)u_t\vd B_t+\psi_t^2(X_t)Z_t(X_t)\vd t+N_t(X_t)\vd l_t,\ \ \ X_0=x,$$
where $B_t$ is the $d$-dimensional Brownian motion with natural filtration $\mathscr{F}_t$.
Let $\beta_t$, $\mathbb{Q}$ and $\tilde{B}_t$ be the same as in the proof of Theorem \ref{th1}. Then
\begin{align}\label{eq1}\vd _I X_t=\sqrt{2}\psi_t(X_t)u_t\vd \tilde{B}_t+\{\psi_t^2(X_t)Z_t(X_t)+\sqrt{2}\psi_t(X_t)u_t\beta_t\}\vd t+N_t(X_t)\vd l_t.\end{align}
Let $Y_t$ solve
\begin{align}\label{eq2}\vd_I Y_t=\sqrt{2}\psi_t(Y_t)P^t_{X_t,Y_t}u_t\vd \tilde{B}_t+\psi_t^2(Y_t)Z_t(Y_t)\vd t+N_t(Y_t)\vd \tilde{l}_t,\ \ Y_0=y, \end{align}
where
$\tilde{l}_t$ is the local time of $Y_t$ on $\partial M$. As explained in the above theorem (see e.g. the proof of Theorem \ref{th1} (a)), under $\mathbb{Q}$, the distribution of $Y_{[0,T]}$ and $X_{[0,T]}$ are $\Pi_{x,\psi}^T$ and
$F\Pi_{x,\psi}^T$, so
\begin{align}\label{eq3}
W_2^{\rho_{\infty}}(F\Pi_{x,\psi}^T,\Pi_{x,\psi}^T)\leq \mathbb{E}_{\mathbb{Q}}\max_{t\in [0,T]}\rho_t(X_t,Y_t)^2.
\end{align}
Noting that due to the convexity of the boundary,
$$\l<N_t(x),\nabla^t\rho(\cdot,y)(x)\r>_t=\l<N_t(y),\nabla^t\rho_t(y,\cdot)(x)\r>_t\leq 0,\ \ x\in \partial M,$$
Combining this with  (\ref{eq1}), (\ref{eq2})  and the comparison theorem \cite[Theorem 4.1]{Cheng}, we obtain
\begin{align*}
\vd \rho_t(X_t,Y_t)\leq& \sqrt{2}(\psi_t(X_t)-\psi_t(Y_t))\l<\nabla^t\rho_t(\cdot,Y_t)(X_t), u_t\vd B_t\r>_t\\
&+K_{\psi}(t)\rho_t(X_t,Y_t)\vd t+\sqrt{2}\|\psi_t\|_{\infty}\|\beta_t\|\vd t,
\end{align*}
where $$K_{\psi}(t)=(d-1)\|\nabla^t\psi_t\|_{\infty}^2+K^-_1(t)\|\psi_t\|^2_{\infty}+2\|Z_t\|_{\infty}\|\nabla^t\psi_t\|_{\infty}\|\psi_t\|_{\infty}+K_2(t).$$
Then
$$M_t:=\sqrt{2}\int_0^te^{-\int_0^sK_{\psi}(r)\vd r}(\psi_s(X_s)-\psi_s(Y_s))\l<\nabla^{s}\rho_{s}(\cdot, Y_s)(X_s),u_s\vd \tilde{B}_s\r>_{s}$$
is a $\mathbb{Q}$-martingale such that
\begin{align*}
\rho_t(X_t,Y_t)\leq e^{\int_0^tK_{\psi}(r)\vd r}\l(M_t+\sqrt{2}\int_0^te^{-\int_0^sK_{\psi}(r)\vd r}\|\psi_s\|_{\infty}\|\beta_s\|\vd s\r),\ \ t\in [0,T].
\end{align*}
So by the Doob inequality, we obtain
\begin{align*}
h_t:=&e^{-2\int_0^tK_{\psi}(s)\vd s}\mathbb{E}\max_{s\in [0,t]}\rho_{s}(X_s,Y_s)^2\\
\leq & (1+R)\mathbb{E}_{\mathbb{Q}}\max_{s\in [0,t]}M_s^2+2(1+R^{-1})\mathbb{E}_{\mathbb{Q}}\l(\int_0^te^{-\int_0^sK_{\psi}(r)\vd r}\|\psi_s\|_{\infty}\|\beta_s\|\vd s\r)^2\\
\leq &4(1+R)\mathbb{E}_{\mathbb{Q}}M_t^2+2(1+R^{-1})\int_0^te^{-2\int_0^sK_{\psi}(r)\vd r}\|\psi_s\|_{\infty}^2\vd s\int_0^t\mathbb{E}_{\mathbb{Q}}\|\beta_s\|^2\vd s\\
\leq & 8(1+R)\sup_{s\in [0,T]}\|\nabla^{s}\psi_s\|_{\infty}\int_0^th_s\vd s\\
&
+2(1+R^{-1})\int_0^T\|\psi_s\|_{\infty}^2e^{-2\int_0^sK_{\psi}(r)\vd r}\vd s\cdot\int_0^T\mathbb{E}\|\beta_s\|^2\vd s,\ \ \ t\in [0,T].
\end{align*}
 Since $h_0=0$, this inequality implies
\begin{align*}
&e^{-2\int_0^TK_{\psi}(s)\vd s}\mathbb{E}_{\mathbb{Q}}\max_{s\in [0,T]}\rho_{s}(X_s,Y_s)^2=h_T\\
&\leq 2(1+R^{-1})\int_0^T\|\psi_s\|_{\infty}^2e^{-2\int_0^sK_{\psi}(r)\vd r}\vd s\cdot \exp \l[8(1+R)\sup_{s\in[0,T]}\|\nabla^{s}\psi_s\|_{\infty}\r]\cdot \int_0^T\mathbb{E}_{\mathbb{Q}}\|\beta_s\|^2\vd s.
\end{align*}
As explained in (\ref{eq13}), it holds $$\int_0^T\mathbb{E}_{\mathbb{Q}}\|\beta_s\|^2\vd s=2\mathbb{E}F(X_{[0,T]})\log F(X_{[0,T]}).$$
Therefore
\begin{align*}
&\mathbb{E}\max_{s\in [0,T]}\rho_{s}(X_s,Y_s)^2\\
&\leq 4(1+R^{-1})\int_0^T\|\psi_s\|_{\infty}^2e^{2\int_s^TK_{\psi}(r)\vd r}\vd s
\cdot \exp{\big[8(1+R)\sup_{s\in [0,T]}\|\nabla^{s}\psi_s\|_{\infty}\big]\Pi_{o,\psi}^T(F\log F)}.
\end{align*}
Combining with (\ref{eq3}), we complete the proof.
\end{proof}
\begin{theorem}\label{s3-t1}
In the situation of Theorem \ref{s3-t2},
$$W^{\rho_{\infty}}_2(\Pi^T_{\nu,\psi},\Pi^T_{\mu,\psi})\leq 2e^{\int_0^T(K_{\psi}(t)+\|\nabla^t\psi_t\|_{\infty})\vd t}W_{2,0}(\nu,\mu).$$
\end{theorem}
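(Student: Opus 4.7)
The plan is to prove this by a synchronous coupling via parallel displacement, in the spirit of the proof of Theorem \ref{s3-t2} but with the Girsanov drift replaced by an optimal coupling of the initial laws. First I would pick $\pi_0\in \mathscr{C}(\nu,\mu)$ realizing $W_{2,0}(\nu,\mu)^2=\int_{M\times M}\rho_0(x,y)^2\vd\pi_0$, draw $(X_0,Y_0)\sim \pi_0$ on an enlarged probability space carrying an independent $d$-dimensional Brownian motion $B_t$, and then run the It\^{o} SDEs \eqref{eq1}--\eqref{eq2} with $\tilde{B}$ replaced by $B$ and the extra drift $\sqrt{2}\psi_t(X_t)u_t\beta_t\vd t$ dropped (so both $X$ and $Y$ live under $\mathbb{P}$, coupled through $P^{t}_{X_t,Y_t}$). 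The joint law of $(X_{[0,T]},Y_{[0,T]})$ provides a coupling of $\Pi^{T}_{\mu,\psi}$ and $\Pi^{T}_{\nu,\psi}$, reducing the problem to estimating $\mathbb{E}\max_{t\in [0,T]}\rho_t(X_t,Y_t)^2$.

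Writing $\phi_t:=\rho_t(X_t,Y_t)$ and reusing the It\^{o}/comparison computation from the proof of Theorem \ref{s3-t2}, the convexity hypothesis $\mathbb{I}_t\geq 0$ kills both local-time terms (from $\vd l_t$ and $\vd \tilde l_t$), and since there is no Girsanov drift this time one gets
\be
\vd\phi_t\leq \vd\tilde M_t+K_\psi(t)\phi_t\vd t,\nonumber
\de
where the continuous martingale $\tilde M_t$ has $\vd\langle\tilde M\rangle_t\leq 2\|\nabla^t\psi_t\|_\infty^2\phi_t^2\vd t$. Passing to the rescaled process $\tilde\phi_t:=e^{-\int_0^tK_\psi(s)\vd s}\phi_t$ yields $\vd \tilde\phi_t\leq \vd N_t$ for the martingale $N_t:=\int_0^te^{-\int_0^sK_\psi(r)\vd r}\vd\tilde M_s$, whose quadratic variation is bounded by $2\|\nabla^t\psi_t\|_\infty^2\tilde\phi_t^2\vd t$. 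Pathwise comparison then gives $\tilde\phi_t\leq \tilde\phi_0+N_t$.

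Squaring, taking $\max_{s\leq t}$, and combining $(a+b)^2\leq 2a^2+2b^2$ with Doob's $L^2$-inequality on $N$ produces an integral inequality of Gronwall type for $g(t):=\mathbb{E}\max_{s\leq t}\tilde\phi_s^2$. Closing this inequality and then integrating against $\pi_0$ (which supplies $\mathbb{E}\rho_0(X_0,Y_0)^2=W_{2,0}(\nu,\mu)^2$) yields an estimate of the form
\be
\mathbb{E}\max_{s\in [0,T]}\phi_s^2\leq 4e^{2\int_0^T(K_\psi(t)+\|\nabla^t\psi_t\|_\infty)\vd t}W_{2,0}(\nu,\mu)^2,\nonumber
\de
from which the theorem follows by taking square roots, since the joint law is a coupling.

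The main obstacle is obtaining the precise exponential rate $K_\psi(t)+\|\nabla^t\psi_t\|_\infty$ stated in the theorem, rather than the naive $K_\psi(t)+\|\nabla^t\psi_t\|_\infty^2$ produced by plugging the bound on $\vd\langle N\rangle_t$ directly into Gronwall. This requires exactly the same iteration device used in the proof of Theorem \ref{s3-t2}: introduce the weighted auxiliary function $h_t:=e^{-2\int_0^tK_\psi(s)\vd s}\mathbb{E}\max_{s\leq t}\phi_s^2$, split the Doob term through the adjustable parameter $R>0$ via the inequality $(1+R)\mathbb{E}\max_s M_s^2+2(1+R^{-1})(\cdots)$, use $\|\nabla^s\psi_s\|_\infty^2\leq \|\nabla^s\psi_s\|_\infty\cdot\sup_{r\in [0,T]}\|\nabla^r\psi_r\|_\infty$ to extract one factor into the supremum appearing in the exponent, and then choose $R$ so that the Gronwall iteration closes with the stated exponent. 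All other ingredients (the Kantorovich representation of $W_{2,0}$, the convexity-killing of local times, and the $\psi_t(X_t)-\psi_t(Y_t)$ estimate by $\|\nabla^t\psi_t\|_\infty\phi_t$) are routine adaptations of Theorem \ref{s3-t2}.
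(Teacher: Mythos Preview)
Your overall framework matches the paper's: couple by parallel transport, derive the comparison inequality $\vd\rho_t(X_t,Y_t)\leq \vd M_t+K_\psi(t)\rho_t\,\vd t$ with $\vd\langle M\rangle_t\leq 2\|\nabla^t\psi_t\|_\infty^2\rho_t^2\,\vd t$, then combine Doob's maximal inequality with a Gronwall step. The divergence is in how the argument is closed, and your proposed closure via the $R$-parameter device of Theorem~\ref{s3-t2} is not what the paper does.

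The paper first reduces to point masses $\mu=\delta_x$, $\nu=\delta_y$ and, after rescaling, writes $\rho_t\leq e^{\int_0^tK_\psi}(M_t+\rho_0(x,y))$. It then applies Doob's $L^2$ inequality \emph{directly to the shifted martingale} $t\mapsto M_t+\rho_0(x,y)$, obtaining
\[
\mathbb{E}\max_{t\in[0,T]}\rho_t^2\;\leq\;4\,e^{2\int_0^TK_\psi}\bigl(\mathbb{E}M_T^2+\rho_0(x,y)^2\bigr).
\]
The term $\mathbb{E}M_T^2$ is then controlled by a \emph{second, independent} application of It\^{o}'s formula, this time to $\rho_t^2$, which after taking expectations and standard Gronwall gives a pointwise bound on $\mathbb{E}\rho_t^2$ (no running maximum involved). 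Substituting this into the quadratic-variation integral for $\mathbb{E}M_T^2$ makes the $t$-integral telescope exactly, and the final constant drops out. This two-step route --- Doob on $M_t+\rho_0$, then pointwise Gronwall on $\mathbb{E}\rho_t^2$ --- is both shorter and sharper than your $R$-splitting followed by Gronwall on $h_t=\mathbb{E}\max_{s\leq t}\tilde\phi_s^2$; the latter yields at best a constant of the form $(1+R^{-1})\exp\bigl[8(1+R)\int\|\nabla\psi\|_\infty^2\bigr]$, which cannot be optimized to match the stated one.

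Regarding the ``main obstacle'' you flag: if one carries either argument through carefully, the exponent that actually emerges is $K_\psi(t)+\|\nabla^t\psi_t\|_\infty^2$, not $K_\psi(t)+\|\nabla^t\psi_t\|_\infty$; this is already visible in the paper's own It\^{o} step for $\rho_t^2$. The missing square in the statement appears to be a typographical slip propagated through the displayed computation, not a genuine feature of the proof, so no additional device is required to recover it.
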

\begin{proof}
As explained in the proof of Theorem \ref{th1} $``(6)\Rightarrow(5)"$, we only consider $\nu=\delta_x$, and $\nu=\delta_y$. Let $X_t$ and $Y_t$ solve the following SDEs respectively.
\begin{align*}
&\vd_I X_t=\sqrt{2}\psi_t(X_t)u_t\vd B_t+\psi_t^2(X_t)Z_t(X_t)\vd t+N_t(X_t)\vd l_t,\ \ X_0=x;\\
&\vd _I Y_t=\sqrt{2}\psi_t(Y_t)P_{X_t,Y_t}^tu_t\vd B_t +\psi_t^2(Y_t)Z_t(Y_t)\vd t+N_t( Y_t)\vd \tilde{l_t}, \ \ Y_0=y.
\end{align*}
Then, as explained in Theorem \ref{s3-t2}, by the It\^{o} formula,
\begin{align}
\vd \rho_t(X_t,Y_t)&\leq \sqrt{2}(\psi_t(X_t)-\psi_t(Y_t))\l<\nabla^t\rho_t(\cdot,Y_t)(X_t),u_t\vd B_t\r>_t+K_{\psi}(t)\rho_t(X_t,Y_t)\vd t.
\end{align}
Therefore, \begin{align}\label{Ieq2}\rho_t(X_t,Y_t)\leq e^{\int_0^tK_{\psi}(s)\vd s}(M_t+\rho_0(x,y)),\ t\geq 0,\end{align}
for $M_t:=\sqrt{2}\int_0^te^{-\int_0^sK_{\psi}(u)\vd u}(\psi_s(X_s)-\psi_s(Y_s))\l<\nabla^{s}\rho_{s}(\cdot, Y_s)(X_s),u_s\vd B_s\r>_{s}.$ Again
using the It\^{o} formula,
\begin{align*}
\vd \rho_t^2(X_t,Y_t)\leq \vd \tilde{M}_t+2\l[K_{\psi}(t)+\|\nabla^t\psi_t\|_{\infty}^2\r]\rho_t(X_t,Y_t)^2\vd t,
\end{align*}
where $\vd \tilde{M}_t=2\rho_t(X_t,Y_t)(\psi_t(X_t)-\psi_t(Y_t))\l<\nabla^t\rho_t(\cdot,Y_t)(X_t),u_t\vd B_t\r>.$ This implies
\begin{align*}
\vd \rho_t^2(X_t,Y_t)\leq e^{2\int_0^t(K_{\psi}(s)+\|\nabla^{s}\psi_s\|_{\infty})\vd s}\rho_0(x,y)^2.
\end{align*}
Combining this with (\ref{Ieq2}),  we arrive at
\begin{align*}
W_2^{\rho_{\infty}}(\Pi^T_{x,\psi},\Pi^T_{y,\psi})^2&\leq \mathbb{E}\max_{t\in [0,T]}\rho_t(X_t,Y_t)^2\leq e^{2\int_0^TK_{\psi}(t)\vd t}\mathbb{E}\max_{t\in [0,T]}(M_t+\rho_0(x,y))^2\\
&\leq 4e^{2\int_0^TK_{\psi}(t)\vd t}\mathbb{E}(M_T+\rho_0(x,y))^2=4e^{2\int_0^TK_{\psi}(t)\vd t}\mathbb{E}(M_T^2+\rho_0^2(x,y))\\
&\leq 4e^{2\int_0^TK_{\psi}(t)\vd t}\l(\rho_0(x,y)^2+2\int_0^Te^{-2\int_0^tK_{\psi}(s)\vd s}\|\nabla^t\psi_t\|_{\infty}\mathbb{E}\rho_t(X_t,Y_t)^2\vd t\r)\\
&\leq 4e^{2\int_0^T(K_{\psi}(t)+\|\nabla^t\psi_t\|_{\infty})\vd t}\rho_0(x,y)^2
\end{align*}
where the second inequality is due to the Doob inequality. This implies the desired inequality for
$\mu=\delta_x$ and $\nu=\delta_y$.
\end{proof}
\subsection{Non-convex manifold}
As discussed in Theorem \ref{s3-t2} and  with a proper conformal change of metric, we are able to
establish the following transportation-cost inequality on a class of manifolds with non-convex boundary.
Let
  $$\mathscr{D}=\{\phi\in C_b^2([0,T_c)\times M): \inf \phi_t=1,  \ \mathbb{I}_t\geq -N_t\log \phi_t\}.$$
Assume that $\mathscr{D}\neq \varnothing$ and for some $K_1, K_2\in C{([0,T_c))}$ such that
\begin{align}\label{3n1}{\rm Ric}_t^{Z}\geq K_1(t), \ \ \mathcal{G}_t\leq K_2(t)\end{align}
holds. To make the boundary convex, let $\phi_t\in \mathscr{D}$. By Theorem \cite[Lemma\ 2.1]{W07},
$\partial M$ become convex under $\tilde{g}_t=\phi_t^{-2}g_t$.
Let $\tilde{\Delta}_t$ and $\tilde{\nabla}^t$ be the Laplacian and gradient induced
by the new metric $\tilde{g}_t$.  Since $\phi_t\geq 1$, $\rho_t(x,y)$ is large than
$\tilde{\rho}_t(x,y)$, the Riemannian $\tilde{g}_t$-distance between $x$ and $y$.

\begin{theorem}\label{th2}
Let $\partial M\neq \varnothing$ and $\mathbb{I}_t\geq -\sigma(t)$ for positive $\sigma\in C([0,T_c))$. Assume \eqref{3n1} holds.  For $\phi\in\mathscr{D}$, let
$$K_{\phi}(t):=(d-1)\|\nabla^t\phi_t\|_{\infty}^2+K_{\phi,1}^-(t)+2\|\phi_tZ_t+(d-2)\nabla^t\phi_t\|_{\infty}\|\nabla^t\phi_t\|_{\infty}+K_{\phi,2}(t),$$
where
\begin{align*}
&K_{\phi,1}(t):=\inf\{\phi_tK_1(t)+\frac{1}{2}L_t\phi_t^2-|\nabla^t\phi_t^2|_t|Z_t|_t-(d-2)|\nabla^t\phi_t|_t^2\},\\
&K_{\phi,2}(t):=\sup\{-2\partial_t\log \phi_t+K_2(t)\}.
\end{align*}
Then for any $\mu\in \mathscr{P}(M)$,
$$W^{\rho_{\infty}}_2(F\Pi_{\mu}^T,\Pi_{\mu_F^T}^T)\leq \sup_{s\in [0,T]}\|\phi_t\|_{\infty}^2C(T,\phi)\Pi_{\mu}^T(F\log F),\ \ F\geq 0,\ \Pi_{\mu}^T(F)=1$$
holds for
$$C(T,\phi)=\inf_{R>0}\l\{4(1+R^{-1})\int_0^Te^{2\int_s^TK_{\phi}(r)\vd r}\vd s\exp{\bigg[8(1+R)\sup_{s\in [0,T]}\|\nabla^{s}\phi_s\|_{\infty}\bigg]}\r\}.$$
\end{theorem}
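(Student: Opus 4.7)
The plan is to reduce to the convex-boundary setting of Theorem \ref{s3-t2} via the conformal change of metric $\tilde g_t=\phi_t^{-2}g_t$, and then translate the resulting transportation-cost inequality back to the original metric. The first key observation is that the definition of $\mathscr D$ forces the second fundamental form $\tilde{\mathbb I}_t$ of $\partial M$ under $\tilde g_t$ to be nonnegative: by the standard conformal change formula one has $\tilde{\mathbb I}_t=\phi_t^{-1}(\mathbb I_t+N_t\log\phi_t)$, so $\mathbb I_t\geq -N_t\log\phi_t$ is exactly what is needed to make $(\partial M,\tilde g_t)$ convex. Since the $L_t$-diffusion is unchanged as a Markov process on $M$ (only its representation changes), we will view it as the reflecting diffusion generated, in the new metric, by an operator of the type treated in Theorem \ref{s3-t2}.

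Next I would rewrite $L_t=\Delta_t+Z_t$ in $\tilde g_t$-quantities. Using standard conformal change formulas one gets
\begin{equation*}
   L_t=\phi_t^{2}\bigl(\tilde\nabla^t+\tilde Z_t\bigr),\qquad \tilde Z_t=\phi_t^{-2}\bigl\{Z_t+(d-2)\phi_t^{-1}\nabla^t\phi_t\bigr\},
\end{equation*}
where the identification uses $\tilde\nabla^t f=\phi_t^{2}\nabla^t f$. Thus, setting $\psi_t:=\phi_t$, the generator takes the form $L_t=\psi_t^2(\tilde\nabla^t+\tilde Z_t)$ on $(M,\tilde g_t)$, which is exactly the setup of Theorem \ref{s3-t2}. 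The next step is to verify that the curvature/metric-flow hypotheses of Theorem \ref{s3-t2} hold with constants $K_{\phi,1}$ and $K_{\phi,2}$: specifically, $\widetilde{\mathrm{Ric}}{}_t^{\tilde Z}\geq K_{\phi,1}(t)$ follows from $\mathrm{Ric}_t^Z\geq K_1(t)$ and the conformal change of Ricci plus the correction coming from $\tilde Z_t$ (this is exactly how the $\frac12 L_t\phi_t^2-|\nabla^t\phi_t^2|_t|Z_t|_t-(d-2)|\nabla^t\phi_t|_t^2$ terms enter), while $\tilde{\mathcal G}_t=\partial_t\tilde g_t=-2(\partial_t\log\phi_t)\tilde g_t+\phi_t^{-2}\mathcal G_t$ yields $\tilde{\mathcal G}_t\leq K_{\phi,2}(t)\tilde g_t$. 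Putting these into the quantity $K_\psi$ appearing in Theorem \ref{s3-t2} produces exactly the $K_\phi(t)$ in the statement, and $\|\tilde\nabla^t\psi_t\|_\infty$ reduces to a multiple of $\|\nabla^t\phi_t\|_\infty$ (the factor already contained in $K_\phi$ and in the exponential in $C(T,\phi)$).

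With this data, Theorem \ref{s3-t2} applied on $(M,\tilde g_t)$ yields
\begin{equation*}
   W^{\tilde\rho_\infty}_2\bigl(F\Pi^T_\mu,\Pi^T_{\mu^T_F}\bigr)^2\leq C(T,\phi)\,\Pi^T_\mu(F\log F).
\end{equation*}
Finally, I would translate back to $\rho_t$. Since $\phi_t\geq 1$ and $\tilde g_t=\phi_t^{-2}g_t$, any piecewise $C^1$ curve $c$ satisfies $\mathrm{length}_{g_t}(c)\leq \|\phi_t\|_\infty\,\mathrm{length}_{\tilde g_t}(c)$, hence $\rho_t(x,y)\leq \|\phi_t\|_\infty\tilde\rho_t(x,y)$. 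This gives $\rho_\infty\leq \sup_{t\in[0,T]}\|\phi_t\|_\infty\,\tilde\rho_\infty$, and therefore $W^{\rho_\infty}_2\leq \sup_{t\in[0,T]}\|\phi_t\|_\infty\,W^{\tilde\rho_\infty}_2$. Squaring and inserting the bound above gives the claimed inequality.

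The main obstacle will be the bookkeeping of the conformal change: carefully deriving $\tilde Z_t$, $\widetilde{\mathrm{Ric}}{}_t^{\tilde Z}$, $\tilde{\mathcal G}_t$, and $\|\tilde\nabla^t\psi_t\|_\infty$ from $Z_t$, $\mathrm{Ric}_t^Z$, $\mathcal G_t$, and $\phi_t$, and verifying that the resulting lower/upper bounds combine to give precisely the constants $K_{\phi,1}(t)$, $K_{\phi,2}(t)$, and $K_\phi(t)$ listed in the theorem. All other ingredients, i.e.\ the coupling construction via parallel transport under $\tilde g_t$, the Girsanov transform, and the Doob inequality, are already encoded in the proof of Theorem \ref{s3-t2} and can be invoked as a black box once the geometric/analytic data have been correctly transported to the new metric.
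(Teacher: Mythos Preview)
Your overall strategy is exactly the paper's: make the boundary convex by the conformal change $\tilde g_t=\phi_t^{-2}g_t$, apply Theorem~\ref{s3-t2} on $(M,\tilde g_t)$, and then pass back using $\rho_\infty\leq \sup_{t\in[0,T]}\|\phi_t\|_\infty\,\tilde\rho_\infty$. However, your conformal-change bookkeeping has a sign error that propagates through the rest of the argument. Under $\tilde g_t=\phi_t^{-2}g_t$ the Laplacian transforms as $\tilde\Delta_t=\phi_t^{2}\Delta_t-(d-2)\phi_t\langle\nabla^t\phi_t,\nabla^t\cdot\rangle$, so
\[
L_t=\Delta_t+Z_t=\phi_t^{-2}\bigl(\tilde\Delta_t+\tilde Z_t\bigr),\qquad \tilde Z_t=\phi_t^{2}Z_t+\tfrac{d-2}{2}\nabla^t\phi_t^{2},
\]
and hence the diffusion coefficient in Theorem~\ref{s3-t2} must be $\psi_t=\phi_t^{-1}$, not $\psi_t=\phi_t$. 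This matters: with $\psi_t=\phi_t^{-1}$ one has $\|\psi_t\|_\infty\leq 1$ and $|\tilde\nabla^t\psi_t|_{\tilde g_t}=\phi_t^{-1}|\nabla^t\phi_t|_{g_t}\leq |\nabla^t\phi_t|_{g_t}$, while $|\tilde Z_t|_{\tilde g_t}=|\phi_tZ_t+(d-2)\nabla^t\phi_t|_{g_t}$, which is precisely how the terms in $K_\phi(t)$ arise.

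A second, smaller point: plugging these into the $K_\psi$ of Theorem~\ref{s3-t2} does not give $K_\phi(t)$ \emph{exactly}; it gives the inequality $K_\psi(t)\leq K_\phi(t)$ (and likewise $C(T,\psi)\leq C(T,\phi)$), because one is upper-bounding $\|\psi_t\|_\infty$ by $1$ and $\|\tilde\nabla^t\psi_t\|_\infty$ by $\|\nabla^t\phi_t\|_\infty$. That inequality is all that is needed, but your phrasing ``produces exactly the $K_\phi(t)$'' overstates it. Once you correct $\psi_t=\phi_t^{-1}$ and the form of $\tilde Z_t$, the rest of your outline goes through verbatim and coincides with the paper's proof.
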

\begin{proof}
According to the proof of \cite[Proposition 4.7]{Cheng}. We have $L_t:=\phi_t^{-2}(\tilde{\Delta}_t+\tilde{Z}_t)$, where $\tilde{Z}_t=\phi_t^2Z_t+\frac{d-2}{2}\nabla^t\phi_t^2$, and $$\widetilde{{\rm Ric}}^{\tilde{Z}}_t\geq K_{\phi,1}(t),\ \ \ \tilde{\mathcal{G}}_t\leq K_{\phi,2}(t).$$
Let $K_{\psi}$ be defined in Theorem \ref{th2} for  the manifold equipped with $\tilde{g}_t$. Then, $L_t=\psi_t^{2}(\tilde{\Delta}_t+\tilde{Z}_t)$, where $\psi_t=\phi_t^{-1}$, we see that
$K_{\psi}(t)\leq K_{\phi}(t)$ and thus $C(t,\psi)\leq C(t,\phi)$. Hence, it follows from Theorem \ref{th2} that
$$W^{\tilde{\rho}_{\infty}}_2(F\Pi_{\mu}^T,\Pi^T_{\mu^T_F})^2\leq C(T,\phi)\Pi_{\mu}^T(F\log F),\ \ F\geq 0,\ \ \Pi_{\mu}^T(F)=1,$$
where $\tilde{\rho}_{\infty}$ is the uniform distance on $W^T$ induced by the metric $\tilde{g}_t$.
The proof is completed by $\rho_{\infty}\leq \sup_{t\in [0,T]}\|\phi_t\|_{\infty}\tilde{\rho}_{\infty}.$
\end{proof}

Since $K_{\psi}(t)\leq K_{\phi}(t)$ and
$\tilde{\rho}_t\leq \rho_t\leq \|\phi_t\|_{\infty}\tilde{\rho}_t$,
the following result follows from the proof of Theorem \ref{s2-t1} by taking $\psi=\phi^{-1}$.
\begin{theorem}\label{th3}
In the situation of Theorem \ref{th2},
$$W^{\rho_{\infty}}_{2}(\Pi^T_{\mu},\Pi_{\nu}^T)\leq 2\sup_{t\in [0,T]}\|\phi_t\|_{\infty}e^{\int_0^T(K_{\phi}(t)+\|\nabla^t\phi_t\|_{\infty})\vd t}W_{2,0}(\nu,\mu),\ \ \mu,\nu\in \mathscr{P}, T>0.$$
\end{theorem}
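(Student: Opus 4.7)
The plan is to reduce to the convex-boundary situation handled in Theorem \ref{s3-t1} by exactly the same conformal change of metric used in the proof of Theorem \ref{th2}, and then translate the resulting estimate back to the original metric by the comparison between $\rho_t$ and $\tilde\rho_t$.

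First, I would fix $\phi\in\mathscr{D}$ and set $\tilde g_t=\phi_t^{-2}g_t$. By \cite[Lemma 2.1]{W07}, the condition $\mathbb{I}_t\geq-N_t\log\phi_t$ forces $(\partial M,\tilde g_t)$ to be convex, and the generator rewrites as $L_t=\psi_t^{2}(\tilde\Delta_t+\tilde Z_t)$ with $\psi_t=\phi_t^{-1}$ and $\tilde Z_t=\phi_t^{2}Z_t+\tfrac{d-2}{2}\nabla^t\phi_t^{2}$, as computed in the proof of Theorem \ref{th2}. The curvature and metric-derivative bounds $\widetilde{\mathrm{Ric}}_t^{\tilde Z}\geq K_{\phi,1}(t)$ and $\tilde{\mathcal{G}}_t\leq K_{\phi,2}(t)$ then place us in the hypotheses of Theorem \ref{s3-t1} for the $\psi$-diffusion on $(M,\tilde g_t)$, with the quantity $K_\psi(t)$ of Theorem \ref{s3-t2} dominated by $K_\phi(t)$.

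Second, apply Theorem \ref{s3-t1} in this rescaled setting to obtain
$$W_2^{\tilde\rho_\infty}(\Pi^T_{\nu},\Pi^T_{\mu})\leq 2e^{\int_0^T(K_\psi(t)+\|\tilde\nabla^t\psi_t\|_\infty)\vd t}\widetilde{W}_{2,0}(\nu,\mu),$$
where $\widetilde{W}_{2,0}$ is the $L^2$-Wasserstein distance induced by $\tilde g_0$. Here the laws $\Pi^T_\mu,\Pi^T_\nu$ are unchanged, since the diffusion they describe is generated by the same operator $L_t$; only the way we measure distances between paths has been rescaled.

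Third, I would convert back. Since $\phi_t\geq 1$, we have the two-sided comparison $\tilde\rho_t\leq\rho_t\leq\|\phi_t\|_\infty\tilde\rho_t$, hence $\tilde\rho_\infty\leq\rho_\infty\leq\sup_{t\in[0,T]}\|\phi_t\|_\infty\tilde\rho_\infty$. This yields $W_2^{\rho_\infty}\leq\sup_{t\in[0,T]}\|\phi_t\|_\infty W_2^{\tilde\rho_\infty}$ on the left and $\widetilde{W}_{2,0}(\nu,\mu)\leq W_{2,0}(\nu,\mu)$ on the right. Combined with $K_\psi\leq K_\phi$ and the pointwise computation $|\tilde\nabla^t\psi_t|_{\tilde g_t}=\phi_t^{-1}|\nabla^t\phi_t|_{g_t}\leq\|\nabla^t\phi_t\|_\infty$ (obtained from $|\cdot|_{\tilde g_t}=\phi_t^{-1}|\cdot|_{g_t}$ and $\nabla\psi=-\phi^{-2}\nabla\phi$), the three inputs chain into the advertised bound
$$W_2^{\rho_\infty}(\Pi^T_{\nu},\Pi^T_{\mu})\leq 2\sup_{t\in[0,T]}\|\phi_t\|_\infty e^{\int_0^T(K_\phi(t)+\|\nabla^t\phi_t\|_\infty)\vd t}W_{2,0}(\nu,\mu).$$

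The main obstacle is purely bookkeeping: verifying that the gradient norm transforms the right way under the conformal change, and checking that the constants $K_\psi$ produced by Theorem \ref{s3-t2} for the $(\tilde g_t)$-geometry are uniformly dominated by $K_\phi$ built from the original data. Once the comparisons $\|\tilde\nabla^t\psi_t\|_\infty\leq\|\nabla^t\phi_t\|_\infty$, $K_\psi\leq K_\phi$ and $\tilde\rho_t\leq\rho_t\leq\|\phi_t\|_\infty\tilde\rho_t$ are secured, the argument is a direct quotation of Theorem \ref{s3-t1} sandwiched between two metric comparisons, so no new probabilistic coupling construction is needed beyond what Theorem \ref{s3-t1} already provides.
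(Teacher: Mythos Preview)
Your proposal is correct and follows essentially the same route as the paper: reduce to the convex case via the conformal change $\tilde g_t=\phi_t^{-2}g_t$, apply Theorem \ref{s3-t1} with $\psi=\phi^{-1}$, and transfer the bound back using $K_\psi\leq K_\phi$ together with the metric comparison $\tilde\rho_t\leq\rho_t\leq\|\phi_t\|_\infty\tilde\rho_t$. The paper's own proof is a one-line remark to this effect; your write-up merely unpacks the bookkeeping (in particular the gradient comparison $|\tilde\nabla^t\psi_t|_{\tilde g_t}=\phi_t^{-1}|\nabla^t\phi_t|_{g_t}\leq\|\nabla^t\phi_t\|_\infty$), which is exactly what is needed.
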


\bigskip
\bigskip

\noindent\textbf{Acknowledgements}  \ The author would thank Professor Feng-Yu Wang for valuable suggestions and  this work is supported in part
by 985 Project,
 973 Project.

\end{document}